\documentclass[psamsfonts]{amsart}

%-------Packages---------
\usepackage{amssymb,amsfonts}
\usepackage{tikz-cd} 
\usepackage[all,arc]{xy}
\usepackage{enumerate}
\usepackage{mathrsfs}
\usepackage{hyperref}
\hypersetup{
pdftitle={Generalised Temperley-Lieb algebras of type $G(r,1,n)$},
pdfsubject={Mathematics},
pdfauthor={Gus Lehrer AND Mengfan Lyu},
pdfkeywords={Temperley-Lieb algebra, Hecke algebras,cellular basis, decomposition numbers}}
\usepackage{bm}
\usepackage{pgfplots}
\usepackage{cite}

%--------Theorem Environments--------
%theoremstyle{plain} --- default
\newtheorem{thm}{Theorem}[section]
\newtheorem{cor}[thm]{Corollary}

\newtheorem{lem}[thm]{Lemma}

\theoremstyle{definition}
\newtheorem{defn}[thm]{Definition}
\newtheorem{exmp}[thm]{Example}

\theoremstyle{remark}

\makeatletter
\let\c@equation\c@thm
\makeatother
\numberwithin{equation}{section}

%--------Meta Data: Fill in your info------
\title{Generalised Temperley-Lieb algebras of type $G(r,1,n)$}

\author{Gus Lehrer and Mengfan Lyu}
\address{School of Mathematics and Statistics,
University of Sydney, N.S.W. 2006, Australia}
\email{gustav.lehrer@sydney.edu.au,fans4739@gmail.com }

\begin{document}

\begin{abstract}
In this paper, we define a quotient of the cyclotomic Hecke algebra of type $G(r,1,n)$ as a generalisation of the Temperley-Lieb algebras of type $A$ and $B$. We establish a graded cellular structure for the generalised Temperley-Lieb algebra and, using the technology of $KLR$ algebras, determine the corresponding decomposition matrix.
\end{abstract}
%\\\\
\keywords{Temperley-Lieb algebra, Hecke algebras, KLR algebras, cellular basis, decomposition numbers}%\\\\
\subjclass[2020]{Primary 16G20, 20C08; Secondary 20G42}
%\noindent \textbf{2010 Math. Subject Class.} 46G05, 30G25, 39-XX\\
%\noindent \textbf{Personal website:} \ \href{https://goo.gl/P9Ttmu}{kolosovpetro.github.io}\\
%\noindent \textbf{ORCID:} \ \href{https://goo.gl/qqxSYM}{0000-0002-6544-8880}\\
%\noindent \textbf{e-mail:} \ \ \ \href{mailto:kolosovp94@gmail.com}{kolosovp94@gmail.com}\\

\maketitle
\tableofcontents

\section{Introduction}
 
\subsection{Temperley-Lieb algebras of types $A_{n-1}$ and $B_n$}
The Temperley-Lieb algebra was originally introduced in \cite{TL1971} in connection with transition matrices in statistical mechanics. Since then, it has been shown to have links with many diverse areas of mathematics, such as operator algebras, quantum groups, categorification and representation theory. It can be defined in several different ways,including as a quotient of the Hecke algebra and as an associative diagram algebra.
Steinberg pointed out in 1982 during a lecture at UCLA that the Temperley-Lieb algebra $TL_n(q)$ is a quotient of the Hecke algebra $H_n(q)$ of type $A_{n-1}$ by the ideal generated by some idempotents corresponding to two non-commuting generators. It plays a pivotal role in the polynomial invariant of knots and links discovered by Jones in \cite{Jones1985}.
In 1990, Kauffman found a diagrammatic presentation for the Temperley-Lieb algebra in \cite{Kauffman1990AnIO}. It is defined as an associative diagram algebra modulo planar isotopy for classical unoriented knots and links.
In \cite{Bernstein99acategorification}, Bernstein, Frenkel and Khovanov "categorify" the Temperley-Lieb algebra, which is the commutant of the quantum enveloping algebra $U_q(\mathfrak{sl}_2)$ in its action on the $n^{th}$ tensor power $V_1^{\bigotimes n}$ of the natural two dimensional representation $V_1$. More precisely, they realise the Temperley-Lieb algebra via the projective and Zuckerman functors in the usual category $\mathcal{O}$ for $\mathfrak{gl}_n$.
Moreover, the Temperley-Lieb algebra also plays an important role in statistical physics models, and readers can find more references in \cite{Batchelor_1991},\cite{SOS} and  \cite{abramsky2009temperleylieb}.

The ``blob algebra'' $B_n(q,Q)$ was introduced in \cite{Martin_1994} by Martin and Saleur in connection with statistical mechanics. It has a similar diagrammatic representation to the original Temperley-Lieb algebra. Algebraically, it can also be identified with a quotient of $HB_n(q,Q)$ the two-parameter Hecke algebra of type $B_n$. For these reasons, the blob algebra is considered as a type-$B_n$ analogue of $TL_n(q)$ and is also denoted by $TLB_n(q,Q)$. 
In analogy to the original Temperley-Lieb algebra, $TLB_n(q,Q)$ also fits into a category. In \cite{IoharaLehrerZhang2021}, Iohara, Lehrer and Zhang construct a family of equivalences between the Temperley-Lieb category of type B and a certain category of infinite dimensional representations of $U_q(\mathfrak{sl}_2)$ as a generalization of the Temperley-Lieb category $TL(q)$.

Cellularity is a property of associative algebras which permits their deformation and enables the computation of their decomposition numbers. The above algebras are shown to be cellular in \cite{Lehrer1996}. Using this result, the standard modules and decomposition numbers are well-known, see \cite{MARTIN2000957},\cite{CoxGrahamMartin2003} and \cite{GRAHAM2003479}.

\subsection{Objectives of this paper}
In this paper, we define a generalised Temperley-Lieb algebra $TL_{r,1,n}$ in Definition \ref{TL} corresponding to the imprimitive unitary reflection group $G(r,1,n)$, which can be regarded as a generalisation of the Coxeter groups of type $A_{n-1}$ and $B_n$. This generalisation strengthens the connection between Temperley-Lieb algebras and cyclotomic Hecke algebras and brings potential innovations in knot theory, quantum enveloping algebras and statistical physics models. We also construct a cellular structure of our generalised Temperley-Lieb algebra $TL_{r,1,n}$ in Theorem \ref{cb33}, which provides clues for a potential diagrammatic realisation. Using this cellular basis, we give a condition equivalent to the semisimplicity of $TL_{r,1,n}$ in Theorem \ref{Th35}. Moreover, we determine the associative decomposition matrix in Theorem \ref{decomtlr1n}.

Our generalisation is inspired by the construction of generalised blob algebras by Martin and Woodcock. They show in \cite{martin_woodcock_2003} that $TLB_n(q,Q)$ can be obtained as the quotient of $HB_n(q,Q)$ by the idempotents corresponding to the irreducible modules associated with the bipartitions $((2),\emptyset)$ and $(\emptyset,(2))$ and generalised this construction to all cyclotomic Hecke algebras $H(r,1,n)$.

Following this method, we define in Definition \ref{TL} the generalised Temperley-Lieb algebra $TL_{r,1,n}$ as a quotient of the corresponding cyclotomic Hecke algebra $H(r,1,n)$ by the two-sided ideal generated by certain idempotents. The Temperley-Lieb algebras of both type $A_{n-1}$ and $B_n$ are special cases of $TL_{r,1,n}$. Therefore, we refer to this quotient of the cyclotomic Hecke algebra $H(r,1,n)$ as the generalised Temperley-Lieb algebra of type $G(r,1,n)$.

\subsection{The content of this work}
As a direct consequence of the definition, our generalised Temperley-Lieb algebra $TL_{r,1,n}$ is a quotient of the generalised blob algebra $B_{r,n}$. We use this property to construct a graded cellular structure on it.

%The cellular structure of a finite-dimensional associative algebra was first introduced by Graham and Lehrer in \cite{Lehrer1996} to study the simple modules of a non-semisimple algebra. In the same paper, they show that both the Hecke algebra $H_n(q)$ and the original Temperley-Lieb algebra $TL_n(q)$ are cellular. As a development, Hu and Mathas introduce graded cellular theory in \cite{hu2010graded}. They show the graded cellularity of the cyclotomic Hecke algebra $H(r,1,n)$ using the isomorphism between the cyclotomic Hecke algebra and the cyclotomic KLR algebra of type A revealed by Brundan and Kleshchev in \cite{Brundan_2009} and Rouquier in \cite{rouquier20082kacmoody}. 

Lobos and Ryom-Hansen \cite{LOBOSMATURANA2020106277} give a graded cellular basis associated with one-column multipartitions of $n$ for the generalised blob algebras $B_{r,n}$. In \cite{bowman2021graded}, Bowman gives the decomposition numbers for the standard modules of the generalised blob algebras $B_{r,n}$. In Lemma \ref{triquo}, we show that the ideal of $B_{r,n}$ corresponding to its quotient $TL_{r,1,n}$ is exactly spanned by the elements corresponding to a downward closed subset of the poset in the cellular basis given by Lobos and Ryom-Hansen. Therefore, a graded cellular structure of $TL_{r,1,n}$ can be obtained by a truncation of that of $B_{r,n}$, which is described in Theorem \ref{cb33}. The graded cellular basis of $TL_{r,1,n}$ is associated with one-column $r$-partitions of $n$ which consist of at most two non-empty components.

Using this graded cellular basis, we investigate the representations of the generalised Temperley-Lieb algebra $TL_{r,1,n}$. Following the calculation of the dimensions of the cell modules, we concentrate on the bilinear form $\phi_{\lambda}(,)$ on each cell module. For a one-column multipartition $\lambda$, let $W(\lambda)$ be the corresponding cell module of $TL_{r,1,n}$. Define a bilinear form $\phi_{\lambda}(,)$ on $ W(\lambda ) $ in the usual way.According to Theorem 3.4 in \cite{Lehrer1996}, $W(\lambda )$ is simple if and only if $rad(\phi_{\lambda})=0$ . Using this method, we obtain a criterion for the semisimplicity of $TL_{r,1,n}$ in Theorem \ref{Th35} and interpret it as a restriction on the parameters in Corollary \ref{coss}. 

As the main result in this paper, we determine the decomposition numbers for the cell modules using this bilinear form. We first give a method to find out the radical of the bilinear form $\phi_{\lambda}(,)$ in section \ref{6.2}.  By showing that this method is not sensitive to the parameter $r$, we demonstrate that the dimension of a simple $TL_{r,1,n}$-module equals that of a corresponding $TLB_n(q,Q)$-module with the parameters chosen properly.
By showing that the cell and simple modules of $TL_{r,1,n}$ can be realised as those of a corresponding Temperley-Lieb algebra of type $B_n$, we transform the decomposition matrix of $TL_{r,1,n}$ into a combination of those of $TLB_n(q,Q)$ in Lemma \ref{lm52}.

The non-graded decomposition numbers of the Temperley-Lieb algebras of type $B_n$ are given by Martin and Woodcock in \cite{martin_woodcock_2003} and Graham and Lehrer in \cite{GRAHAM2003479} and the graded ones are given by Plaza in \cite{PLAZA2013182}. Following their results, we give the decomposition numbers of $TL_{r,1,n}$ in Theorem \ref{decomtlr1n}.

\section{Multipartitions, Cyclotomic Hecke algebras and KLR algebras} \label{nt}
%The representations of the cyclotomic Hecke algebra $H(r,1,n)$ are usually described in the language of multipartitions and tableaux. We will use this language to define our generalized Temperley-Lieb algebra $TL_{r,1,n}$ as well as its cellular structure. 
In this section, we recall some combinatorial concepts such as multipartitions and tableaux in the context of the representation theory of the cyclotomic Hecke algebra as developed by Arike and Koike in \cite{ARIKI1994216}. We will use this language to define our generalized Temperley-Lieb algebra $TL_{r,1,n}$ as a quotient of the cyclotomic Hecke algebra $H(r,1,n)$.
We also recall the cyclotomic KLR algebra $\mathcal{R}_n^{\Lambda}$ of type A, which is isomorphic to $H(r,1,n)$ according to Theorem 1.1 in \cite{Brundan_2009}. We will construct a graded cellular basis of $TL_{r,1,n}$ using the KLR generators in section \ref{gcs}.

\subsection{Multipartitions and their Young tableaux}

A partition of $n$ is a sequence $\sigma = (\sigma_1 \geq \sigma_2 \geq \dots)$ of non-negative integers $\sigma_i$ such that $|\sigma|=\sum_{i\geq 1}\sigma_i=n$. Denote $\sigma = (\sigma_1 , \sigma_2 , \dots, \sigma_k)$ if $\sigma_i=0$ for $i>k$ and $\sigma_k>0$. A multipartition (more specifically, an $r$-partition) of $n$ is an ordered $r$-tuple $\lambda=(\lambda^{(1)},\dots,\lambda^{(r)})$ of partitions with $|\lambda^{(1)}|+\dots+|\lambda^{(r)}|=n$;  if the $i^{th}$ component of $\lambda$ is empty, $|\lambda^{(i)}|=0$. 
Denote by $\mathfrak{P}_n^{(r)}$ the set of $r$-partitions of $n$.

The Young diagram of a multipartition $\lambda\in \mathfrak{P}_n^{(r)}$ is the set of 3-tuples:
\begin{equation}
	[\lambda]:=\{(a,b,l)\;|\; a>0, 1\leq b\leq \lambda_a^{(l)},1\leq l\leq r\}\subset \mathbb{Z}^3.
\end{equation}
For fixed $l$, the subset $[\lambda^{(l)}]:=\{(a,b,l)|a>0, 1\leq b\leq \lambda_a^{(l)}\}$ is the $l^{th}$ component of this Young diagram of the multipartition $\lambda$. It can be regarded as a Young diagram of the partition $\lambda^{(l)}$. If $|\lambda^{(l)}|=0$, we call it an empty component of the multipartition $\lambda$. As there is a unique Young diagram $[\lambda]$ for each multipartition $\lambda\in \mathfrak{P}_n^{(r)}$ and vice versa, we do not distinguish $[\lambda]$ and $\lambda$ in the following sections of this thesis.
For each 3-tuple $(a,b,l)\in [\lambda]$, we call it the node in the $a^{th}$ row and $b^{th}$ column of the $l^{th}$ component. A Young tableau is obtained by filling in the nodes with numbers $1,2,\dots,n$.
More precisely, a Young tableau of shape $\lambda$ is a bijective map:
\begin{equation}
	t:\{1,2,3,\dots,n\}\to [\lambda].
\end{equation}

  For example, let $n=9$, $r=3$ and $\lambda=((4,3),0,(1,1))$. Figure-$\ref{yd}$ shows a Young tableau of shape $\lambda$.

\begin{figure}[h]
	\begin{center}
		\begin{tikzpicture}[scale=1]
			%% draw straight lines where required
			\draw (1,0)--(4,0);\draw (1,1)--(5,1);\draw (1,2)--(5,2);\draw (5,1)--(5,2);
			\foreach \x in {1,2,3,4}
			\draw (\x,2)--(\x,0);
			\draw (7,0)--(8,0); \draw (7,1)--(8,1);\draw (7,2)--(8,2);\draw (7,0)--(7,2);\draw (8,0)--(8,2);
			
			%% fill in the boxes
			\draw node at (1.5,1.5){1};\draw node at (2.5,1.5){2};\draw node at (3.5,1.5){5};\draw node at (4.5,1.5){8};
			\draw node at (1.5,0.5){3};\draw node at (2.5,0.5){4};\draw node at (3.5,0.5){9};
			\draw node at (7.5,0.5){7};\draw node at (7.5,1.5){6};
			\draw node at (6,1){$\emptyset$};
			\draw node at (0.5,2.5){$l$=};\draw node at (3,2.5){1};\draw node at (6,2.5){2};\draw node at (7.5,2.5){3};
		\end{tikzpicture}
		%\centerline{Figure 2}
	\end{center}
	\caption{A standard Young tableau of shape $\lambda$ } \label{yd}
\end{figure}

A standard Young tableau is a Young tableau in which the numbers increase along each row and column in every component. Denote by $Std(\lambda)$ the set of standard tableaux of shape $\lambda$. The tableau in Figure-$\ref{yd}$ is a standard Young tableau of shape $((4,3),\emptyset,(1,1))$.
\subsection{One-column multipartitions and the partial order} \label{1cmltp}
 Next we introduce the definition of one-column multipartitions which are the main tool we use to describe the representations of our generalised Temperley-Lieb algebras. A multipartition $\lambda\in \mathfrak{P}_n^{(r)}$ is called a one-column multipartition if each non-empty component consists of only one column. In other words, $\lambda$ is called a one-column multipartition if $\lambda_a^{(l)}\leq 1$ for all $a>0, 1\leq l\leq r$. Figure $\ref{oct}$ gives an example of the one-column multipartition and  one of its standard tableaux. Denote by $\mathfrak{B}_n^{(r)}$ the set of one-column $r$-partitions of $n$. If $\gamma=(i,1,k)$ and $\gamma'=(i',1,k')$ are two nodes of a one-column multipartition $\lambda$, we say $\gamma <\gamma'$ if either $i<i'$ or $i=i'$ and $k<k'$. We write $\gamma \leq \gamma'$ if $\gamma<\gamma'$ or $\gamma=\gamma'$. For example, let $t$ be the tableau in Figure \ref{oct}, then we have $t(1)<t(2)<t(4)<t(3)<t(5)<t(6)<t(7)<t(8)<t(9)$. In fact, $\leq$ is a total order on the set of nodes $N=\{(a,1,l)|a>0, 1\leq l\leq r\}$.
 
 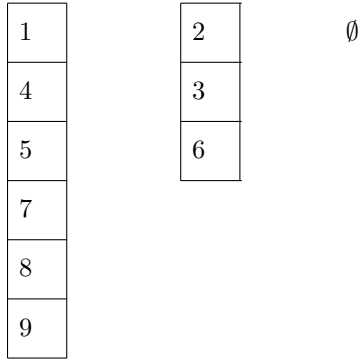
\begin{figure}[h]
	\begin{center}
 \begin{tikzpicture}[x=0.75pt,y=0.75pt,yscale=-1,xscale=1]
%uncomment if require: \path (0,252); %set diagram left start at 0, and has height of 252

%Shape: Grid [id:dp7940586034191837] 
\draw  [draw opacity=0] (480,43) -- (509.94,43) -- (509.94,222.16) -- (480,222.16) -- cycle ; \draw   (480,43) -- (480,222.16)(509.86,43) -- (509.86,222.16) ; \draw   (480,43) -- (509.94,43)(480,72.86) -- (509.94,72.86)(480,102.71) -- (509.94,102.71)(480,132.57) -- (509.94,132.57)(480,162.42) -- (509.94,162.42)(480,192.28) -- (509.94,192.28)(480,222.13) -- (509.94,222.13) ; \draw    ;
%Shape: Grid [id:dp6805432859145484] 
\draw  [draw opacity=0] (567.34,43) -- (598.15,43) -- (598.15,132.67) -- (567.34,132.67) -- cycle ; \draw   (567.34,43) -- (567.34,132.67)(597.19,43) -- (597.19,132.67) ; \draw   (567.34,43) -- (598.15,43)(567.34,72.86) -- (598.15,72.86)(567.34,102.71) -- (598.15,102.71)(567.34,132.57) -- (598.15,132.57) ; \draw    ;

% Text Node
\draw (484.58,51.67) node [anchor=north west][inner sep=0.75pt]   [align=left] {1};
% Text Node
\draw (484.58,81.52) node [anchor=north west][inner sep=0.75pt]   [align=left] {4};
% Text Node
\draw (484.58,111.38) node [anchor=north west][inner sep=0.75pt]   [align=left] {5};
% Text Node
\draw (484.58,141.23) node [anchor=north west][inner sep=0.75pt]   [align=left] {7};
% Text Node
\draw (484.58,171.09) node [anchor=north west][inner sep=0.75pt]   [align=left] {8};
% Text Node
\draw (484.58,200.95) node [anchor=north west][inner sep=0.75pt]   [align=left] {9};
% Text Node
\draw (571.92,51.67) node [anchor=north west][inner sep=0.75pt]   [align=left] {2};
% Text Node
\draw (571.92,81.52) node [anchor=north west][inner sep=0.75pt]   [align=left] {3};
% Text Node
\draw (571.92,111.38) node [anchor=north west][inner sep=0.75pt]   [align=left] {6};
% Text Node
\draw (649.43,50.09) node [anchor=north west][inner sep=0.75pt]    {$\emptyset $};

\end{tikzpicture}
\end{center}
	\caption{A standard tableau of shape $\lambda=((1^6),(1^3),0)$ } \label{oct}
\end{figure}
 
For $\lambda,\mu\in \mathfrak{B}_n^{(r)}$, we say $\lambda\unlhd \mu$ if for each $\gamma_0\in \mathbb{N}\times \{1\}\times\{1,2\dots,r\}$ we have
\begin{equation}\label{podf}
	|\{\gamma\in[\lambda]:\gamma \leq \gamma_0\}|\geq 	|\{\gamma\in[\mu ]:\gamma \leq \gamma_0\}|.
\end{equation}

Thus, roughly speaking, a multipartition is smaller if its diagram has more `small' nodes with respect to the total order $\leq$. The relation $\unlhd$ defines a partial order on $\mathfrak{B}_n^{(r)}$. The cellular structure of our generalised Temperley-Lieb algebra $TL_{r,1,n}$ will involve this poset. As an example, the diagram in Figure-\ref{fig-po} shows the partial order on $\mathfrak{B}_3^{(3)}$.

	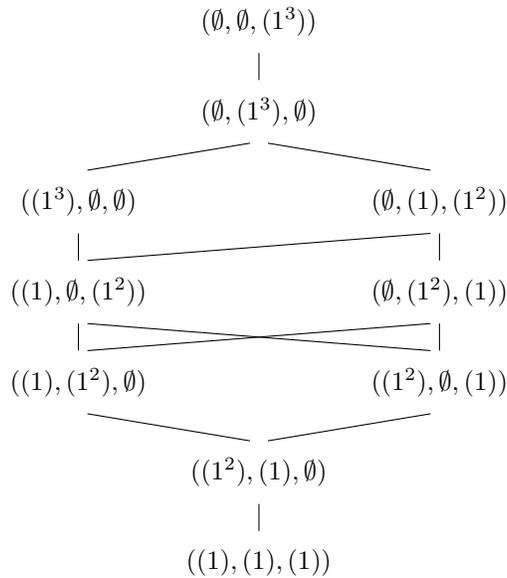
\begin{figure}[ht]
	\begin{center}
		\begin{tikzpicture}[scale=1.2]

			%% draw straight lines where required
			%\foreach \x in {1,4,6,9,11,14}
			%\draw (\x,3)--(\x,0);
			\draw (4,0.6)--(4,0.9);
			\draw (3.9,1.6)--(2.1,1.9);
			\draw (4.1,1.6)--(5.9,1.9);
			\draw (2,2.6)--(2,2.9);
			\draw (6,2.6)--(6,2.9);
			\draw (2.1,2.6)--(5.9,2.9);
			\draw (5.9,2.6)--(2.1,2.9);
			\draw (2.1,3.6)--(5.9,3.9);
			\draw (2,3.6)--(2,3.9);
            \draw (6,3.6)--(6,3.9);
			\draw (3.9,4.9)--(2.1,4.6);
            \draw (4.1,4.9)--(5.9,4.6);
			\draw (4,5.6)--(4,5.9);

			%% label the diagrams
		%	\draw node[below] at (3,2.8){$S$};\draw node[below] at (10,2.8){$T$};\draw node[below] at (6.5,-0.2){$D_{S,T}^3$};
			\draw node at (4,0.25){$((1),(1),(1))$};
			\draw node at (4,1.25){$((1^2),(1),\emptyset)$};
			\draw node at (2,2.25){$((1),(1^2),\emptyset)$};
			\draw node at (6,2.25){$((1^2),\emptyset,(1))$};			
			\draw node at (2,3.25){$((1),\emptyset,(1^2))$};			
			\draw node at (6,3.25){$(\emptyset,(1^2),(1))$};
			\draw node at (2,4.25){$((1^3),\emptyset,\emptyset)$};			
			\draw node at (6,4.25){$(\emptyset,(1),(1^2))$};
			\draw node at (4,5.25){$(\emptyset,(1^3),\emptyset)$};			
			\draw node at (4,6.25){$(\emptyset,\emptyset,(1^3))$};			
		\end{tikzpicture}
		%\centerline{Figure 2}
	\end{center}
	\caption{Partial order on $\mathfrak{B}_3^{(3)}$}\label{fig-po}
\end{figure}

For $1\leq k\leq r$, denote by $\mathfrak{B}_n^{(r)}(k)$ the subset of $\mathfrak{B}_n^{(r)}$ consisting of the multipartitions containing exactly $k$ non-empty components. We next discuss some consequences of the definition of the partial order.
\begin{lem}
    
$\label{dw}$
	Let $\lambda\in \mathfrak{B}_n^{(r)}(k)$ and $\mu \in \mathfrak{B}_n^{(r)}(l)$ be two $r$-partitions of $n$. If $\lambda\unlhd \mu$, then $k\geq l$.
\end{lem}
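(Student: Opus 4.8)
The plan is to extract the conclusion from the defining inequality \eqref{podf} by testing it on a single, carefully chosen node $\gamma_0$. The guiding intuition is the ``roughly speaking'' remark preceding the lemma: being $\unlhd$-smaller forces a multipartition to have more nodes that are small in the total order $\le$, and the smallest nodes of all are precisely the first-row nodes, one of which sits in every non-empty component.

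Concretely, I would first record the following elementary count. Let $\nu \in \mathfrak{B}_n^{(r)}$ be any one-column $r$-partition with exactly $j$ non-empty components. Since $\nu$ is one-column, each non-empty component $\nu^{(c)}$ equals $(1^{m_c})$ for some $m_c \ge 1$, so it contains the node $(1,1,c)$ in its first row, while each empty component contains no node at all. Hence the first row of $[\nu]$ contains exactly $j$ nodes. Next observe that, for the total order defined in \S\ref{1cmltp}, a node $(a,1,c)$ satisfies $(a,1,c) \le (1,1,r)$ if and only if $a = 1$, because the order compares row indices first and $(1,1,r)$ is the last node of the first row. Combining these two observations,
\begin{equation*}
	\bigl|\{\gamma \in [\nu] : \gamma \le (1,1,r)\}\bigr| = j.
\end{equation*}

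Finally I would apply this to $\lambda$ (with $j = k$) and to $\mu$ (with $j = l$), and substitute $\gamma_0 = (1,1,r) \in \mathbb{N} \times \{1\} \times \{1,2,\dots,r\}$ into \eqref{podf}, which is permitted since $\lambda \unlhd \mu$. This yields
\begin{equation*}
	k = \bigl|\{\gamma \in [\lambda] : \gamma \le \gamma_0\}\bigr| \ge \bigl|\{\gamma \in [\mu] : \gamma \le \gamma_0\}\bigr| = l,
\end{equation*}
as required. The argument is entirely direct, with no real obstacle; the only point needing (minimal) care is the double count of first-row nodes, which is immediate from the one-column hypothesis, and the verification that $(1,1,r)$ dominates exactly the first-row nodes, which is immediate from the definition of the total order.
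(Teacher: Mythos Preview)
Your proof is correct and is essentially identical to the paper's own argument: both choose $\gamma_0=(1,1,r)$, observe that the nodes $\le\gamma_0$ are exactly the first-row nodes (one per non-empty component), and then read off $k\ge l$ from \eqref{podf}. You have merely spelled out the counting step in slightly more detail than the paper does.
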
 
\begin{proof}
	Let $\gamma_0=(1,1,r)$, then we have $|\{\gamma\in[\lambda]:\gamma \leq \gamma_0\}|=k$ and $|\{\gamma\in[\mu]:\gamma \leq \gamma_0\}|=l$. The statement is now an immediate consequence of the definition of the partial order in $(\ref{podf})$.
\end{proof}
The converse of this lemma is not always true. Counterexamples can be found in Figure-\ref{fig-po}.

To study the cellular structure of our generalized Temperley-Lieb algebras, we are particularly interested in those multipartitions with at most two non-empty components. The following lemmas give descriptions of the partial order on the elements of $ \mathfrak{B}_n^{(r)}(1)$ and $ \mathfrak{B}_n^{(r)}(2)$.

\begin{lem}$\label{25}$
	Let $\lambda,\mu\in \mathfrak{B}_n^{(r)}(1)$. Suppose the $k^{th}$ component of $\lambda$ and $l^{th}$ component of $\mu$ are non-empty; then $\lambda\unlhd \mu$ if and only if $k\leq l$.
\end{lem}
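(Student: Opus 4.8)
The plan is to work directly from the definition of $\unlhd$ in $(\ref{podf})$, specialising to one-column multipartitions with a single non-empty component. Write $\lambda = ((1^n) \text{ in position } k)$ and $\mu = ((1^n) \text{ in position } l)$, so $[\lambda] = \{(a,1,k) : 1 \leq a \leq n\}$ and $[\mu] = \{(a,1,l) : 1 \leq a \leq n\}$. For each $\gamma_0 = (i,1,j)$ I need to compare $|\{\gamma \in [\lambda] : \gamma \leq \gamma_0\}|$ with $|\{\gamma \in [\mu] : \gamma \leq \gamma_0\}|$ using the total order on nodes recalled in \S\ref{1cmltp}, namely $(a,1,s) \leq (a',1,s')$ iff $a < a'$, or $a = a'$ and $s \leq s'$.

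**The counting step.** A node $(a,1,k) \in [\lambda]$ satisfies $(a,1,k) \leq (i,1,j)$ precisely when $a < i$, or $a = i$ and $k \leq j$. Hence the count equals $i-1$ if $k > j$, and equals $i$ if $k \leq j$ (with the usual clamp at $n$ when $i > n$, but since $1 \leq a \leq n$ one gets $\min(i,n)$ or $\min(i-1,n)$; the clamping is harmless and I will note it only in passing). The same formula holds for $\mu$ with $l$ in place of $k$. So the inequality $|\{\gamma \in [\lambda] : \gamma \leq \gamma_0\}| \geq |\{\gamma \in [\mu] : \gamma \leq \gamma_0\}|$ can only possibly fail when the two counts straddle the jump, i.e.\ when exactly one of $k \leq j$, $l \leq j$ holds.

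**Both directions.** If $k \leq l$: for any $j$, if $l \leq j$ then also $k \leq j$, so both counts are $i$; if $j < k \leq l$ both counts are $i-1$; and if $k \leq j < l$ then $\lambda$'s count is $i$ while $\mu$'s is $i-1$, so the required inequality holds (strictly, for $i \geq 1$). Thus $\lambda \unlhd \mu$. Conversely, if $k > l$, choose $\gamma_0 = (1,1,l)$: then $l \leq l$ so $\mu$'s count is $1$, while $k > l$ gives $\lambda$'s count $0$, violating $(\ref{podf})$; hence $\lambda \not\unlhd \mu$. This establishes the equivalence.

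**Anticipated obstacle.** There is no deep difficulty here — the proof is a direct unwinding of the definitions. The only point requiring a little care is bookkeeping the boundary behaviour of the counting function at $i = 1$ and for $i > n$ (where the count saturates at $n$), to be sure the strict inequality in the "only if" direction really is produced by an admissible choice of $\gamma_0$; taking $\gamma_0 = (1,1,l)$ sidesteps this cleanly. I would present the argument by displaying the two-case formula for the counting function and then reading off both implications, as above.
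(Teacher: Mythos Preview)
Your argument is correct. The paper does not actually supply a proof of this lemma: it is stated immediately after Lemma~\ref{dw} and left as an evident consequence of the definition in $(\ref{podf})$, in the same spirit as the one-line proof given for Lemma~\ref{dw}. Your direct computation of the counting function $|\{\gamma\in[\lambda]:\gamma\leq\gamma_0\}|$ and the choice $\gamma_0=(1,1,l)$ for the converse direction are exactly what is implicitly intended, and mirror the paper's own argument for Lemma~\ref{dw} (which uses $\gamma_0=(1,1,r)$).
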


\begin{lem}$\label{26}$
	Let $\lambda\in \mathfrak{B}_n^{(r)}(2)$ with $\lambda^{(k_1)}$ and $\lambda^{(k_2)}$ ($k_1<k_2$) non-empty and let $\mu\in \mathfrak{B}_n^{(r)}(1)$ with $\mu^{(l)}$ as the only non-empty component. Then $\lambda\unlhd \mu$ if and only if $k_1\leq l$.
\end{lem}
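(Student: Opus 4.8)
The plan is to unwind the definition of $\unlhd$ given in $(\ref{podf})$ directly, using the total order $\leq$ on the node set $N=\{(a,1,l)\mid a>0,\ 1\le l\le r\}$. Write $n_1=|\lambda^{(k_1)}|$ and $n_2=|\lambda^{(k_2)}|$, so $n_1+n_2=n=|\mu^{(l)}|$, and recall that $\mu$ has its single column of $n$ nodes in component $l$, namely the nodes $(1,1,l),(2,1,l),\dots,(n,1,l)$. For a test node $\gamma_0=(a,1,j)$ the quantity $c_\mu(\gamma_0):=|\{\gamma\in[\mu]:\gamma\le\gamma_0\}|$ is easy to compute: it equals $a-1$ if $j<l$, and $a$ if $j\ge l$ (capped at $n$). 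Similarly $c_\lambda(\gamma_0)$ is a sum of the analogous counts over the two components $k_1,k_2$. So the lemma reduces to checking the inequality $c_\lambda(\gamma_0)\ge c_\mu(\gamma_0)$ for every $\gamma_0$, and I would organise the verification by the position of $j$ relative to $k_1$, $k_2$ and $l$.

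For the ``if'' direction, assume $k_1\le l$. The key observation is that every node of $\mu$ lies in component $l\ge k_1$, whereas the $n_1$ nodes of $\lambda^{(k_1)}$ lie in component $k_1\le l$; comparing row by row, the $i$-th node of $\lambda^{(k_1)}$ is $\le$ the $i$-th node of $\mu$ in the total order (same row $i$, component $k_1\le l$), for $1\le i\le n_1$. Thus already the $k_1$-component of $\lambda$ contributes at least as many nodes $\le\gamma_0$ as $\mu$ does, \emph{provided} $\gamma_0$ is large enough that all of $\mu$'s first $n_1$ nodes that are $\le\gamma_0$ are matched; one has to handle small $\gamma_0$ (where $c_\mu(\gamma_0)\le n_1$ anyway, and the matching suffices) separately from large $\gamma_0$ (where the extra nodes of $\lambda^{(k_2)}$ make up any deficit, since the total sizes agree). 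I would phrase this cleanly by fixing $\gamma_0=(a,1,j)$ and splitting into the cases $c_\mu(\gamma_0)\le n_1$ and $c_\mu(\gamma_0)>n_1$.

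For the ``only if'' direction, argue by contrapositive: suppose $k_1>l$, i.e. $l<k_1<k_2$. Take the test node $\gamma_0=(1,1,l)$. Then $c_\mu(\gamma_0)=1$ since $(1,1,l)\in[\mu]$, while $c_\lambda(\gamma_0)=0$ because every node of $\lambda$ sits in component $k_1$ or $k_2$, both strictly greater than $l$, and a node $(1,1,k_i)$ with $k_i>l$ satisfies $(1,1,k_i)>(1,1,l)=\gamma_0$. Hence $(\ref{podf})$ fails at $\gamma_0$, so $\lambda\ntrianglelefteq\mu$. I expect the main obstacle to be bookkeeping in the ``if'' direction — making the row-by-row matching between $\lambda^{(k_1)}$ and $\mu$ precise while simultaneously accounting for the nodes of $\lambda^{(k_2)}$ — but this is entirely elementary once the counting functions $c_\lambda,c_\mu$ are written down explicitly, and Lemmas \ref{25} and \ref{dw} can be invoked to streamline some of the sub-cases.
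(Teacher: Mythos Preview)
Your argument is correct. The ``only if'' direction is clean and exactly right: testing at $\gamma_0=(1,1,l)$ immediately gives $c_\mu(\gamma_0)=1>0=c_\lambda(\gamma_0)$ when $l<k_1$. For the ``if'' direction your matching idea works, though the write-up is more a sketch than a proof; the honest verification is the case split you describe, and it goes through with the elementary inequality $\min(x,n_1)+\min(x,n_2)\ge\min(x,n_1+n_2)$ underpinning the count.

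The paper itself states Lemma~\ref{26} (and the companion Lemma~\ref{25}) without proof, treating both as immediate consequences of the definition $(\ref{podf})$; they are then used as ingredients in the proof of Lemma~\ref{27}. So there is no ``paper's proof'' to compare against: you are simply writing out what the authors regard as routine. Your approach is the natural one and is entirely adequate; the only improvement would be to tighten the ``if'' direction into a single clean inequality rather than leaving it as a verbal matching argument with cases to be filled in.
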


Denote by $\lambda_{k_1,k_2}^{[a]}\in \mathfrak{B}_n^{(r)}(2)$ ($k_1<k_2$, $|a|<n$ and $a\equiv n$ $mod$ 2 ) the multipartition $\lambda$ of which $\lambda^{(k_1)}$ and $\lambda^{(k_2)}$ are the two non-empty components and $|\lambda^{(k_1)}|-|\lambda^{(k_2)}|=a$. The following lemma gives a description of the partial order on $\mathfrak{B}_n^{(r)}(2)$.

\begin{lem}$\label{27}$
	Let $\lambda_{k_1,k_2}^{[a]},\mu_{l_1,l_2}^{[b]}\in \mathfrak{B}_n^{(r)}(2)$. Then $\lambda_{k_1,k_2}^{[a]}\unlhd \mu_{l_1,l_2}^{[b]}$ if and only if $k_1\leq l_1$, $k_2\leq l_2$ and one of the following holds:
	
	1. $|a|<|b|$;
	
	2.$|a|=|b|$ and $a\geq b$;
	
	3. $|a|=|b|$, $a<b$ and $k_2\leq l_1$.
	
\end{lem}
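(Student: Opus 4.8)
The plan is to translate the partial order $\unlhd$ into a node-by-node comparison and then run a short case analysis according to the signs of $a$ and $b$. The key preliminary observation, valid for any $\lambda,\mu\in\mathfrak{B}_n^{(r)}$, is the following reformulation: if $\gamma_1^{\nu}<\gamma_2^{\nu}<\dots<\gamma_n^{\nu}$ lists the nodes of $[\nu]$ in increasing order for the total order $\le$ on $N$, then $\lambda\unlhd\mu$ if and only if $\gamma_j^{\lambda}\le\gamma_j^{\mu}$ for every $j\in\{1,\dots,n\}$. This is immediate from $(\ref{podf})$, since $|\{\gamma\in[\nu]:\gamma\le\gamma_0\}|$ is exactly the number of indices $j$ with $\gamma_j^{\nu}\le\gamma_0$: evaluating the inequality in $(\ref{podf})$ at $\gamma_0=\gamma_j^{\mu}$ gives $\gamma_j^{\lambda}\le\gamma_j^{\mu}$, and conversely, if $\gamma_j^{\lambda}\le\gamma_j^{\mu}$ for all $j$ then any node $\gamma_0$ having exactly $m$ nodes of $[\mu]$ weakly below it satisfies $\gamma_m^{\lambda}\le\gamma_m^{\mu}\le\gamma_0$, so at least $m$ nodes of $[\lambda]$ lie weakly below $\gamma_0$.

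Next I would make the two enumerations explicit. For $\lambda=\lambda_{k_1,k_2}^{[a]}$, write $p=(n-|a|)/2$ for the smaller of the two column heights; then the first $2p$ nodes of $[\lambda]$ form the interleaved list $(1,1,k_1),(1,1,k_2),\dots,(p,1,k_1),(p,1,k_2)$, and the remaining $|a|$ nodes are $(p+1,1,c),\dots,(p+|a|,1,c)$, where $c=k_1$ if $a>0$ and $c=k_2$ if $a<0$; the same description holds for $\mu=\mu_{l_1,l_2}^{[b]}$ with smaller height $q=(n-|b|)/2$ and tail component $d$. Necessity of the conditions then drops out of the reformulation: comparing the first two nodes forces $k_1\le l_1$ and $k_2\le l_2$; if $|a|>|b|$ then $q>p$, so $\gamma_{2q}^{\lambda}$ has already fallen into $\lambda$'s tail and lies in row $2q-p$, which exceeds the row $q$ of $\gamma_{2q}^{\mu}$, whence $\lambda\not\unlhd\mu$ and so $|a|\le|b|$; and when $|a|=|b|$ with $a<b$ one has $a=-|a|$, $b=|b|$, hence $c=k_2$ and $d=l_1$, and the first tail nodes $(p+1,1,k_2)$ of $[\lambda]$ and $(p+1,1,l_1)$ of $[\mu]$ sit in the same row, forcing $k_2\le l_1$, which is condition~3.

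For sufficiency I would assume $k_1\le l_1$ and $k_2\le l_2$ and distinguish cases. If $|a|<|b|$ (so $q<p$), I compare $\gamma_j^{\lambda}$ with $\gamma_j^{\mu}$ over the ranges $j\le 2q$, $2q<j\le 2p$ and $2p<j\le n$: in the first both nodes are still in the interleaved part and the required inequality is $k_1\le l_1$ or $k_2\le l_2$; in the second $\gamma_j^{\mu}$ already lies in $\mu$'s tail while $\gamma_j^{\lambda}$ is still interleaved, and a count of rows shows that $\gamma_j^{\lambda}$ is weakly above $\gamma_j^{\mu}$, with rows equal only at $j=2q+1$, where the comparison reduces to $k_1\le l_1$; in the third both nodes are in their tails and the row of $\gamma_j^{\lambda}$ is strictly above that of $\gamma_j^{\mu}$. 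If $|a|=|b|$, the two tails have the same length and begin in the same row $p+1=q+1$, so after matching the interleaved parts (which again uses only $k_1\le l_1$ and $k_2\le l_2$) it remains to check $c\le d$; when $a\ge b$ this follows from $k_1<k_2$, $k_1\le l_1$ and $k_2\le l_2$ (the one new subcase being $c=k_1$, $d=l_2$), and when $a<b$ it is precisely the extra hypothesis $k_2\le l_1$ of condition~3. In every case $\gamma_j^{\lambda}\le\gamma_j^{\mu}$ for all $j$, so $\lambda\unlhd\mu$.

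I expect the main obstacle to be the boundary case $|a|=|b|$. There the two Young diagrams carry the same multiset of column heights and the same tail length, so the comparison becomes genuinely sensitive to which component of $\lambda$ and of $\mu$ holds the taller column; this is exactly what breaks the symmetry between conditions~2 and~3 and produces the extra inequality $k_2\le l_1$. Keeping track of which component the tail occupies in each of the four sign combinations of $a$ and $b$ is the only delicate bookkeeping in the argument.
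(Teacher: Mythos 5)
Your proof is correct. It takes a somewhat different route from the paper's: the paper argues straight from the counting definition (\ref{podf}), extracting $k_1\le l_1$ and $k_2\le l_2$ by evaluating at $\gamma_0=(1,1,l_i)$, then truncating both diagrams below row $c=\min(a_1,a_2,b_1,b_2)$ and finishing by applying Lemmas \ref{dw}, \ref{25} and \ref{26} to the sub-diagrams (which have at most two, respectively exactly one, non-empty components); you instead first prove a general reformulation of $\unlhd$ as the termwise comparison $\gamma_j^{\lambda}\le\gamma_j^{\mu}$ of the sorted node lists, and then run the case analysis index by index on the explicit interleaved-plus-tail enumeration. Your version is self-contained (it never invokes Lemmas \ref{25} and \ref{26}, and in effect re-derives the needed special cases inside the bookkeeping), and the sorted-list criterion is a clean reusable tool that makes the necessity of $|a|\le|b|$ and of $k_2\le l_1$ in the boundary case visible at a single index ($j=2q$, respectively $j=2p+1$); the paper's truncation argument is shorter on the page precisely because it leans on the earlier lemmas and avoids the explicit row arithmetic. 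Two spots in your write-up are compressed but harmless: at $j=2q+1$ the needed inequality is $k_1\le d$ with $d\in\{l_1,l_2\}$, which indeed follows from $k_1\le l_1<l_2$ even though you phrase it as `reduces to $k_1\le l_1$'; and in the boundary necessity step you implicitly use that $|a|=|b|>0$ when $a<b$ (if $|a|=|b|=0$ then $a=b$, so this case cannot occur), which is worth one sentence.
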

\begin{proof}
	For $i=1,2$, denote by $a_i$ and $b_i$ the number of nodes in $\lambda^{(k_i)}$ and $\mu^{(l_i)}$ respectively.
	
	We first check the sufficiency. Let $c=min(a_1,a_2,b_1,b_2)$. As $|a|\leq|b|$, $c=b_i$ for some $i\in\{1,2\}$. The condition $k_1\leq l_1$, $k_2\leq l_2$ implies that
\begin{equation}
	|\{\gamma\in[\lambda]:\gamma \leq \gamma_0\}|\geq 	|\{\gamma\in[\mu ]:\gamma \leq \gamma_0\}|,
\end{equation}
for $\gamma_0\in \{1,2,\dots,c\}\times \{1\}\times\{1,2\dots,r\}$ and the equality holds for  $\gamma_0=(c,1,r)$. Denote by $[\lambda']$ and $[\mu']$ the sub-diagrams of $[\lambda_{k_1,k_2}^{[a]}]$ and $[\mu_{l_1,l_2}^{[b]}]$	consisting of the nodes below the $c^{th}$ row. If they are not empty, $|a|\leq|b|$ implies that $[\mu']$ has exactly one non-empty component and $[\lambda']$ has at most two. Lemma $\ref{25}$ and $\ref{26}$ guarantees the sufficiency of the listed three cases.

    On the other hand, if $\lambda_{k_1,k_2}^{[a]}\unlhd \mu_{l_1,l_2}^{[b]}$, let $\gamma_0=(1,1,l_i)$ with $i=1,2$ in $(\ref{podf})$, then we have
    \begin{equation*}
    	k_1\leq l_1\text{ and }k_2\leq l_2.
    \end{equation*}
Similarly, let $c=min(a_1,a_2,b_1,b_2)$ and $\gamma_0=(c,1,r)$. We have 
\begin{equation}
	|\{\gamma\in[\lambda]:\gamma \leq \gamma_0\}|= 	|\{\gamma\in[\mu ]:\gamma \leq \gamma_0\}|.
\end{equation}
Denote by $[\lambda']$ and $[\mu']$ the sub-diagrams of $[\lambda_{k_1,k_2}^{[a]}]$ and $[\mu_{l_1,l_2}^{[b]}]$	consisting of the nodes below the $c^{th}$ row. Then $\lambda_{k_1,k_2}^{[a]}\unlhd \mu_{l_1,l_2}^{[b]}$ implies $\lambda'\unlhd \mu'$. If $|a|>|b|$, we have $\lambda'\in \mathfrak{B}_{n-2c}^{(r)}(1)$ and $\mu'\in \mathfrak{B}_{n-2c}^{(r)}(2)$, which is contradictory to $\lambda'\unlhd \mu'$ by Lemma $\ref{dw}$. If $|a|=|b|$ and $a<0<b$, then $\lambda'\in \mathfrak{B}_{n-2c}^{(r)}(1)(l_1)$ and $\mu'\in \mathfrak{B}_{n-2c}^{(r)}(1)(k_2)$. By Lemma $\ref{25}$, we have $k_2\leq l_1$. Otherwise, we have either $|a|<|b|$ or $|a|=|b|$ and $a\geq b$. This completes the proof.
\end{proof}

 We next introduce a special standard tableau $t^{\lambda}$ for each multipartition $\lambda$ which will be used in Section \ref{gcs} to define the cellular basis of our generalised Temperley-Lieb algebra $TL_{r,1,n}$. Let $t^{\lambda}$ be the tableau of shape $\lambda$ such that $t^{\lambda}(i)<t^{\lambda}(j)$ if $1\leq i<j\leq n$. As $\leq$ is a total order on the nodes, $t^{\lambda}$ is unique for $\lambda\in \mathfrak{B}_n^{(r)}$. Moreover, for each $t\in Std(\lambda)$, denote by $d(t)\in \mathfrak{S}_n$ the unique permutation such that $t=t^{\lambda}\circ d(t)$ where $\circ$ is the natural $\mathfrak{S}_n$-action on the tableaux.

\subsection{Cyclotomic Hecke algebras and their representations}
\label{DF}
We next recall the definition of cyclotomic Hecke algebras and description of their representations following \cite{ARIKI1994216} by Ariki and Koike. The irreducible representations are described in the language of multipartions. Our generalised Temperley-Lieb algebras will be defined as quotients of cyclotomic Hecke algebras by certain ideals generated by a set of idempotents corresponding to a set of irreducible representations.

We first recall the presentation of the imprimitive reflection group $G(r,1,n)$.
\begin{lem} (cf. \cite{broue1994complex},Appendix 2)
	Let $r$ and $n$ be positive integers. The unitary reflection group $G(r,1,n)$ is the group generated by $s_0,s_1,s_2,$ $s_3,\dots,s_{n-1}$ subject to the following relations:
	\begin{equation*}
	  	\begin{aligned}
		s_0^r=s_1^2=s_2^2&=\dots=s_{n-1}^2=1;\\
		s_0s_1s_0s_1=&s_1s_0s_1s_0;\\
		s_is_{i+1}s_i=s_{i+1}s_is_{i+1} & \text{ for }1\leq i\leq n-2;\\
		s_is_j=s_js_i & \text{ for }|i-j|\geq 2.
	\end{aligned}
	\end{equation*}
\end{lem}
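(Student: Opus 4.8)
The statement is classical; the plan is to identify the abstract group $G$ presented by the given generators and relations with the concrete complex reflection group $G(r,1,n)$. Realise $G(r,1,n)$ as the group of $n\times n$ monomial matrices whose nonzero entries are $r$-th roots of unity; equivalently $G(r,1,n)\cong(\mathbb{Z}/r\mathbb{Z})\wr\mathfrak{S}_n$, a group of order $r^n n!$. First I would define a homomorphism $\phi\colon G\to G(r,1,n)$ on generators by $\phi(s_0)=\mathrm{diag}(\zeta,1,\dots,1)$ for a fixed primitive $r$-th root of unity $\zeta$, and $\phi(s_i)=$ the permutation matrix of the transposition $(i,i+1)$ for $1\le i\le n-1$. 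A routine verification that these matrices satisfy the four families of relations shows $\phi$ is well defined, and $\phi$ is surjective: the $\phi(s_i)$ with $i\ge 1$ generate the monomial permutation matrices, conjugating $\phi(s_0)$ by these produces every diagonal matrix with a single entry $\zeta$, and these generate the diagonal subgroup $(\mathbb{Z}/r\mathbb{Z})^n$.

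It then suffices to prove the reverse bound $|G|\le r^n n!$, which forces $\phi$ to be an isomorphism. Set $t_1:=s_0$ and inductively $t_{j+1}:=s_j t_j s_j$ for $1\le j\le n-1$, and put $N:=\langle t_1,\dots,t_n\rangle\le G$. Working purely from the relations, I would establish: (i) each $t_j$ is a conjugate of $s_0$, so $t_j^r=1$; (ii) the $t_j$ commute pairwise — the base case $t_1 t_2=t_2 t_1$ is the relation $s_0 s_1 s_0 s_1=s_1 s_0 s_1 s_0$ rewritten, and the general case follows by conjugating this identity with braid words and using $s_i s_j=s_j s_i$ for $|i-j|\ge 2$; and (iii) $N$ is normalised by every generator: for $i\ge 1$ one checks $s_i t_i s_i=t_{i+1}$, $s_i t_{i+1}s_i=t_i$ and $s_i t_j s_i=t_j$ for $j\notin\{i,i+1\}$, while $s_0$ fixes every $t_j$ (it fixes $t_1$ trivially, fixes $t_2$ by the mixed relation, and fixes $t_j$ for $j\ge 3$ because $s_0$ commutes with $s_2,\dots,s_{j-1}$ and one reduces again to the mixed relation). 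By (i) and (ii), $N$ is abelian and generated by $n$ elements of order dividing $r$, so $|N|\le r^n$. By (iii), $N\trianglelefteq G$; in $G/N$ the image of $s_0$ is trivial, so $G/N$ is generated by the images of $s_1,\dots,s_{n-1}$, which satisfy the braid and far-commutation relations of the Coxeter presentation of $\mathfrak{S}_n$ (the mixed relation becomes vacuous once $s_0\mapsto 1$). Hence $|G/N|\le n!$ and $|G|=|N|\,|G/N|\le r^n n!$, as required.

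Combining the two inequalities, $\phi$ is a surjection from a group of order at most $r^n n!$ onto a group of order exactly $r^n n!$, hence an isomorphism, which proves the presentation. The only real work is the bookkeeping in (ii) and (iii): one must confirm that the braid relation $s_i s_{i+1} s_i=s_{i+1}s_i s_{i+1}$ makes the inductive definition of $t_j$ independent of the reduced word chosen, so that conjugation by $s_i$ with $i\ge 1$ genuinely realises the transposition $(i,i+1)$ on the indices of the $t_j$, and that $s_0$ acts trivially on all of $N$, for which the relation $s_0 s_1 s_0 s_1=s_1 s_0 s_1 s_0$ together with the far-commutation relations is exactly enough. Once these are in place, the order count is immediate.
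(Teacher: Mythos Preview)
Your argument is correct and complete. The strategy---build the obvious surjection $\phi\colon G\twoheadrightarrow G(r,1,n)$, then bound $|G|$ from above by exhibiting the normal abelian subgroup $N=\langle t_1,\dots,t_n\rangle$ and recognising $G/N$ as a quotient of $\mathfrak{S}_n$---is the standard way to verify such a presentation, and the details you outline (the permutation action of the $s_i$ on the $t_j$, the commutation $[s_0,t_j]=1$ via the mixed relation plus far-commutation) go through exactly as you say.

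By way of comparison, the paper does not actually prove this lemma: it is stated with a citation to Brou\'e--Malle--Rouquier, and the surrounding text merely records the matrix realisation of the generators (essentially your map $\phi$) without arguing that the relations are complete. So you have supplied a proof where the paper simply quotes the literature. Your approach is the natural elementary one; the cited reference establishes the result in the broader context of presentations for all complex reflection groups, but for $G(r,1,n)$ specifically the wreath-product structure makes your direct order-count the cleanest route.
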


The generators $s_0,s_1,s_2,\dots,s_{n-1}$ are called simple reflections and the relations above can be represented in the Dynkin diagram in Figure-$\ref{DDr}$.
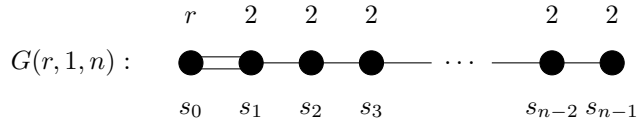
\begin{figure}[ht]
	\begin{center}
		\begin{tikzpicture}[scale=0.8]
			
			%% draw filled dots at required points
			\foreach \x in {1,2,3,4,8,7}
			\filldraw(\x,0) circle (0.2cm);

			%% draw straight lines where required
			%\foreach \x in {1,4,6,9,11,14}
			%\draw (\x,3)--(\x,0);
			\draw (1,0.1)--(2,0.1);
			\draw (1,-0.1)--(2,-0.1);
			\draw (2,0)--(5,0);
			\draw (6,0)--(8,0);

			%% label the diagrams 
			\draw node[below] at (1,-0.5){$s_0$};\draw node[below] at (2,-0.5){$s_1$};\draw node[below] at (3,-0.5){$s_2$};\draw node[below] at (4,-0.5){$s_3$};\draw node at (5.5,0){$\dots$};\draw node[below] at (7,-0.5){$s_{n-2}$};\draw node[below] at (8,-0.5){$s_{n-1}$};
			\draw node at (-1,0){$G(r,1,n):$};
			\draw node[above] at (1,0.5){$r$};
			\draw node[above] at (2,0.5){$2$};\draw node[above] at (3,0.5){$2$};\draw node[above] at (4,0.5){$2$};\draw node[above] at (7,0.5){$2$};\draw node[above] at (8,0.5){$2$};
			%\draw node[below] at (4,0){4};%\draw node[below] at (5,0){5};\draw node[below] at (6,0){6};
			
		\end{tikzpicture}
		%\centerline{Figure 2}
	\end{center}
	\caption{Dynkin diagram of type $G(r,1,n)$}\label{DDr}
\end{figure} 

The reflection group $G(r,1,n)$ can be viewed as the group consisting of all $n\times n$ monomial matrices such that each non-zero entry is an $r^{th}$ root of unity. Moreover, if $\zeta \in \mathbb{C}^*$ is a primitive $r^{th}$ root of unity, the complex reflection group $G(r,1,n)$ is generated by the following elements:
\begin{equation*}
  \begin{aligned}
	s_0&=\zeta E_{1,1}+\sum_{k=2}^{n}E_{k,k},\\
	s_i&=\sum_{1\leq k\leq i-1}E_{k,k}+E_{i+1,i}+E_{i,i+1}+\sum_{i+2\leq k\leq n}E_{k,k} \text{ for all }1\leq i\leq n-1
\end{aligned}  
\end{equation*}

where $E_{i,j}$ is the elementary $n\times n$ matrix with the $(i,j)$-entry nonzero. Viewed thus, the generators are realised as (pseudo)-reflections in the $n$ dimensional space over $\mathbb{C}$, exhibiting $G(r,1,n)$ as a complex reflection group.

 Let $R=\mathbb{Z}[q,q^{-1},v_1,v_2,\dots,v_r]$ where $q,v_1,v_2,\dots,v_r$ are indeterminates over $\mathbb{C}$. The cyclotomic Hecke algebra corresponding to $G(r,1,n)$ over $R$ is defined as follows:
\begin{defn}\label{Hr}(\cite{broue1994complex},Definition 4.21)
	Let $G(r,1,n)$ and $R$ be as defined above. The cyclotomic Hecke algebra $H(r,1,n)$ corresponding to $G(r,1,n)$ over $R$ is the associative algebra generated by $T_0,T_1,\dots,T_{n-1}$ subject to the  following relations:
	\begin{align}
		(T_0-v_1)(T_0-v_2)\dots (T_0-v_r)&=0 ;\label{ordering relation}\\
		(T_i-q)(T_i+1)&=0\textit{ for } 1\leq i\leq n-1;\label{quadra rlt}\\
		T_0T_1T_0T_1&=T_1T_0T_1T_0;\\
		T_iT_{i+1}T_i&=T_{i+1}T_iT_{i+1}\textit{ for }1\leq i\leq n-2;\\
		T_iT_j&=T_jT_i \textit{ for }|i-j|\geq 2.
		\end{align}
\end{defn} 

The irreducible representations of $H(r,1,n)$ are described in the language of $r$-partitions of $n$.
Let $\lambda$ be an $r$-partition of $n$ and $K$ be the quotient field of $R$. Denote by $f_{\lambda}$ the number of standard tableaux of shape $\lambda$ and by $\mathfrak{Y}^{\lambda}=\{ \mathbb{Y}_1,\mathbb{Y}_2,\dots,\mathbb{Y}_{f_{\lambda}}\}$ the set of all the standard tableaux of shape $\lambda$. Let $\mathbb{V}_{\lambda}$ be the vector space over $K$ with the basis $\mathbb{Y}_1,\mathbb{Y}_2,\dots,\mathbb{Y}_{f_{\lambda}}$; namely,
\begin{equation}
	\mathbb{V}_{\lambda}=\bigoplus_{i} K\mathbb{Y}_i,
\end{equation}
where the sum runs over all the standard tableaux $\mathbb{Y}_i$ of shape $\lambda$.

Let $\gamma=(a,b,i)$ be a node in $\lambda^{(i)}$ for some $i$. The content $c(\lambda :\gamma)$ of $\gamma$ in $\lambda$ is defined to be the content of $\gamma$ in the Young diagram $\lambda^{(i)}$, namely, $c(\lambda :\gamma)=c(\lambda^{(i)} :\gamma)=b-a$.

For any indeterminate $x$ and an integer $k$, let $\Delta(k,x)$ be the polynomial $\Delta(k,x)=1-q^{k}x$ in $q$ and $x$ with coefficients in $K$ and define $M(k,x)$ as follows:
\begin{equation} \label{dfm}
	M(k,x)=\dfrac{1}{\Delta(k,x)}\left(
	\begin{matrix}
		& q-1 &\Delta(k+1,x)\\
		& q\Delta(k-1,x) &q^{k}(1-q)x
	\end{matrix} \right).
\end{equation}
With the notations above, Ariki and Koike describe a complete set of the irreducible representations of $H(r,1,n)$:
\begin{thm}
\label{AK}
	(\cite{ARIKI1994216},Theorem 3.10)
	
	Let $K$ be the quotient field of the base ring $R$ which is defined before Definition \ref{Hr}. $H(K)=H(r,1,n)\bigotimes K$ is semisimple over $K$. The set $\{\mathbb{V}_{\lambda} | \lambda \vdash n \}$ is a complete set of non-isomorphic irreducible $H(K)$-modules. For each multipartition $\lambda$ and standard tableau $\mathbb{Y}$ of shape $\lambda$, $H(r,1,n)$ acts on $\mathbb{Y} \in \mathbb{V}_{\lambda}$ as follows:
	
	(1) The action of $T_0$.
	\begin{equation}
		T_0 \mathbb{Y}=v_{\tau(1)} \mathbb{Y}
	\end{equation}
	where the number 1 occurs in the $\tau(1)$-th component, $\lambda^{(\tau(1))}$, of the tableau $\mathbb{Y}$.
	
	(2)The action of $T_i(i=1,2,\dots,n-1)$.
	
	Case (2.1). If $i$ and $i+1$ appear in the same row of the same component of $\mathbb{Y}$, then
	\begin{equation}
		T_i\mathbb{Y}=q\mathbb{Y}.
	\end{equation}

	Case (2.2). If $i$ and $i+1$ appear in the same column of the same component of $\mathbb{Y}$, then
	\begin{equation}
		T_i\mathbb{Y}=-\mathbb{Y}.
	\end{equation}

	Case (2.3). $i$ and $i+1$ appear neither in the same row nor in the same column of  $\mathbb{Y}$. Let $\mathbb{Y}'$ be the standard tableau obtained by transposing the letters $i$ and $i+1$ in $\mathbb{Y}$. The action of $T_i$ on the two-dimensional subspace spaned by $\mathbb{Y}$ and $\mathbb{Y}'$ is given by
	%Then $T_i$ acts on the two-dimensional subspace spanned by $\mathbb{Y}$ and $\mathbb{Y}'$ via
	\begin{equation}
		[T_i\mathbb{Y},T_i\mathbb{Y}']=[\mathbb{Y},\mathbb{Y}'] M(c(\lambda: i)-c(\lambda: i+1),\dfrac{v_{\tau(i)}}{v_{\tau(i+1)}}),
	\end{equation}
	where $M(c(\lambda: i)-c(\lambda: i+1),\dfrac{v_{\tau(i)}}{v_{\tau(i+1)}})$ is the $2\times 2$ matrix $M(k,x)$ defined in (\ref{dfm}) with $k=c(\lambda: i)-c(\lambda: i+1)$ and $x=\dfrac{v_{\tau(i)}}{v_{\tau(i+1)}}$ and the numbers $i$ and $i+1$ occur in the $\tau(i)$-th and $\tau(i+1)$-th components of the tableau $\mathbb{Y}$, respectively.
\end{thm}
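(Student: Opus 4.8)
The plan is to prove the theorem in three stages, in the spirit of Ariki and Koike. First I would check that the displayed formulas genuinely define an action of $H(r,1,n)$ on each $\mathbb{V}_{\lambda}$; then show that every $\mathbb{V}_{\lambda}$ is absolutely irreducible and that distinct $\lambda$ yield non-isomorphic modules; and finally compare dimensions to conclude that $H(K)=H(r,1,n)\otimes K$ is split semisimple with the $\mathbb{V}_{\lambda}$ forming a complete set of simple modules. The feature that makes everything run smoothly is that $q,v_1,\dots,v_r$ are algebraically independent over $\mathbb{C}$: each polynomial $\Delta(k,x)$ occurring in $M(k,x)$ is then a unit of $K$, and an equality of the form $q^{a}v_i=q^{b}v_j$ forces $a=b$ and $i=j$.

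For the first stage I would verify the defining relations of Definition~\ref{Hr} one at a time on the basis of standard tableaux. The relation $(T_0-v_1)\cdots(T_0-v_r)=0$ is immediate because $T_0$ acts diagonally with eigenvalues among the $v_j$, and $T_iT_j=T_jT_i$ for $|i-j|\ge 2$ holds because $T_i$ and $T_j$ only involve the positions of the disjoint pairs $\{i,i+1\}$ and $\{j,j+1\}$. The quadratic relation $(T_i-q)(T_i+1)=0$ is checked on each $T_i$-stable subspace: it is clear on the one-dimensional pieces (Cases 2.1 and 2.2), and on each two-dimensional piece it is the matrix identity $M(k,x)^2=(q-1)M(k,x)+qI$, a direct computation from $(\ref{dfm})$. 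The braid relation $T_iT_{i+1}T_i=T_{i+1}T_iT_{i+1}$ is checked on the span of the standard tableaux obtained from a fixed one by permuting the three entries $i,i+1,i+2$ --- a subspace of dimension at most $6$ --- where it reduces to a ``hexagon'' identity among the matrices $M(k,x)$; this is Young's seminormal-form computation, carried out in the presence of the extra parameters $v_j$. Finally the mixed braid relation $T_0T_1T_0T_1=T_1T_0T_1T_0$ is checked on the span of the tableaux obtained by permuting the entries $1,2$ (dimension at most $2$): here the point is that $T_0$ contributes exactly the scalar $v_{\tau(1)}/v_{\tau(2)}$ which is the second argument of $M(\cdot,\cdot)$ in Case 2.3, and the identity then drops out of a short calculation.

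For the second stage I would bring in the Jucys--Murphy elements $\xi_1=T_0$ and $\xi_m=T_{m-1}\xi_{m-1}T_{m-1}$ for $2\le m\le n$. An induction on $m$ using the action formulas shows that each standard tableau $\mathbb{Y}$ of shape $\lambda$ is a simultaneous eigenvector, $\xi_m\mathbb{Y}=a_m(\mathbb{Y})\mathbb{Y}$, with $a_m(\mathbb{Y})$ a Laurent monomial in $q$ times $v_{\tau(m)}$ determined by the node of $[\lambda]$ containing $m$. By algebraic independence of the parameters, these joint eigenvalue tuples are pairwise distinct as $\mathbb{Y}$ runs over all standard tableaux of all shapes; hence the tableaux are a multiplicity-free basis of weight vectors for $\mathbb{V}_{\lambda}$, so $\operatorname{End}_{H(K)}(\mathbb{V}_{\lambda})=K$. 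Irreducibility of $\mathbb{V}_{\lambda}$ follows because any two standard tableaux of a fixed shape are joined by a chain of moves each swapping adjacent entries $i,i+1$ that lie in neither the same row nor the same column (a standard combinatorial fact), and in Case 2.3 the off-diagonal entries of $M(k,x)$ are nonzero in $K$; so from any weight vector one reaches the whole tableau basis inside $H(K)\mathbb{Y}$. Distinct $\lambda$ give distinct multisets of $\xi$-eigenvalues, whence $\mathbb{V}_{\lambda}\not\cong\mathbb{V}_{\mu}$ for $\lambda\neq\mu$.

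For the last stage, a standard spanning argument --- the monomials $\xi_1^{a_1}\cdots\xi_n^{a_n}T_w$ with $0\le a_i\le r-1$ and $w\in\mathfrak{S}_n$ (where $T_w$ is the usual basis element of the Hecke subalgebra generated by $T_1,\dots,T_{n-1}$) span $H(K)$ --- gives $\dim_K H(K)\le r^{n}n!$, while $\sum_{\lambda\vdash n}(\dim\mathbb{V}_{\lambda})^2=\sum_{\lambda\vdash n}f_{\lambda}^2=r^{n}n!$ by the $G(r,1,n)$-analogue of the classical identity (a coloured Robinson--Schensted correspondence matches pairs of standard tableaux of equal shape with the $r^{n}n!$ elements of $G(r,1,n)$). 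Since the $\mathbb{V}_{\lambda}$ are pairwise non-isomorphic and absolutely irreducible, the Jacobson density theorem yields a surjection of $K$-algebras $H(K)\twoheadrightarrow\prod_{\lambda\vdash n}\operatorname{End}_K(\mathbb{V}_{\lambda})$ whose target has dimension $r^{n}n!\ge\dim_K H(K)$; this forces it to be an isomorphism, so $H(K)$ is split semisimple and $\{\mathbb{V}_{\lambda}:\lambda\vdash n\}$ is a complete, irredundant list of its simple modules. I expect the main obstacle to be the first stage --- concretely, the braid-relation hexagon identity for the matrices $M(k,x)$ together with the mixed braid relation $T_0T_1T_0T_1=T_1T_0T_1T_0$: these are finite computations, but organising the case analysis over the relative positions of $i,i+1,i+2$ (and of $1,2$) in a standard tableau is where essentially all the work lies; the remaining stages are bookkeeping once genericity of the parameters is exploited.
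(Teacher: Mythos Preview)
The paper does not prove this theorem: it is quoted verbatim as background from Ariki--Koike \cite{ARIKI1994216}, Theorem~3.10, and no argument is supplied. Your three-stage outline---verifying the defining relations on the seminormal basis, separating and proving irreducibility via the Jucys--Murphy elements, and concluding with the dimension count $\sum_\lambda f_\lambda^2=r^n n!$---is precisely the strategy of the original Ariki--Koike paper, so there is nothing to compare against in the present source.
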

Further, if $R$ is specialised to a field $F$, the following theorem states exactly when $H(r,1,n)$ is semisimple:

\begin{thm} \label{ssth}
    (\cite{Ariki1994ss}, main theorem) Suppose $R$ is a field $F$. Then the cyclotomic Hecke algebra $H(r,1,n)$ is semisimple if and only if the parameters $q,v_1,v_2,\dots,v_r$ satisfy
    \begin{equation*}
        q^d v_i\neq v_j
    \end{equation*}
    for $i\neq j$ and $d\in\mathbb{Z}$ with $|d|<n$ and
    \begin{equation*}
        1+q+q^2+\dots+q^j\neq 0
    \end{equation*}
    for $1\leq j\leq n-1$.
\end{thm}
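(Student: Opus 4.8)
The plan is to prove the two implications separately, using the explicit seminormal realisation of the simple $H(K)$-modules from Theorem~\ref{AK}. \emph{Sufficiency.} Assume the two families of inequalities hold in $F$. For each $r$-partition $\lambda$ of $n$ I would take $\mathbb{V}_\lambda^F$ to be the $F$-span of the standard tableaux of shape $\lambda$ with the action obtained by transcribing verbatim the formulas of Theorem~\ref{AK}, the parameters now lying in $F$, and then check that $\mathbb{V}_\lambda^F$ is (a)~well defined, (b)~irreducible, and (c)~non-isomorphic to $\mathbb{V}_\mu^F$ for every $\mu\neq\lambda$. Granting (a)--(c), the standard identity $\sum_{\lambda\vdash n}f_\lambda^{2}=r^{n}n!=\dim_F H(r,1,n)$ forces the natural map $H(r,1,n)\to\bigoplus_\lambda\operatorname{End}_F(\mathbb{V}_\lambda^F)$ to be a surjection between algebras of equal dimension, hence an isomorphism, so $H(r,1,n)$ is split semisimple.

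The content of (a)--(c) is a bookkeeping of the factors $\Delta(k,x)=1-q^{k}x$ that occur. Every matrix $M(k,x)$ of \eqref{dfm} showing up in the action of some $T_i$ on $\mathbb{V}_\lambda^F$ has $k=c(\lambda:i)-c(\lambda:i+1)$ and $x=v_{\tau(i)}/v_{\tau(i+1)}$ for some standard tableau with $i,i+1$ in neither the same row nor the same column; a box count bounds the axial distance by $|k|\le n-2$ if $i,i+1$ lie in different components of $\lambda$, and by $2\le|k|\le n-1$ if they lie in the same component. For such an $M(k,x)$ to be defined and not triangular — triangularity would give a proper submodule of $\mathbb{V}_\lambda^F$ — its denominator $\Delta(k,x)$ and its off-diagonal entries $\Delta(k+1,x)$ and $q\Delta(k-1,x)$ must all be non-zero; since $|k\pm1|\le n-1$ this is exactly the requirement that $\Delta(d,x)\neq0$ for all $|d|<n$. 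For $i,i+1$ in different components this reads $q^{d}v_{\tau(i)}\neq v_{\tau(i+1)}$ with $|d|<n$, the first hypothesis; for $i,i+1$ in the same component it reads $1+q+\cdots+q^{m-1}\neq0$ for $1\le m\le n$, which is the second hypothesis (the boundary value $q=1$, where the multiplicative $M(k,x)$ degenerate and must be replaced by their additive Young-type limits, being constrained by the very same inequality, with the residual $\operatorname{char}F\mid n$ case hiding inside it). The same inputs yield (c): equality of the $T_0$- and Jucys--Murphy-eigenvalue tuples $\bigl(q^{c(\lambda:\gamma_k)}v_{\tau(k)}\bigr)_k$ (with $\gamma_k$ the node occupied by $k$) of a standard tableau of $\lambda$ and one of $\mu$ would entail a forbidden relation of one of the two types, so these tuples separate all standard tableaux of all shapes and $\lambda=\mu$.

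\emph{Necessity.} Conversely, suppose one of the inequalities fails. Since $H(r,1,n)$ is cellular, it is semisimple if and only if each of its cell modules is simple, so it suffices to exhibit a reducible cell module. When $1+q+\cdots+q^{j}=0$ for some $1\le j\le n-1$, the type-$A_{n-1}$ Hecke subalgebra $H_n(q)=\langle T_1,\dots,T_{n-1}\rangle$ is already non-semisimple — a suitable Specht module has non-zero radical — and a non-split extension of $H_n(q)$-modules induces one of $H(r,1,n)$-modules along the free (Frobenius) extension $H_n(q)\subseteq H(r,1,n)$. When $q^{d}v_i=v_j$ with $i\neq j$ and $0\le d<n$, I would exhibit the failure directly from Theorem~\ref{AK}: for a suitable two-component $r$-partition $\mu$ of some $m\le n$, supported in components $i$ and $j$ and with boxes of $\mu^{(i)}$ and $\mu^{(j)}$ at axial distance $d\pm1$, the off-diagonal entry $\Delta(d,v_i/v_j)$ of the relevant $M(k,v_i/v_j)$ vanishes, so that $M$ becomes triangular and $\mathbb{V}_\mu^F$ has a proper submodule isomorphic to $\mathbb{V}_{\mu'}^F$ for some smaller $\mu'$; this reducible $H(r,1,m)$-module then propagates up to $H(r,1,n)$ as before. (Equivalently, one may quote the product formula for the Schur elements, or the Gram determinants of the cell modules, of $H(r,1,n)$: each is a monomial in the quantities $q^{d}v_a-v_b$ with $|d|<n$ and $1+q+\cdots+q^{m}$ with $m\le n-1$, so it is a unit of $F$ exactly when both families of inequalities hold.)

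\emph{Main obstacle.} Given Theorem~\ref{AK}, the sufficiency direction is essentially mechanical once one notices that the decisive failure mode is triangularity of the $M(k,x)$ — not merely vanishing of their denominators — and once the elementary axial-distance bound $|k|\le n-1$ is in place; the only delicate point is the degenerate parameter $q=1$, which forces the passage to additive Young forms and exposes small-characteristic phenomena folded into $1+q+\cdots+q^{j}=0$. The harder direction is necessity: exhibiting a concrete reducible module on exactly the locus where one of the two families of inequalities breaks down, and nowhere else, demands a careful analysis of which axial distances and which content-and-component coincidences can actually occur among standard tableaux of $r$-partitions of $n$, together with a clean propagation of non-semisimplicity along the tower $H(r,1,1)\subseteq H(r,1,2)\subseteq\cdots\subseteq H(r,1,n)$.
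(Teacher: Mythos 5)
The paper offers no proof of Theorem~\ref{ssth}: it is quoted verbatim from Ariki's 1994 paper, so there is nothing internal to compare against. Judged on its own, your sufficiency half is essentially the standard Ariki--Koike/Ariki argument (seminormal matrices over $F$, the axial-distance bound $|k|\le n-2$ across components so that $|k\pm 1|\le n-1$, separation of the $T_0$/content data, and the count $\sum_\lambda f_\lambda^2=r^nn!$), and as a sketch it is sound, with two caveats: ``triangularity would give a proper submodule'' is not by itself the irreducibility argument --- you need the eigenvalue-separation step \emph{inside} a single $\mathbb{V}_\lambda^F$ to know that any submodule is spanned by seminormal basis vectors, and then connectivity through non-vanishing off-diagonal entries; and the case $q=1$, where $\Delta(k,1)=0$ and the transcription of Theorem~\ref{AK} is literally undefined, is flagged but not handled.

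The genuine gap is the propagation step in the necessity direction. Freeness of $H(r,1,n)$ over $H_n(q)$ (or over $H(r,1,m)$, $m<n$) does not imply that semisimplicity descends to the subalgebra: the span of $1$ and $e_{12}$ in $M_2(k)$ is a copy of $k[x]/(x^2)$ over which $M_2(k)$ is free of rank $2$ on either side, yet $M_2(k)$ is semisimple and $k[x]/(x^2)$ is not. So ``a non-split extension of $H_n(q)$-modules induces one of $H(r,1,n)$-modules along the free (Frobenius) extension'' is not a valid principle as stated. To repair it you must also use that the parabolic subalgebra $A$ is an $(A,A)$-bimodule direct summand of $B=H(r,1,n)$ (visible from the basis $\{X_1^{a_1}\cdots X_n^{a_n}T_w\}$); then $\mathrm{Ext}^1_A(M,\mathrm{Res}\,X)\cong \mathrm{Ext}^1_B(\mathrm{Ind}\,M,X)=0$ together with $N$ being a summand of $\mathrm{Res}\,\mathrm{Ind}\,N$ kills $\mathrm{Ext}^1_A$ entirely, which is the implication you need --- or you can avoid the tower altogether by witnessing every forbidden relation with $|d|\le n-1$ inside a reducible cell module of $H(r,1,n)$ itself (possible because the off-diagonal entries $\Delta(k\pm1,x)$ reach $|d|=n-1$ even though axial distances only reach $n-2$). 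Two further points in the same half need tightening: a triangular $M(k,x)$ on one pair of tableaux does not by itself produce an invariant subspace, and ``reducible'' must be upgraded to ``a cell module with $\mathrm{rad}(\phi_\mu)\neq 0$'' before the Graham--Lehrer criterion applies; your parenthetical appeal to the Schur-element/Gram-determinant product formulas would of course settle everything, but it imports a result at least as deep as the theorem being proved.
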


In the case when the cyclotomic Hecke algebra $H(r,1,n)$ is semisimple, each one-dimensional module corresponds to a central idempotent in $H(r,1,n)$. We will use these idempotents to define our generalised Temperley-Lieb algebra $TL_{r,1,n}$ in the next section.

\subsection{The cyclotomic KLR algebras}$\label{2.1}$
To show the graded cellularity of our generalised Temperley-Lieb algebras, we shall make use of the cyclotomic KLR algebras, which are isomorphic to the cyclotomic Hecke algebras $H(r,1,n)$. In the next section, we will use this isomorphism to give a second interpretation of our generalised Temperley-Lieb algebra $TL_{r,1,n}$. 

The KLR algebra was first introduced independently by Khovanov and Lauda in \cite{khovanov2008diagrammatic} and by Rouquier in \cite{rouquier20082kacmoody} to categorify the quantum deformation of the universal enveloping algebra of the lower-triangular subalgebra of a Kac-Moody algebra $\mathfrak{g}$. Brundan and Kleshchev have shown in \cite{Brundan_2009} that the KLR algebras of type $A_{n-1}$ are isomorphic to the cyclotomic Hecke algebras corresponding to the complex reflection groups $G(r,1,n)$ or their rational degenerations.
In this section, we recall the special cyclotomic KLR algebra which is isomorphic to $H(r,1,n)$. We refer readers to \cite{Brundan_2009} for a broader discussion.

Let $F$ be a fixed ground field and $q \in F^*$. Let $e$ be the smallest positive integer such that $1+ q + \dots+ q ^{e-1} = 0$, setting $e := 0$ if no such integer exists. Let $\Gamma$ be the quiver with vertex set $I := \mathbb{Z}/e\mathbb{Z}$, and a directed edge from $i$ to $j$ if $j = i + 1$. Thus $\Gamma$ is the quiver of type $A_{\infty}$ if $e=0 $ or $A_{e-1}^{(1)}$, with the following orientation:

\begin{tikzcd}
  A_{\infty}   :   &  \dots   \arrow[r ] & -2 \arrow[r ] & -1\arrow[r ] &   0  \arrow[r ] &   1   \arrow[r ] &    2 \arrow[r ] &    \dots\\
 A_{e-1}^{(1)} : & 0\leftrightarrow 1 & & 0  \arrow[dl] & & 0 \arrow[r] & 1 \arrow[d]
&\dots\\
 & &1 \arrow{rr} & &\arrow[ul]  2&  
3 \arrow[r] \arrow[u] 
& 2
 \end{tikzcd}

Let $(c_{ij})$ be the Cartan matrix associated with $\Gamma$, so that
\begin{gather}
	c_{ij}=
	\begin{cases}
		2 &\textit{ if } i=j;\\
		0 &\textit{ if } i\neq j\pm 1;\\
		-1 & \textit{ if } e\neq 2\textit{ and } i=j\pm1;\\
		-2 & \textit{ if } e= 2\textit{ and } i=j\pm1.
	\end{cases}
\end{gather}
Let $\{\alpha_i|i\in I\}$ be the corresponding set of simple roots and $\{\Lambda_i|i\in I\}$ the fundamental weights. Let $( \;,\; )$ be the bilinear form determined by
\begin{gather*}
	(\alpha_i,\alpha_j)=c_{ij} \textit{ and }(\Lambda_i,\alpha_j)=\delta_{ij}.
\end{gather*}
Let $P_+=\oplus_{i\in I}\mathbb{N}\Lambda_i$ be the dominant weights in the weight lattice and  $Q_+=\oplus_{i\in I}\mathbb{N}\alpha_i$ be the positive roots. For $\Lambda\in P_+$, define the length of $\Lambda$, $l(\Lambda):=\sum_{i\in I}(\Lambda,\alpha_i)$ and for $\alpha\in Q_+$, let the height of $\alpha$ be $ht(\alpha):=\sum_{i\in I}(\alpha,\Lambda_i)$. Fix an $\alpha \in Q_+$, define $I^{\alpha}$ as follows:
\begin{equation}
	I^{\alpha}:=\{i\in I^{ht(\alpha)}|\alpha=\alpha_{i_1}+\alpha_{i_2}+\dots+\alpha_{i_{ht(\alpha)}}\}.
\end{equation}

Fixing a dominant weight $\Lambda$, we next recall the definition of the cyclotomic KLR algebra of type $A_{n-1}$ associated with the weight $\Lambda$. The following definition is originally from the work of Khovanov and Lauda in \cite{khovanov2008diagrammatic} and Rouquier in \cite{rouquier20082kacmoody}.

\begin{defn} $\label{klrdf}$
	The cyclotomic KLR algebra of type $A_{n-1}$ associated with the weight $\Lambda$ is defined as $\mathcal{R}_n^{\Lambda}:=\oplus_{ht(\alpha)=n}\mathcal{R}_{\alpha}^{\Lambda}$, where $\mathcal{R}_{\alpha}^{\Lambda}$ is the unital associative $F$-algebra with generators
	\begin{equation}
		\{\psi_1,\dots,\psi_{n-1}\}\cup\{y_1,y_2,\dots,y_n\}\cup \{e(i)|i\in I^{\alpha}\}
	\end{equation}
	subject to  the following relations
	\begin{equation}
		y_1^{(\Lambda,\alpha_{i_1})}e(i)=0,
	\end{equation}
	\begin{equation}
		e(i)e(j)=\delta_{ij}e(i),
	\end{equation}
	\begin{equation}
		\sum_{i\in I^{\alpha}}e(i)=1,
	\end{equation}
	\begin{equation} \label{18eq}
		e(i)y_r=y_r e(i),
	\end{equation} 
	\begin{equation} 
		\psi_r e(i)=e(s_r\circ i)\psi_r \label{sym},
	\end{equation} 
	\begin{equation}
		y_r y_s=y_sy_r,
	\end{equation} 
	\begin{equation}\label{21eq}
		\psi_r y_s=y_s \psi_r \textit{ if }s\neq r,r+1;
	\end{equation}
	\begin{equation} \label{22eq}
		\psi_r \psi_s=\psi_s \psi_r \textit{ if }|r-s|\neq 1;
	\end{equation}
	\begin{gather} \label{23eq}
		\psi_r y_{r+1}e(i)=
		\begin{cases}
			y_r \psi_r e(i)+e(i) &\textit{ if } i_r= i_{r+1};\\
			y_r \psi_r e(i) &\textit{ if } i_r\neq i_{r+1};
		\end{cases}
	\end{gather}
	\begin{gather} \label{24eq}
		y_{r+1} \psi_r e(i)=
		\begin{cases}
			\psi_r y_r e(i)+e(i) &\textit{ if } i_r= i_{r+1};\\
			\psi_r y_r e(i) &\textit{ if } i_r\neq i_{r+1};
		\end{cases}
	\end{gather}
	\begin{gather} \label{25eq}
		\psi_r^2e(i)=
		\begin{cases}
			0 &\textit{ if } i_r= i_{r+1};\\
			e(i) &\textit{ if } i_r\neq i_{r+1}\pm 1;\\
			(y_{r+1}-y_r)e(i) &\textit{ if } i_r\rightarrow i_{r+1};\\
			(y_r-y_{r+1})e(i) &\textit{ if } i_r\leftarrow i_{r+1};\\
			(y_{r+1}-y_r)(y_r-y_{r+1})e(i) &\textit{ if } i_r\leftrightarrow i_{r+1}.
		\end{cases}
	\end{gather}
	\begin{gather} \label{26eq}
		(\psi_r\psi_{r+1}\psi_r-\psi_{r+1}\psi_r\psi_{r+1})e(i)=
		\begin{cases}
			e(i) &\textit{ if } i_{r+2}=i_r\rightarrow i_{r+1};\\
			-e(i) &\textit{ if } i_{r+2}=i_r\leftarrow i_{r+1};\\
			(y_r-2y_{r+1}+y_{r+2})e(i) &\textit{ if } i_{r+2}=i_r\leftrightarrow i_{r+1};\\
			0 &\textit{ otherwise } .
		\end{cases}
	\end{gather}
	where $s_r\circ i$ in (\ref{sym}) is the natural action of $\mathfrak{S}_n$ on $I^n$ and the arrows in (\ref{25eq}) and (\ref{26eq}) are those in the quiver $\Gamma$.
\end{defn}

Note that all the relations above are homogeneous with respect to the following degree function on the generators:
\begin{equation}\label{degf}
	\text{deg } e(i)=0,\text{deg } y_r=2\text{ and  deg }\psi_se(i)=-c_{i_s,i_{s+1}}
\end{equation}
where $i\in I^n$,$1\leq r\leq n$ and $1\leq s\leq n-1$. Therefore, the cyclotomic KLR algebra defined above is $\mathbb{Z}$-graded with respect to the degree function in (\ref{degf}).

Let $\mathcal{H}_n$ be the affine Hecke algebra over $F$ corresponding to $\mathfrak{S}_n$, that is the $F$-algebra generated by $T_1,T_2,\dots,T_{n-1},X_1^{\pm1},\dots,X_n^{\pm1}$ subject to the following relations:
\begin{equation}
	X_rX_s=X_sX_r,qX_{r+1}=T_rX_rT_r,
\end{equation}
\begin{equation}
	T_rX_s=X_sT_r \text{ if }s\neq r,r+1,
\end{equation}
\begin{equation}
	(T_r-q)(T_r+1)=0, T_rT_{r+1}T_r=T_{r+1}T_rT_{r+1},
\end{equation}
\begin{equation}
	T_rT_s=T_sT_r \text{ if }|r-s|>1.
\end{equation}

Let $\mathcal{H}_n^{\Lambda}$ be the cyclotomic quotient of $\mathcal{H}_n$ corresponding to the weight $\Lambda$, that is
\begin{equation} \label{32eq}
	\mathcal{H}_n^{\Lambda}:=\mathcal{H}_n/\langle \prod_{i\in I}(X_1-q^i)^{(\Lambda,\alpha_i)}\rangle.
\end{equation}

Suppose that $M$ is a finite dimensional $\mathcal{H}_n^{\Lambda}$-module. Then $M$ may be regarded as a $\mathcal{H}$ module, and by \cite[Lemma 4.7]{Grojnowski1999AffineSC}, when $q\neq 1$, the eigenvalues of $X_r$ (for all $r$) are of form $q^i$ where $i\in I$. That implies $M$ decomposes as a direct sum of the joint generalised eigenspaces for $X_1,X_2,\dots,X_n$, that is $M=\oplus_{i\in I^n}M_i$ with
\begin{equation}
	M_i:=\{v\in M|(X_r-q^{i_{r}})^{dim(M)}v=0 ,\text{ for }1\leq r\leq n \}. \label{es}
\end{equation}
In particular, let $M$ be the regular $\mathcal{H}_n^{\Lambda}$ module. Then there is a set of pairwise orthogonal idempotents $\{e(i)|i\in I^n\}$ in $\mathcal{H}_n^{\Lambda}$ such that $M_i=e(i)M$. All but finitely many of the $e(i)$ are zero and their sum is the identity in $\mathcal{H}_n^{\Lambda}$. Fix an $\alpha\in Q_+$, let
\begin{equation}
	e_{\alpha}:=\sum_{i\in I^{\alpha}}e(i)\in \mathcal{H}_n^{\Lambda}. \label{ce}
\end{equation}
As a consequence of \cite{LYLE2007854} or \cite[Theorem 1]{Brundan2006CentersOD}, $e_{\alpha}$ is either zero or a primitive central idempotent in $\mathcal{H}_n^{\Lambda}$. Let
\begin{equation}
	\mathcal{H}_{\alpha}^{\Lambda}:=e_{\alpha}\mathcal{H}_n^{\Lambda}.
\end{equation}
Then $\mathcal{H}_{\alpha}^{\Lambda}$ is either zero or an indecomposable two-sided ideal of $\mathcal{H}_n^{\Lambda}$.

We will next introduce the isomorphism between $\mathcal{R}_{\alpha}^{\Lambda}$ and $\mathcal{H}_{\alpha}^{\Lambda}$ constructed by Brundan and Kleshchev in \cite{Brundan_2009} by defining elements in $\mathcal{H}_{\alpha}^{\Lambda}$ which satisfy the relations in $\mathcal{R}_{\alpha}^{\Lambda}$. Let $e(i)$ be the idempotent defined above. For $1\leq r\leq n$, define
\begin{equation}
	y_r:=\sum_{i\in I^{\alpha}}(1-q^{-i_r}X_r)e(i).
\end{equation}
And for $1\leq r\leq n$ and $i\in I^{\alpha}$ set
\begin{equation}
	y_r(i):=q^{i_r}(1-y_r)\in F[y_1,y_2,\dots,y_n].
\end{equation}
Define power series $P_r(i),Q_r(i)\in F[[y_r,y_{r-1}]]$ as follows:
\begin{gather}
	P_r(i):=
	\begin{cases}
		1 &\textit{ if } i_r= i_{r+1};\\
		(1-q)(1-y_r(i)y_{r+1}(i)^{-1})^{-1} &\textit{ if } i_r\neq i_{r+1}.
	\end{cases}
\end{gather}

\begin{gather}
	Q_r(i):=
	\begin{cases}
		1-q+qy_{r+1}-y_r &\text{ if } i_r= i_{r+1};\\
		(y_r(i)-qy_{r+1}(i))/ (y_r(i)-y_{r+1}(i))&\text{ if } i_r\neq i_{r+1}\pm 1;\\
		(y_r(i)-qy_{r+1}(i))/ (y_r(i)-y_{r+1}(i))^2 &\text{ if } i_r\rightarrow i_{r+1};\\
		q^{i_r} &\text{ if } i_r\leftarrow i_{r+1};\\
		q^{i_r}/ (y_r(i)-y_{r+1}(i)) &\text{ if } i_r\leftrightarrow i_{r+1}.
	\end{cases}
\end{gather}

Then for $1\leq r\leq n-1$, set 
\begin{equation}
	\psi_r:=\sum_{i\in I^{\alpha}}(T_r+P_r(i))Q_r(i)^{-1} e(i).
\end{equation}

We do not distinguish here between the generators of the cyclotomic KLR algebra and the corresponding elements in the cyclotomic Hecke algebra because of the following isomorphism theorem by Brundan and Kleshchev:
\begin{thm}$\label{iso thm}$
	(\cite[Theorem 1.1]{Brundan_2009}) Let $\alpha$ be an element in the positive root lattice $Q_+$, such that the two-sided ideal $\mathcal{H}_{\alpha}^{\Lambda}\neq 0$. The map $f: \mathcal{R}_{\alpha}^{\Lambda}\to \mathcal{H}_{\alpha}^{\Lambda}$ such that
	\begin{equation*}
		f(e(i))=e(i),f(y_r)=y_r\text{ and }f(\psi_s)=\psi_s
	\end{equation*}
	extends uniquely as an algebra isomorphism.
\end{thm}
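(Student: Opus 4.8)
This is Theorem 1.1 of \cite{Brundan_2009}; the plan is to follow the strategy of Brundan and Kleshchev. \emph{Uniqueness} of the extension is immediate, since $\mathcal{R}_\alpha^\Lambda$ is generated as an $F$-algebra by the $e(i)$, $y_r$ and $\psi_s$, so a homomorphism out of it is determined by its values on these elements. For \emph{existence}, one must check that the concretely-defined elements $e(i), y_r, \psi_r \in \mathcal{H}_\alpha^\Lambda$ satisfy every relation of Definition \ref{klrdf}. The idempotent relations, $y_ry_s = y_sy_r$ and $e(i)y_r = y_re(i)$ are immediate from the construction of the $e(i)$ as the joint spectral idempotents of the commuting family $X_1,\dots,X_n$ and of $y_r$ as a polynomial in $X_r$ on each block; $\psi_re(i) = e(s_r\circ i)\psi_r$ follows because $T_r$ carries the generalised $i$-eigenspace to the generalised $(s_r\circ i)$-eigenspace while $P_r(i), Q_r(i)$ are power series in $y_r, y_{r+1}$; and $y_1^{(\Lambda,\alpha_{i_1})}e(i)=0$ is the translation of the cyclotomic relation $\prod_i(X_1-q^i)^{(\Lambda,\alpha_i)}=0$ restricted to the block $e(i)$. (All the formal power series in the $y$'s make sense because each $y_r$ acts nilpotently on the finite-dimensional algebra $\mathcal{H}_n^\Lambda$.) The substantive relations --- the mixed relations (\ref{23eq}), (\ref{24eq}), the quadratic relation (\ref{25eq}) and the Khovanov--Lauda braid relation (\ref{26eq}) --- are then checked by a direct (and lengthy) computation inside $\mathcal{H}_n$, using the relation $qX_{r+1}=T_rX_rT_r$ and its consequences; the functions $P_r(i), Q_r(i)$ were engineered precisely so that these identities fall out.

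\emph{Surjectivity} follows from $X_re(i) = q^{i_r}(1-y_r)e(i)$ --- whence $X_r$ and, using nilpotency of $y_r$, also $X_r^{-1}$ are in the image --- together with $T_re(i) = (\psi_rQ_r(i)-P_r(i))e(i)$, which puts $T_r$ in the image once $Q_r(i)$ is seen to be invertible; since the $T_r$ and $X_r^{\pm1}$ generate $\mathcal{H}_\alpha^\Lambda$, the map $f$ is onto. \emph{Injectivity} is the crux, and here the plan is a dimension count: one shows that $\mathcal{R}_\alpha^\Lambda$ is spanned over $F$ by the monomials $\psi_w\,y_1^{a_1}\cdots y_n^{a_n}\,e(i)$ with $i\in I^\alpha$, $w\in\mathfrak{S}_n$ (and $\psi_w$ read off a fixed reduced expression) and the exponents $a_j$ bounded. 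The reduction to such monomials uses the commutation and mixed relations to move $\psi$'s to the left and $y$'s to the right, with the quadratic and braid relations controlling repeated or non-reduced $\psi$-words; boundedness of the $a_j$ comes from nilpotency of each $y_r$ in $\mathcal{R}_\alpha^\Lambda$, propagated from $y_1$ (killed by the cyclotomic relation) to all $y_r$ via (\ref{23eq}), (\ref{24eq}) and (\ref{26eq}). This yields $\dim_F\mathcal{R}_\alpha^\Lambda \le \dim_F\mathcal{H}_\alpha^\Lambda$, the latter being known from the standard basis of the cyclotomic Hecke algebra; combined with surjectivity and finite-dimensionality, $f$ must then be an isomorphism.

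The \emph{main obstacle} is twofold. First, verifying the quadratic and braid relations is genuinely delicate bookkeeping with the rational functions $P_r(i), Q_r(i)$ and their power-series expansions. Second, and more seriously, establishing the spanning set with bounded $y$-powers --- a straightening algorithm for the $\psi$-words together with nilpotency of the $y_r$ --- is the technical heart of the injectivity argument; without the matching dimension bound on $\mathcal{R}_\alpha^\Lambda$, the surjection cannot be upgraded to an isomorphism.
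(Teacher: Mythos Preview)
The paper does not prove this theorem; it is quoted from \cite{Brundan_2009} and used as a black box. So there is no ``paper's own proof'' to compare against, and your outline should be measured against Brundan--Kleshchev's original argument.

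Your sketch of existence and surjectivity is faithful to what they do. The point of departure is \emph{injectivity}. Brundan and Kleshchev do \emph{not} argue by a dimension count; instead they construct an explicit candidate inverse $g:\mathcal{H}_\alpha^\Lambda\to\mathcal{R}_\alpha^\Lambda$ (the formulae the present paper records around (\ref{isoinv})), verify that the images of the Hecke generators satisfy the Hecke relations, and then check that $f\circ g$ and $g\circ f$ fix generators. Your proposed route---spanning $\mathcal{R}_\alpha^\Lambda$ by bounded $\psi_w y^a e(i)$ monomials and comparing dimensions---has a genuine obstacle: at the level of the abstract presentation in Definition~\ref{klrdf}, nilpotency of \emph{all} $y_r$ (not just $y_1$) in $\mathcal{R}_\alpha^\Lambda$ is not available from the relations by an elementary propagation argument; finite-dimensionality of the cyclotomic KLR algebra was, historically, a \emph{consequence} of the Brundan--Kleshchev isomorphism rather than an input to it (independent proofs came later, e.g.\ via categorification or the work of Kang--Kashiwara). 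So as written your injectivity step is either circular or relies on substantial outside input you have not invoked. If you want a self-contained account in the spirit of \cite{Brundan_2009}, replace the dimension argument by the explicit inverse construction.
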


We identify $\mathcal{R}_{\alpha}^{\Lambda}$ and $\mathcal{H}_{\alpha}^{\Lambda}$ via $f$ in the rest of this paper. Moreover, the inverse of $f$ is given by
\begin{equation}
	f^{-1}(X_r)=\sum_{i\in I^{\alpha}}y_r(i)e(i)
\end{equation}
and
\begin{equation} \label{isoinv}
	f^{-1}(T_r)=\sum_{i\in I^{\alpha}}(\psi_rQ_r(i)-P_r(i))e(i).
\end{equation}
The cyclotomic Hecke algebra $\mathcal{H}_n^{\Lambda}$ is naturally a subalgebra of $\mathcal{H}_{n+1}^{\Lambda}$ and $\mathcal{H}_{n+1}^{\Lambda}$ can also be regarded as a free $\mathcal{H}_n^{\Lambda}$-module. This implies that we can define the usual restriction and induction functors between $Rep(\mathcal{R}_n^{\Lambda})$ and $Rep(\mathcal{R}_{n+1}^{\Lambda})$. These functors can be decomposed into $i$-restrictions $Res_i^{\Lambda}$ and $i$-inductions $Ind_i^{\Lambda}$, for $i\in I$, by projecting onto the blocks $\mathcal{R}_{\alpha}^{\Lambda}$ and $\mathcal{R}_{\alpha+\alpha_i}^{\Lambda}$ where $ht(\alpha)=n$. For each $i\in I$, define
\begin{equation}
	e_{n,i}:=\sum_{j\in I^n}e(j,i).
\end{equation}  
Let $f_i:\mathcal{R}_n^{\Lambda}\to \mathcal{R}_{n+1}^{\Lambda}$ be the embedding map given by:
\begin{equation}
	f_i(e(j))=e(j,i),f_i(y_r)=e_{n,i}y_r\text{ and } f_i(\psi_s)=e_{n,i}\psi_s,
\end{equation}
where $j\in I^n$, $1\leq r\leq n$ and $1\leq s\leq n-1$. It should be observed that this embedding is not unital.
The restriction and induction functors induced by $f_i$ are:
\begin{equation}
	\begin{aligned}
		Res_i^{\Lambda}:Rep(\mathcal{R}_{n+1}^{\Lambda}) &\mapsto Rep(\mathcal{R}_{n}^{\Lambda}) \\
		N&\mapsto Ne_{n,i}\\
		Ind_i^{\Lambda}:Rep(\mathcal{R}_{n}^{\Lambda}) &\mapsto Rep(\mathcal{R}_{n+1}^{\Lambda})\\
		M&\mapsto M\otimes_{\mathcal{R}_{n}^{\Lambda}} e_{n,i}\mathcal{R}_{n+1}^{\Lambda}. \label{EF}
	\end{aligned}
\end{equation}
The following lemma indicates how to extend a certain ideal $\mathcal{I}_n$ of $\mathcal{R}_{n}^{\Lambda}$ to an ideal of $\mathcal{R}_{n+1}^{\Lambda}$.

\begin{lem}
	Let $\alpha \in Q^+$ be an element of height $n$ and $\mathcal{I}_n=\mathcal{R}_{\alpha}^{\Lambda}$ be an indecomposable two-sided ideal of $\mathcal{R}_{n}^{\Lambda}$.
   Denote by $\mathcal{I}_{n+1}$ the two-sided ideal of $\mathcal{R}_{n+1}^{\Lambda}$ generated by $\mathcal{I}_n$. Then we have
   \begin{equation*}
   	\mathcal{I}_{n+1}=\langle \sum_{\mathbf{i}\in I^{\alpha},i\in I} e(\mathbf{i},i) \rangle_{\mathcal{H}_{n+1}^{\Lambda}}.
   \end{equation*}
	$\label{keylm}$
\end{lem}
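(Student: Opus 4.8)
The plan is to identify the ideal $\mathcal{I}_{n+1}$ by tracking what happens to the block idempotent $e_\alpha = \sum_{\mathbf{i}\in I^\alpha} e(\mathbf{i})$ when we pass from $\mathcal{R}_n^\Lambda$ to $\mathcal{R}_{n+1}^\Lambda$. The claimed generator $z := \sum_{\mathbf{i}\in I^\alpha,\, i\in I} e(\mathbf{i},i)$ is $e_\alpha$ viewed inside $\mathcal{R}_{n+1}^\Lambda$ via the (non-unital) embeddings $f_i$, summed over all $i\in I$; indeed $z = \sum_{i\in I} f_i(e_\alpha)$ and, since $z$ is a sum of orthogonal idempotents, $z$ is itself an idempotent in $\mathcal{R}_{n+1}^\Lambda$.

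First I would show $\langle z\rangle_{\mathcal{R}_{n+1}^\Lambda} \subseteq \mathcal{I}_{n+1}$. For this it suffices to check $z \in \mathcal{I}_{n+1}$, i.e. that each $e(\mathbf{i},i)$ with $\mathbf{i}\in I^\alpha$ lies in the two-sided ideal generated by $\mathcal{I}_n = \mathcal{R}_\alpha^\Lambda = e_\alpha \mathcal{R}_n^\Lambda$. Here I use the embedding $f_i\colon \mathcal{R}_n^\Lambda \to \mathcal{R}_{n+1}^\Lambda$: we have $f_i(e(\mathbf{i})) = e(\mathbf{i},i)$, and $f_i$ carries $\mathcal{R}_n^\Lambda$ into $e_{n,i}\mathcal{R}_{n+1}^\Lambda$. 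Since $e(\mathbf{i}) \in \mathcal{I}_n$ and the image $f_i(\mathcal{R}_n^\Lambda)$ lies in $\mathcal{R}_{n+1}^\Lambda$ (the relevant point being that $\mathcal{R}_{n+1}^\Lambda$ is generated over $f_i(\mathcal{R}_n^\Lambda)$ — equivalently over $\mathcal{H}_n^\Lambda\subseteq\mathcal{H}_{n+1}^\Lambda$ — by $X_{n+1}^{\pm1}$ and $T_n$), the element $e(\mathbf{i},i) = f_i(e(\mathbf{i}))$ is a product of an element of $\mathcal{I}_n$ (embedded) with identity, hence lies in the two-sided ideal of $\mathcal{R}_{n+1}^\Lambda$ generated by the image of $\mathcal{I}_n$. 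Summing over $\mathbf{i}$ and $i$ gives $z\in\mathcal{I}_{n+1}$.

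For the reverse inclusion $\mathcal{I}_{n+1}\subseteq\langle z\rangle$, I would show that the generating set $\mathcal{I}_n$ of $\mathcal{I}_{n+1}$ is already contained in $\langle z\rangle$. Any element of $\mathcal{I}_n = e_\alpha\mathcal{R}_n^\Lambda$, regarded inside $\mathcal{R}_{n+1}^\Lambda$ via the standard (unital on $\mathcal{H}$-level) inclusion $\mathcal{H}_n^\Lambda\hookrightarrow\mathcal{H}_{n+1}^\Lambda$, can be resolved along the complete set of orthogonal idempotents $\{e(\mathbf{j})\mid \mathbf{j}\in I^{n+1}\}$ of $\mathcal{R}_{n+1}^\Lambda$: multiplying $e_\alpha$ (which in $\mathcal{R}_{n+1}^\Lambda$ equals $1_{\mathcal{H}_n^\Lambda}$ restricted to the block $\alpha$) on the right by $\sum_{j\in I} e_{n,j} = 1$ shows that $e_\alpha = \sum_{j\in I} e_\alpha e_{n,j}$, and $e_\alpha e_{n,j} = \sum_{\mathbf{i}\in I^\alpha} e(\mathbf{i},j)$, so that $e_\alpha = z$ as elements of $\mathcal{R}_{n+1}^\Lambda$. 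Thus $\mathcal{I}_n \subseteq z\mathcal{R}_{n+1}^\Lambda \subseteq \langle z\rangle$, and the two-sided ideal generated by $\mathcal{I}_n$ is contained in $\langle z\rangle$.

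Combining the two inclusions gives $\mathcal{I}_{n+1} = \langle z\rangle_{\mathcal{R}_{n+1}^\Lambda} = \langle\sum_{\mathbf{i}\in I^\alpha, i\in I} e(\mathbf{i},i)\rangle$. I expect the main subtlety to be the bookkeeping in the previous paragraph: one must be careful that "$e_\alpha$ viewed in $\mathcal{R}_{n+1}^\Lambda$" is interpreted consistently — the natural inclusion $\mathcal{H}_n^\Lambda\hookrightarrow\mathcal{H}_{n+1}^\Lambda$ sends the identity $1_{\mathcal{H}_n^\Lambda}$ to $1_{\mathcal{H}_{n+1}^\Lambda}$, so it sends $e_\alpha$ not to $z$ naively but rather its image decomposes as $\sum_{j} e_\alpha e_{n,j}$, and one identifies this sum with $z$ using $e(\mathbf{i})\cdot e_{n,j} = e(\mathbf{i},j)$ (up to the orthogonality relations among the $e(\mathbf{j})$ in $\mathcal{R}_{n+1}^\Lambda$). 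Once this identification is pinned down, both inclusions are essentially formal manipulations with idempotents and the embeddings $f_i$.
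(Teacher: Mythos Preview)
Your approach is essentially the same as the paper's, and the core observation you isolate in your final paragraph---that the image of $e_\alpha$ under the unital inclusion $\mathcal{H}_n^\Lambda\hookrightarrow\mathcal{H}_{n+1}^\Lambda$ equals $z$---is precisely the content of the paper's proof. Once you have $e_\alpha = z$ as elements of $\mathcal{H}_{n+1}^\Lambda$, the equality of ideals $\langle e_\alpha\rangle = \langle z\rangle$ is immediate, so your two-inclusion structure is redundant: your first inclusion (via $f_i$) is muddled because it conflates the non-unital embeddings $f_i$ with the unital inclusion, and in fact it too secretly relies on the identity $e(\mathbf{i})\cdot e_{n,i}=e(\mathbf{i},i)$, which is exactly the identity driving your second inclusion.

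The one genuine gap is that you attribute the identity $e(\mathbf{i})\cdot e_{n,j}=e(\mathbf{i},j)$ to ``the orthogonality relations among the $e(\mathbf{j})$ in $\mathcal{R}_{n+1}^\Lambda$''. Orthogonality of the $e(\mathbf{j})$ for $\mathbf{j}\in I^{n+1}$ alone does not tell you how $e(\mathbf{i})$ (for $\mathbf{i}\in I^n$, viewed in $\mathcal{H}_{n+1}^\Lambda$) decomposes against them. The paper supplies this: regarding $\mathcal{H}_{n+1}^\Lambda$ as a left $\mathcal{H}_n^\Lambda$-module, one sees that $e(\mathbf{j})$ lies in the generalised $(X_1,\dots,X_n)$-eigenspace with eigenvalues $(q^{j_1},\dots,q^{j_n})$, hence $e(\mathbf{j})=e(\mathbf{j}^{(n)})h$ for some $h$, and then orthogonality of the level-$n$ idempotents gives $e(\mathbf{i})e(\mathbf{j})=e(\mathbf{j})$ if $\mathbf{j}$ extends $\mathbf{i}$ and $0$ otherwise. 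With that in hand, your computation $e_\alpha=\sum_j e_\alpha e_{n,j}=\sum_{\mathbf{i}\in I^\alpha,\,j\in I} e(\mathbf{i},j)=z$ goes through and the lemma follows in one line.
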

\begin{proof}
	By Theorem $\ref{iso thm}$, $\mathcal{R}_{\alpha}^{\Lambda}\cong \mathcal{H}_{\alpha}^{\Lambda}$. We can identify $\mathcal{R}_{\alpha}^{\Lambda}$ and $ \mathcal{H}_{\alpha}^{\Lambda}$. For $\mathbf{i}\in I^{n}$ ($\mathbf{j}\in I^{n+1}$), let $e(\mathbf{i})$ ($e(\mathbf{j})$) be the idempotent in $\mathcal{H}_{n}^{\Lambda}$ ($\mathcal{H}_{n+1}^{\Lambda}$) which corresponds to the generalised eigenspaces $M_{\mathbf{i}}$ ($N_{\mathbf{j}}$) in ($\ref{es}$) of the regular left module. We first prove the following equation:
	\begin{equation}
		e(\mathbf{i})e(\mathbf{j})=
		\begin{cases}
			e(\mathbf{j}) &\textit{ if }\mathbf{j}=(\mathbf{i},i) \textit{ for some }i\in I;\\
			0 &\textit{otherwise}.
		\end{cases}\label{orth}
	\end{equation}
	Let $\mathbf{j}=(j_1,j_2,\dots,j_{n+1})$ and $\mathbf{j}^{(n)}=(j_1,j_2,\dots,j_{n})$. By the definition of $e(\mathbf{j})$, $(X_r-q^{j_{r}})^{dim(\mathcal{H}_{n+1}^{\Lambda})}e(\mathbf{j})=0 ,\text{ for }1\leq r\leq n+1$. Regard $\mathcal{H}_{n+1}^{\Lambda}$ as a left $\mathcal{H}_{n}^{\Lambda}$-module, $M^{(n+1)}$. Then $e(\mathbf{j})\in M_{\mathbf{j}^{(n)}}^{(n+1)}$. By definition, $M_{\mathbf{j}^{(n)}}^{(n+1)}=e({\mathbf{j}^{(n)}})M^{(n+1)}$. So $e(\mathbf{j})=e({\mathbf{j}^{(n)}})h$ for some $h\in \mathcal{H}_{n+1}^{\Lambda}$. As $e(\mathbf{i})$'s are pairwise orthogonal idempotents, $e(\mathbf{i})e({\mathbf{j}^{(n)}})$ is $e(\mathbf{j}^{(n)})$ (when $\mathbf{i}=\mathbf{j}^{(n)}$) or zero, which implies ($\ref{orth}$).

	 Let $e_{\alpha}\in \mathcal{H}_{n}^{\Lambda}$ be the primitive central idempotent such that $\mathcal{H}_{\alpha}^{\Lambda}=e_{\alpha}\mathcal{H}_{n}^{\Lambda}$. Then we have 
\begin{equation*}
        \begin{aligned}
    	\mathcal{I}_{n+1}&=\langle \mathcal{I}_n \rangle_{\mathcal{R}_{n+1}^{\Lambda}}\\
    	&=\langle e_{\alpha}\mathcal{H}_{n}^{\Lambda} \rangle_{\mathcal{H}_{n+1}^{\Lambda}}\\
    	&=\langle e_{\alpha} \rangle_{\mathcal{H}_{n+1}^{\Lambda}}\\
    	&= \langle \sum_{\mathbf{i}\in I^{\alpha}} e(\mathbf{i})\sum_{\mathbf{j}\in I^{n+1}} e(\mathbf{j}) \rangle_{\mathcal{H}_{n+1}^{\Lambda}}\\
    	&= \langle \sum_{\mathbf{i}\in I^{\alpha},i\in I} e(\mathbf{i},i) \rangle_{\mathcal{H}_{n+1}^{\Lambda}}.
    \end{aligned}
\end{equation*}

    \end{proof}

\section{Generalised Temperley-Lieb algebras $TL_{r,1,n}$}
In this section, we define our generalised Temperley-Lieb algebra $TL_{r,1,n}$ as a quotient of the corresponding cyclotomic Hecke algebra $H(r,1,n)$. We will show that the Temperley-Lieb algebras of types $A_{n-1}$ and $B_n$ are both special cases of $TL_{r,1,n}$. We will also give an interpretation of our generalised Temperley-Lieb algebra with respect to the KLR generators. This interpretation will be used to construct the graded cellular structure of $TL_{r,1,n}$ in the next section.
In what follows, we require $R$ to be a field of characteristic 0 and $q,v_1,v_2,\dots,v_r\in R^*$ such that 
\begin{equation}\label{pmres}
    \frac{v_i}{v_j}\neq 1,q\textbf{ or } q^2
\end{equation}
for all $i\neq j$ and
\begin{equation}\label{pmres1}
    (1+q)(1+q+q^2)\neq 0.
\end{equation}
These restrictions on $q,v_1,v_2,\dots,v_r\in R^*$ are equivalent to the semi-simplicity of the parabolic subalgebras $H_{i,i+1}$ mentioned in the introduction by Theorem \ref{ssth}.
\subsection{The definition of $TL_{r,1,n}$ as a quotient of the cyclotomic Hecke algebra} \label{3.1}
Let $H(r,1,n)$ be the cyclotomic Hecke algebra defined in section $\ref{DF}$ and $H_{i,i+1}$ be the subalgebra of $H(r,1,n)$ which is generated by two non-commuting generators, $T_{i}$ and $T_{i+1}$ where $0\leq i\leq n-2$. We call $H_{i,i+1}$ a parabolic subalgebra of $H(r,1,n)$. There are two types of parabolic subalgebras:

Case 1. $i=0$. $H_{0,1}$ is a cyclotomic Hecke algebra corresponding to $G(r,1,2)$. By Theorem \ref{ssth}, it is semisimple given the restriction on parameters in $(\ref{pmres})$ and $(\ref{pmres1})$. It has $2r$ 1-dimensional representations corresponding to the multipartitions $(0,0,\dots,(2),\dots,0)$ and $(0,0,\dots,(1,1),\dots,0)$ where $(2)$ and $(1,1)$ are in the $j^{th}$ component of the $r$-partition with $1\leq j \leq r$. Denote by $E_0^{(j)}$ and $F_0^{(j)}$ the corresponding primitive central idempotents in $H_{0,1}$.
	
Case 2. $1\leq i\leq n-2$. $H_{i,i+1}$ is a Hecke algebra of type $A_2$. It has 2 1-dimensional representations and the corresponding primitive central idempotents are
\begin{equation*}
  \begin{aligned}
	E_i&=a(T_iT_{i+1}T_i+T_iT_{i+1}+T_{i+1}T_i+T_i+T_{i+1}+1);\\
	F_i&=b(T_iT_{i+1}T_i-qT_iT_{i+1}-qT_{i+1}T_i+q^2T_i+q^2T_{i+1}-q^3),
\end{aligned}  
\end{equation*}

where $a=(q^3+2q^2+2q+1)^{-1}$ and $b=-a$.
As a generalisation of the original Temperley-Lieb algebra, $TL_{r,1,n}$ is defined as the quotient of $H(r,1,n)$ by the two-sided ideal generated by half of the central idempotents listed above, that is:

\begin{defn}\label{TL}
	Let $H(r,1,n)$ be the cyclotomic Hecke algebra defined in \ref{Hr} where $R$ is a field of characteristic 0 and $q,v_1,v_2,\dots,v_r\in R^*$ satisfy (\ref{pmres}) and (\ref{pmres1}).
	Let $H_{i,i+1}$ be the parabolic subalgebra generated by $T_i$ and $T_{i+1}$ and $E_i$( $E_i^{(j)}$, if $i=0$) be the primitive central idempotents of $H_{i,i+1}$ listed above. The generalised Temperley-Lieb algebra $TL_{r,1,n}$ is
	\begin{equation*}
		TL_{r,1,n}:=H(r,1,n)/\langle E_0^{(1)},\dots,E_0^{(r)},E_1,\dots,E_{n-2}\rangle.
	\end{equation*}
    
\end{defn}
It should be remarked that the generalised Temperley-Lieb algebra depends heavily on the parameters $q,v_1,v_2,\dots,v_r$. To make the notations simpler, we omit the parameters without causing confusion.
The next two examples show that the usual Temperley-Lieb algebra $TL_n(q)$ and the blob algebra $TLB_n(q,Q)$ are special cases of $TL_{r,1,n}$ where $r=1,2$.
\begin{exmp}
	$r=1$. In this case, $T_0$ is an element in $R$, thus $T_0$ and $T_1$ commutative. So $H_{0,1}$ is no longer a parabolic subalgebra in this case. Therefore, $TL_{1,n}=H_{n-1}/\langle E_1,\dots,E_{n-2}\rangle=H_{n-1}/\langle E_1\rangle=TL_{n-1}(q)$.
\end{exmp}

\begin{exmp}
  $r=2$. Let $v_1=Q$ and $v_2=-Q^{-1}$. We have $H(2,1,n)=HB_n(Q,q)$. We next show $TL_{2,n}=TLB(Q,q)$ when $1+Q^2\in R^*$. 
  In Lemma 5.1 of \cite{GRAHAM2003479}, Graham and Lehrer point out that $C_0X=XC_0=u^{-2}Q^{-2}E_0^{(1)}$ which implies $E_0^{(1)}\in \langle X\rangle$ as $u,Q$ are invertible. We observe that
\begin{equation*}
  \begin{aligned}
	&E_0^{(1)}-Q^4E_0^{(2)}\\
	=&(u^2Q^2-u^2Q^2)T_2T_0T_2T_0+(u^2Q+u^2Q^3)T_2T_0T_2+(uQ^2-uQ^2)T_0T_2T_0\\&+(uQ+uQ^3)(T_0T_2+T_2T_0)+(Q+Q^3)T_0+(u-uQ^4)T_2+1-Q^4\\
	=&(u^2Q+u^2Q^3)T_2T_0T_2+(uQ+uQ^3)(T_0T_2+T_2T_0)+(Q+Q^3)T_0\\
	&+(u-uQ^4)T_2+1-Q^4\\
	=&(1+Q^2)(u^2QT_2T_0T_2+uQT_0T_2+uQT_2T_0+QT_0+u(1-Q^2)T_2+1-Q^2)\\
	=&(1+Q^2)(uT_2+1)(QT_0+1)(uT_2+1)-(1+Q^2)(u^2+u^3T_2+uQ^2T_2+Q^2)\\
	=&-u^2Q(1+Q^2)C_2C_0C_2+u^2Q(1+Q^2)\kappa C_1\\
	=&-u^2Q^2(1+Q^2)X.
\end{aligned}  
\end{equation*}

As $Q,u$ are invertible in $R$, we have:
\begin{equation}
	\langle (1+Q^2)X \rangle\subseteq\langle E_0^{(1)},E_0^{(2)} \rangle.
\end{equation}
On the other hand, the equation above implies
\begin{equation*}
    Q^4E_0^{(2)}=E_0^{(1)}+u^2Q^2(1+Q^2)X.
\end{equation*}
So we have
\begin{equation*}
    \langle E_0^{(1)},E_0^{(2)} \rangle \subseteq 	\langle E_0^{(1)},(1+Q^2)X \rangle \subseteq  \langle X\rangle.
\end{equation*}
Since $1+Q^2\in R^*$, we have $\langle X\rangle = \langle E_0^{(1)},E_0^{(2)}\rangle$.
  So we have 
  \begin{equation*}
  \begin{aligned}
	TL_{2,n}&=H(2,1,n)/\langle E_0^{(1)},E_0^{(2)},E_1,\dots,E_{n-2}\rangle\\
	&=HB_n(Q,q)/\langle E_0^{(1)},E_0^{(2)},E_1\rangle\\
	&=HB_n(Q,q)/\langle X, E_1\rangle\\
	&=TLB_n(Q,q).
\end{aligned}      
  \end{equation*}

\end{exmp}

We next show that the generalised Temperley-Lieb algebra $TL_{r,1,n}$ is a quotient of the corresponding generalised blob algebra $B(r,n)$. Let $\mathcal{I}_{1,n}$ be the two-sided ideal of $H(r,1,n)$ generated by $E_0^{(i)}$ for all $i=1,2,\dots,r$. The generalised blob algebra was originally defined by Martin and Woodcock in \cite{martin_woodcock_2003} as follows:
\begin{defn}
	(\cite{martin_woodcock_2003}, Section 3) Let $\mathcal{I}_{1,n}$ be the two-sided ideal of $H(r,1,n)$ generated by $E_0^{(i)}$ for all $i=1,2,\dots,r$. The generalised blob algebra $B_{r,n}$ is defined as the quotient of $H(r,1,n)$ by $\mathcal{I}_{1,n}$, that is
	\begin{equation}
	    B_{r,n}=H(r,1,n)/\mathcal{I}_{1,n}.
	\end{equation}
\end{defn}

Denote by $\mathcal{I}_{2,n}$ the ideal of $B_{r,n}$ generated by the representatives of $E_1,\dots,E_{n-2}$. The following lemma is a direct consequence of definitions:
\begin{lem}
   $TL_{r,1,n}\cong  B_{r,n}/\mathcal{I}_{2,n}$.
   
\end{lem}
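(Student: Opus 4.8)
The plan is to deduce the isomorphism from the standard correspondence between two-sided ideals of a quotient ring and two-sided ideals of the ambient ring containing the kernel — in other words, from the third isomorphism theorem applied to the chain of quotients $H(r,1,n)\twoheadrightarrow B_{r,n}\twoheadrightarrow B_{r,n}/\mathcal{I}_{2,n}$.

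First I would let $\pi\colon H(r,1,n)\to B_{r,n}$ denote the canonical surjection, so that $\ker\pi=\mathcal{I}_{1,n}=\langle E_0^{(1)},\dots,E_0^{(r)}\rangle$. By definition $\mathcal{I}_{2,n}$ is the two-sided ideal of $B_{r,n}$ generated by the elements $\pi(E_1),\dots,\pi(E_{n-2})$. The one point to record is the elementary fact that, for a surjection of unital rings, the preimage of the ideal generated by a set $\{\pi(x_1),\dots,\pi(x_k)\}$ is exactly the ideal generated by $\ker\pi$ together with $x_1,\dots,x_k$. Applying this here gives
\[
\pi^{-1}(\mathcal{I}_{2,n})=\langle E_0^{(1)},\dots,E_0^{(r)},E_1,\dots,E_{n-2}\rangle,
\]
the ideal being taken in $H(r,1,n)$.

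Then I would invoke the third isomorphism theorem: since $\mathcal{I}_{1,n}=\ker\pi\subseteq\pi^{-1}(\mathcal{I}_{2,n})$, the image $\pi(\pi^{-1}(\mathcal{I}_{2,n}))=\mathcal{I}_{2,n}$ and
\[
B_{r,n}/\mathcal{I}_{2,n}=\bigl(H(r,1,n)/\mathcal{I}_{1,n}\bigr)\big/\bigl(\pi^{-1}(\mathcal{I}_{2,n})/\mathcal{I}_{1,n}\bigr)\;\cong\;H(r,1,n)\big/\pi^{-1}(\mathcal{I}_{2,n}),
\]
and the right-hand side is precisely $TL_{r,1,n}$ by Definition \ref{TL}.

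The statement is entirely formal, so there is essentially no obstacle. The only things warranting a line of care are: that the generators of $\mathcal{I}_{2,n}$ really are the images under $\pi$ of the idempotents $E_1,\dots,E_{n-2}\in H(r,1,n)$ (which holds by construction, these being ``the representatives of $E_1,\dots,E_{n-2}$''), and that the ideal appearing inside $TL_{r,1,n}$ and the ideal $\pi^{-1}(\mathcal{I}_{2,n})$ are generated over the same ambient algebra $H(r,1,n)$, which makes the final identification of quotients immediate.
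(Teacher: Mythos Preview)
Your proof is correct and is exactly what the paper has in mind: the lemma is stated there as ``a direct consequence of definitions'' with no further argument, and your application of the third isomorphism theorem simply spells out that consequence. There is nothing to add.
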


\subsection{Realisation of $TL_{r,1,n}$ as a quotient of the cyclotomic KLR algebra}
We next give an interpretation of the generalised Temperley-Lieb algebra $TL_{r,1,n}$ as a quotient of the cyclotomic KLR algebra $\mathcal{R}_n^{\Lambda}$. This interpretation will be used to construct our graded cellular basis of $TL_{r,1,n}$ in the next section.

In similar fashion to the generalised blob algebra $B_{r,n}$, $TL_{r,1,n}$ is defined with respect to a dominant weight $\Lambda$. We will use a specific notation $TL_{r,1,n}^{\Lambda}$ to indicate which dominant weight we are using if necessary. By definition, the condition (\ref{pmres}) can be transformed into the following restriction:
\begin{equation}\label{reslmd}
    \Lambda=\Lambda_{i_1}+\Lambda_{i_2}+\dots+\Lambda_{i_r}\in P^+\textit{ where }|i_a-i_b|>2\textit{ for }a\neq b.
\end{equation}

When $n=3$, $H(r,1,3)$ is semisimple by Theorem \ref{ssth}. The following lemma gives a realisation of $TL_{r,1,3}$ as a quotient of the cyclotomic KLR algebra $\mathcal{R}_3^{\Lambda}$, which is the cornerstone of the interpretation of $TL_{r,1,n}$ into the language of KLR generators.
\begin{lem}
    Let $TL_{r,1,3}$ be the generalised Temperley-Lieb algebra defined in Definition \ref{TL} and $\mathcal{R}_3^{\Lambda}$ be the cyclotomic KLR algebra isomorphic to $H(r,1,3)$ indicated in Theorem $\ref{iso thm}$. Then
    \begin{equation}\label{tlr13}
        TL_{r,1,3}\cong \mathcal{R}_3^{\Lambda}/\mathcal{J}_{3}
    \end{equation}
    where
\begin{equation}
	\begin{aligned}
		\mathcal{J}_{3}=&\sum_{j\in I,(\alpha_i,\Lambda)>0}\langle  e(i,i+1,j) \rangle_{\mathcal{R}_{3}^{\Lambda}}\\
&+\sum_{(\alpha_{i_j},\Lambda)>0\text{ for }j=1,2,3}\langle  e(i_1,i_2,i_3) \rangle_{\mathcal{R}_{3}^{\Lambda}}.
	\end{aligned}	
\end{equation}
\end{lem}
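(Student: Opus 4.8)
The plan is to translate the two-sided ideal $\langle E_0^{(1)},\dots,E_0^{(r)},E_1\rangle$ of $H(r,1,3)$ across the Brundan--Kleshchev isomorphism $f\colon \mathcal{R}_3^{\Lambda}\xrightarrow{\sim} H(r,1,3)$ of Theorem \ref{iso thm}, and to identify the image with the ideal $\mathcal{J}_3$. Since $H(r,1,3)$ is semisimple (Theorem \ref{ssth}, using the parameter restrictions (\ref{pmres}), (\ref{pmres1}), equivalently (\ref{reslmd})), each of the relevant idempotents is a sum of primitive central idempotents indexed by irreducible modules, i.e. by $3$-partitions of $3$, via Theorem \ref{AK}. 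So the first step is to determine, for each central idempotent $Z$ appearing as a generator of the ideal (namely $E_0^{(j)}$ and $E_1$, noting $E_1$ is the only $E_i$ since $n-2=1$), which multipartitions $\lambda$ satisfy $Z\cdot \mathbb{V}_\lambda\neq 0$. By the explicit action in Theorem \ref{AK}: $E_1$ (the ``trivial'' idempotent of the type-$A_2$ Hecke subalgebra $H_{1,2}$) is supported on those $\lambda$ admitting a standard tableau in which $2,3$ lie in the same row of the same component, i.e. on every $\lambda$ with a horizontal domino in one component; and $E_0^{(j)}$ (the idempotent of $H_{0,1}$ on which $T_0$ acts by $v_j$ on $1$ and on which $T_1$ acts trivially — the $((2)$-in-component-$j)$ representation) is supported exactly on the multipartitions $\lambda$ having a tableau with $1,2$ in the same row of component $j$. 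Summing over $j$, the full ideal $\langle E_0^{(1)},\dots,E_0^{(r)},E_1\rangle$ is the span of those primitive central idempotents $e_\lambda$ for which $\lambda$ has \emph{some} non-one-column component (some component containing a horizontal pair of adjacent boxes). Equivalently, the quotient $TL_{r,1,3}$ has as simple modules exactly the $\mathbb{V}_\lambda$ with $\lambda$ a one-column $3$-partition of $3$ — which, since $n=3$, means $\lambda$ has at most two non-empty components; this matches the intended combinatorics of the cellular basis.

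Next I would carry out the same analysis on the KLR side. Under the grading and the decomposition $\mathcal{R}_3^{\Lambda}=\bigoplus_{ht(\alpha)=3}\mathcal{R}_\alpha^{\Lambda}$, each idempotent $e(i_1,i_2,i_3)$ projects onto a weight space, and for a simple module $\mathbb{V}_\lambda$ the dimension of $e(\mathbf{i})\mathbb{V}_\lambda$ counts standard tableaux of shape $\lambda$ with residue sequence $\mathbf{i}$ (residues read off via the content map $c(\lambda:\gamma)$ modulo $e$, translated to $I$ using the chosen $\Lambda=\Lambda_{i_1}+\cdots+\Lambda_{i_r}$). The two families generating $\mathcal{J}_3$ are: (a) $e(i,i+1,j)$ with $(\alpha_i,\Lambda)>0$ and $j\in I$ arbitrary — these are exactly the idempotents hitting a tableau whose boxes $1$ and $2$ have consecutive residues $i,i+1$ starting from a box added at the top of an empty component (the first box of any component has residue $i_a$ with $(\alpha_{i_a},\Lambda)>0$), i.e. $1$ and $2$ sitting in the same component in a horizontal pair; and (b) $e(i_1,i_2,i_3)$ with all three residues of the form ``first box of a component'', which forces all three boxes $1,2,3$ to start distinct empty components, i.e. $\lambda$ has three non-empty (necessarily singleton) components. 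Thus $\mathcal{J}_3$ is spanned by the $e_\lambda$ (primitive central, since $\mathcal{R}_3^\Lambda$ is semisimple and identified block-by-block via Theorem \ref{iso thm}) for precisely the same set of $\lambda$ — those with a horizontal domino, together with the all-singleton $\lambda$ which is the $n=3$ manifestation of ``more than two components.'' Hence $f(\mathcal{J}_3)=\langle E_0^{(1)},\dots,E_0^{(r)},E_1\rangle$, giving the isomorphism (\ref{tlr13}).

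The concrete mechanics are: (i) verify that $f$ (equivalently $f^{-1}$, given explicitly by the formulas for $f^{-1}(X_r)$, $f^{-1}(T_r)$) carries the primitive central idempotent $e_\lambda\in H(r,1,3)$ to the primitive central idempotent $e_\lambda\in\mathcal{R}_3^\Lambda$ for each $\lambda$ — this is immediate from Theorem \ref{iso thm} because $f$ is a block-preserving isomorphism, so it suffices to match blocks, which are labelled the same way by the $\alpha\in Q_+$ and by central characters; (ii) check the two residue-combinatorics claims above by a direct inspection of the (very short) list of standard tableaux on $3$ boxes, using that the content of the first box of component $l$ is $0$ relative to that component and that distinct components sit at ``charges'' $i_1,\dots,i_r$ with pairwise gaps $>2$, so residues of boxes in distinct components never collide or become adjacent; (iii) conclude that the span of the generators of $\mathcal{J}_3$ equals the span of the correct set of $e_\lambda$, and likewise for the Hecke-side ideal, using semisimplicity so that ``ideal generated by a set of central idempotents'' $=$ ``span of the primitive central idempotents below them.'' The main obstacle I anticipate is book-keeping in step (ii): correctly handling the case distinctions in the residue assignment — in particular making sure that the condition ``$(\alpha_i,\Lambda)>0$'' genuinely characterises ``$i$ is the residue of a first box of some component'' and that the family $e(i,i+1,j)$ with \emph{unrestricted} $j$ does not accidentally also capture one-column multipartitions of the form $((1),(1),(1))$ or $((1^2),\emptyset,(1))$; one must check that whenever boxes $1,2$ form a horizontal pair, the residue of box $3$ is unconstrained (hence the free $j$), whereas the all-singleton shape is instead caught by family (b) and \emph{not} by (a) because there no two boxes have adjacent residues. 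Once the residue dictionary is pinned down, everything else is a finite verification.
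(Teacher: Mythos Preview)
Your overall strategy---exploit semisimplicity of $H(r,1,3)\cong\mathcal{R}_3^{\Lambda}$ to reduce both sides to a set of primitive central idempotents indexed by $r$-partitions of $3$, then match the two sets---is sound and is essentially what the paper does (the paper routes through the blob algebra $B_{r,3}$ via Theorem~\ref{thm 12} first, so it only has to work with one-column multipartitions, but this is a cosmetic difference). Your KLR-side analysis in the second paragraph is correct.

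The genuine gap is on the Hecke side. You assert that $E_1$ (the symmetriser in the parabolic $H_{1,2}\cong H(\mathfrak{S}_3)$) is supported exactly on those $\lambda$ admitting a standard tableau with $2,3$ in the same row of the same component. This criterion is valid for ordinary partitions but fails for multipartitions: for $\lambda$ with three singleton components, no two boxes share a row, yet $E_1\mathbb{V}_\lambda\neq 0$. Indeed, for such $\lambda$ every $T_i$ acts on $\mathbb{V}_\lambda$ via case~(2.3) of Theorem~\ref{AK} with generic parameters, so $\mathbb{V}_\lambda|_{H_{1,2}}$ is the regular representation of the type-$A_2$ Hecke algebra, which contains the trivial summand. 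Consequently your Hecke-side set (``$\lambda$ with a horizontal domino'') is strictly smaller than your KLR-side set (``horizontal domino or all-singleton''), and your claimed equality $f(\mathcal{J}_3)=\langle E_0^{(1)},\dots,E_0^{(r)},E_1\rangle$ does not follow. The sentence ``which, since $n=3$, means $\lambda$ has at most two non-empty components'' does not repair this: one-column certainly does not imply at most two components, and you later list ``precisely the same set of $\lambda$'' while writing down a different set.

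The paper addresses exactly this point: it computes $E_1 t_{123}$ explicitly using Theorem~\ref{AK} to show it is nonzero for every $\lambda\in\mathfrak{B}_3^{(r)}(3)$, then combines this with the (easier) verification that $E_1\mathbb{V}_\lambda=0$ for $\lambda\in\mathfrak{B}_3^{(r)}(1)\cup\mathfrak{B}_3^{(r)}(2)$ and a dimension count. Once you insert this computation (or the regular-representation argument above) into your first paragraph, your proof goes through.
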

\begin{proof}
    This can be proved by direct calculation via (\ref{isoinv}) in the isomorphism by Brundan and Kleshchev. We give an alternative proof using the cellular structure of the generalised blob algebra $B_{r,3}$. By Theorem \ref{thm 12}, we have
    \begin{equation}
        B_{r,3}\cong \mathcal{R}_3^{\Lambda}/\sum_{j\in I,(\alpha_i,\Lambda)>0}\langle  e(i,i+1,j) \rangle_{\mathcal{R}_{3}^{\Lambda}}.
    \end{equation}
    Comparing with (\ref{tlr13}), it is enough to prove
    \begin{equation}
        TL_{r,1,3}\cong B_{r,3}/\sum_{(\alpha_{i_j},\Lambda)>0\text{ for }j=1,2,3}\langle  e(i_1,i_2,i_3) \rangle_{B_{r,3}}.
    \end{equation}
    Let $\mathfrak{B}_3^{(r)}$ be the poset of one-column $r$-partitions of $3$ and $\mathfrak{B}_3^{(r)}(i),i=1,2,3$ be the subset consisting of the multipartitions with exactly $i$ non-empty partitions. For $\lambda\neq \mu\in \mathfrak{B}_3^{(r)}$ and $t\in Std(\lambda),s\in Std(\mu)$, the restriction that $\Lambda=\Lambda_{i_1}+\Lambda_{i_2}+\dots+\Lambda_{i_r}\in P^+$ where $|i_a-i_b|>2$ for $a\neq b$ implies $e(t)\neq e(s)$. As $e(i)$'s are orthogonal idempotents, for $C_{t',t}^{\lambda}$ and $C_{s,s'}^{\mu}$ in the cellular basis in Definition \ref{Df28}, we have
    \begin{equation}
        C_{t',t}^{\lambda}C_{s,s'}^{\mu}=0,
    \end{equation}
    which implies
    \begin{equation}\label{blockdecomr13}
        B_{r,3}=\bigoplus_{\lambda\in \mathfrak{B}_3^{(r)}}\langle  e_{\lambda} \rangle_{B_{r,3}}
    \end{equation}
    and each $\langle  e_{\lambda} \rangle_{B_{r,3}}$ is an indecomposable two-sided ideal of $B_{r,3}$.
    On the other hand, we have
    \begin{equation}
        TL_{r,1,3}\cong B_{r,3}/\langle  E_1 \rangle_{B_{r,3}}
    \end{equation}
    where $E_1$ is the idempotent above. The decomposition in (\ref{blockdecomr13}) implies
    \begin{equation}
        \langle  E_1 \rangle_{B_{r,3}}=\bigoplus_{\lambda\in \mathfrak{D}}\langle  e_{\lambda} \rangle_{B_{r,3}},
    \end{equation}
    where $\mathfrak{D}$ is a subset of $\mathfrak{B}_3^{(r)}$. By Theorem \ref{AK} and a simple calculation, observe that 
    \begin{equation}
        E_1\mathbb{V}_{\lambda}=0;E_0^{(i)}\mathbb{V}_{\lambda}=0 \textit{ for all } i=1,2 \textit{ and }\lambda\in \mathfrak{B}_3^{(r)}(j),j=1,2.
    \end{equation}
    Therefore, for any $\lambda\in \mathfrak{B}_3^{(r)}(j),j=1,2$, $\mathbb{V}_{\lambda}$ is a simple module of $TL_{r,1,3}$. So we have
    \begin{equation*}
        \begin{aligned}
            dim(TL_{r,1,3}) &\geq \sum_{\lambda\in \mathfrak{B}_3^{(r)}(1)} (dim(\mathbb{V}_{\lambda}))^2+\sum_{\lambda\in \mathfrak{B}_3^{(r)}(2)} (dim(\mathbb{V}_{\lambda}))^2\\
            &=3\times 1+6\times 3^2\\
            &=57.
        \end{aligned}
    \end{equation*}
    On the other hand, for any $\lambda\in \mathfrak{B}_3^{(r)}(3)$, denote by $k_1$,$k_2$ and $k_3$ the indices of the three non-empty components of $\lambda$. Let $t_{a_1a_2a_3}$ be the standard tableau of shape $\lambda$ with the number $a_i$ in the $k_i^{th}$ position.  Then by Theorem \ref{AK}, $\{t_{a_1a_2a_3}\}$ forms a basis of the irreducible module $\mathbb{V}_{\lambda}$, and we have
    \begin{equation}
    \begin{aligned}
        E_1 t_{123}=& (\frac{qv_{a_2}-v_{a_1}}{v_{a_2}-v_{a_1}}+\frac{(q-1)v_{a_3}(qv_{a_2}-v_{a_1})}{(v_{a_2}-v_{a_1})(v_{a_3}-v_{a_2})}+\frac{(q-1)v_{a_3}(v_{a_2}-qv_{a_1})}{(v_{a_2}-v_{a_1})(v_{a_3}-v_{a_1})})(t_{123}+t_{213})\\
        &+\frac{(qv_{a_3}-v_{a_1})(qv_{a_3}-v_{a_2})}{(v_{a_3}-v_{a_1})(v_{a_3}-v_{a_2})}(t_{132}+t_{231}+t_{312}+t_{321}).
    \end{aligned}
    \end{equation}
    Since $\frac{v_i}{v_j}\neq q$ for any $i\neq j$, $(qv_{a_3}-v_{a_1})(qv_{a_3}-v_{a_2})\neq 0$.
    Therefore, $E_1$ acts non-trivially on $\mathbb{V}_{\lambda}$ if $\lambda\in \mathfrak{B}_3^{(r)}(3)$.  So $TL_{r,1,3}$ has no simple modules of rank $6$.
    Therefore, $\mathfrak{B}_3^{(r)}(3)\subseteq \mathfrak{D} $. Comparing dimensions, we have $\mathfrak{B}_3^{(r)}(3)=\mathfrak{D} $ which implies
    \begin{equation}
        TL_{r,1,3}=\bigoplus_{\lambda\in \mathfrak{B}_3^{(r)}(i),i=1,2}\langle  e_{\lambda} \rangle_{B_{r,3}}.
    \end{equation}
    Therefore, 
    \begin{equation*}
         \begin{aligned}
        TL_{r,1,3}&=B_{r,3}/\bigoplus_{\lambda\in \mathfrak{B}_3^{(r)}(3)}\langle  e_{\lambda} \rangle_{B_{r,3}}\\
        &=B_{r,3}/\sum_{(\alpha_{i_j},\Lambda)>0\text{ for }j=1,2,3,i_j\neq i_k \textit{ for }j\neq k}\langle  e(i_1,i_2,i_3) \rangle_{B_{r,3}}\\
        &=\mathcal{R}_3^{\Lambda}/\mathcal{J}_{3}.
    \end{aligned}   
    \end{equation*}

\end{proof}
For any $n\geq 3$, $\mathcal{R}_{3}^{\Lambda}$ can be regarded as a subalgebra of $\mathcal{R}_{n}^{\Lambda}$. Denote by $\mathcal{J}_{n}$ the two-sided ideal of $\mathcal{R}_{n}^{\Lambda}$ which is generated by $\mathcal{J}_{3}$. Lemma $(\ref{keylm})$ implies
\begin{equation}\label{ieq}
	\begin{aligned}
		\mathcal{J}_{n}(\Lambda)=&\sum_{\mathbf{i}\in I^{n-2},(\alpha_i,\Lambda)>0}\langle  e(i,i+1,\mathbf{i}) \rangle_{\mathcal{R}_{n}^{\Lambda}}\\
&+\sum_{\mathbf{i}\in I^{n-3},(\alpha_{i_j},\Lambda)>0\text{ for }j=1,2,3,i_j\neq i_k \textit{ for }j\neq k}\langle  e(i_1,i_2,i_3,\mathbf{i}) \rangle_{\mathcal{R}_{n}^{\Lambda}}.
	\end{aligned}	
\end{equation}

It now follows that the generalised Temperley-Lieb algebra $TL_{r,1,n}$ can be realised as a quotient of the KLR algebra, that is
\begin{thm} $\label{dftl}$
    Let $TL_{r,1,n}$ be the generalised Temperley-Lieb algebra in Definition \ref{TL} and $\mathcal{R}_n^{\Lambda}$ be the cyclotomic KLR algebra isomorphic to the corresponding Hecke algebra $H(r,1,n)$ with $\Lambda$ satisfying $(\ref{reslmd})$.
	 Then
	\begin{equation}
		TL_{r,1,n}\cong  \mathcal{R}_n^{\Lambda}/\mathcal{J}_{n},
	\end{equation} 
	where $\mathcal{J}_{n}$ is the two-sided ideal of $\mathcal{R}_n^{\Lambda}$ in (\ref{ieq}).
\end{thm}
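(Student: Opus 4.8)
My plan is to match, under the Brundan--Kleshchev isomorphism $\theta\colon H(r,1,n)\to\mathcal R_n^{\Lambda}$, the kernel $\mathcal K_n:=\langle E_0^{(1)},\dots,E_0^{(r)},E_1,\dots,E_{n-2}\rangle_{H(r,1,n)}$ defining $TL_{r,1,n}$ with the ideal $\mathcal J_n$. The first step is a reduction internal to $H(r,1,n)$: I would show that $E_i\in\langle E_1\rangle_{H(r,1,n)}$ for every $2\le i\le n-2$, so that $\mathcal K_n=\langle E_0^{(1)},\dots,E_0^{(r)},E_1\rangle_{H(r,1,n)}$. For this, let $w$ be the longest element of the parabolic subgroup $\langle s_1,\dots,s_{i+1}\rangle\cong\mathfrak{S}_{i+2}$ of $\mathfrak{S}_n$. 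Then $T_w$ is a unit of $H(r,1,n)$, being a product of the invertible generators $T_1,\dots,T_{i+1}$, and the standard identity $T_wT_uT_w^{-1}=T_{wuw^{-1}}$ for $u$ in this parabolic, together with $ws_1w^{-1}=s_{i+1}$ and $ws_2w^{-1}=s_i$, turns the formula for $E_1$ given before Definition~\ref{TL}, namely $E_1=a\sum_{u\in\langle s_1,s_2\rangle}T_u$, into $T_wE_1T_w^{-1}=a\sum_{u'\in\langle s_i,s_{i+1}\rangle}T_{u'}=E_i$.

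Since $E_0^{(1)},\dots,E_0^{(r)},E_1$ all lie in the parabolic subalgebra $H(r,1,3)\subseteq H(r,1,n)$, the previous step gives $\mathcal K_n=\langle\mathcal K_3\rangle_{H(r,1,n)}$, where $\mathcal K_3:=\langle E_0^{(1)},\dots,E_0^{(r)},E_1\rangle_{H(r,1,3)}$ is the kernel of $H(r,1,3)\to TL_{r,1,3}$. By the previous lemma, the rank-$3$ Brundan--Kleshchev isomorphism carries $\mathcal J_3$ onto $\mathcal K_3$. I would then observe that $\theta$ restricts on the subalgebra $H(r,1,3)$ to the composite of the rank-$3$ Brundan--Kleshchev isomorphism with the embedding $\mathcal R_3^{\Lambda}\hookrightarrow\mathcal R_n^{\Lambda}$: the generators $e(\mathbf{i}),y_r,\psi_s$ are given by the same polynomials in $X_r,T_r$ irrespective of the ambient rank, and under the natural inclusion $\mathcal H_3^{\Lambda}\hookrightarrow\mathcal H_n^{\Lambda}$ one has $e(\mathbf{i})=\sum_{\mathbf{k}\in I^{n-3}}e(\mathbf{i},\mathbf{k})$ for $\mathbf{i}\in I^3$, exactly the orthogonality relation $(\ref{orth})$ used in the proof of Lemma~$\ref{keylm}$. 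Applying $\theta$ to $\mathcal K_n=\langle\mathcal K_3\rangle_{H(r,1,n)}$ then yields $\langle\mathcal J_3\rangle_{\mathcal R_n^{\Lambda}}=\mathcal J_n$, and the theorem follows by passing to quotients. The explicit description $(\ref{ieq})$ of $\mathcal J_n$ is not logically needed here, but it allows one to run the same argument concretely: each idempotent $e(i,i+1,\mathbf{i})$ (resp.\ $e(i_1,i_2,i_3,\mathbf{i})$) is absorbed by the larger idempotent $\sum_{j\in I}e(i,i+1,j)$ (resp.\ $e(i_1,i_2,i_3)$), which already lies in $\theta(\mathcal K_3)$ by the previous lemma.

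The one delicate point is the compatibility just used, of the rank-$3$ and rank-$n$ Brundan--Kleshchev isomorphisms with the parabolic inclusions; this is routine once $(\ref{orth})$ is invoked, but it does need to be stated, since the KLR idempotents $e(\mathbf{i})$ a priori depend on the ambient rank. Everything else --- the conjugation identity for the $E_i$ and the interplay between $\langle-\rangle_{H(r,1,3)}$ and $\langle-\rangle_{H(r,1,n)}$ --- is a formal manipulation of two-sided ideals.
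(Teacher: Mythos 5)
Your proposal is correct and takes essentially the same route as the paper: identify the defining ideal with $\mathcal{J}_3$ in rank $3$ via the preceding lemma, then pass to rank $n$ using Lemma \ref{keylm} and the idempotent compatibility (\ref{orth}). What you add is exactly what the paper leaves implicit in its ``it now follows'' step --- the conjugation $T_{w_0}E_1T_{w_0}^{-1}=E_i$ giving $\langle E_1,\dots,E_{n-2}\rangle=\langle E_1\rangle$, and the compatibility of the rank-$3$ and rank-$n$ Brundan--Kleshchev identifications --- and both of these you justify correctly.
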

In \cite{LOBOSMATURANA2020106277}, Lobos and  Ryom-Hansen introduce a KLR interpretation of the generalised blob algebra $B_{r,n}$ as a quotient of $\mathcal{R}_n^{\Lambda}$:

\begin{thm}(\cite{LOBOSMATURANA2020106277}, Theorem 42)  $\label{thm 12} $
	Let $B_{r,n}$ and $\mathcal{R}_n^{\Lambda}$ be as defined above, then $B_{r,n}=\mathcal{R}_n^{\Lambda}/\mathcal{J}_{1,n}$ where
	\begin{equation}
		\mathcal{J}_{1,n}=\sum_{\mathbf{i}\in I^{n-2},(\alpha_i,\Lambda)>0}\langle  e(i,i+1,\mathbf{i}) \rangle_{\mathcal{H}_{n}^{\Lambda}}.
	\end{equation}  
\end{thm}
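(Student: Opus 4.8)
The plan is to reduce the statement to the case $n=3$, which is settled in the preceding lemma, and then transport the resulting equality of ideals from $\mathcal R_3^\Lambda$ up to $\mathcal R_n^\Lambda$ along the natural inclusion. First I would shrink the defining ideal. By Definition \ref{TL}, $TL_{r,1,n}=H(r,1,n)/\langle E_0^{(1)},\dots,E_0^{(r)},E_1,\dots,E_{n-2}\rangle$. For $2\le i\le n-2$ the parabolic subalgebra $H_{i,i+1}$ is conjugate to $H_{1,2}$ by an invertible element of $H(r,1,n)$ (lift a permutation in $\mathfrak S_n\le G(r,1,n)$ carrying $\{s_1,s_2\}$ to $\{s_i,s_{i+1}\}$; recall each $T_j$ is invertible since $q\neq 0$), and this conjugation matches $E_1$ with $E_i$, so $\langle E_1,\dots,E_{n-2}\rangle=\langle E_1\rangle$ as two-sided ideals — exactly the fact already used in the $r=1$ example. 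Hence
\[
TL_{r,1,n}=H(r,1,n)/\langle E_0^{(1)},\dots,E_0^{(r)},E_1\rangle ,
\]
and all of $E_0^{(1)},\dots,E_0^{(r)},E_1$ lie in the subalgebra $H(r,1,3)\le H(r,1,n)$.

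Next I would pass to the KLR side. Write $\psi\colon H(r,1,n)\xrightarrow{\ \sim\ }\mathcal R_n^\Lambda$ for the isomorphism of Theorem \ref{iso thm}, $\psi_3$ for its $n=3$ analogue, and $\iota\colon\mathcal R_3^\Lambda\hookrightarrow\mathcal R_n^\Lambda$ for the unital embedding given by $e(j_1,j_2,j_3)\mapsto\sum_{\mathbf i\in I^{n-3}}e(j_1,j_2,j_3,\mathbf i)$, $y_r\mapsto y_r$, $\psi_s\mapsto\psi_s$, which is exactly the one with respect to which $\mathcal J_n=\langle\mathcal J_3\rangle_{\mathcal R_n^\Lambda}$. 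The crucial point is that $\psi|_{H(r,1,3)}=\iota\circ\psi_3$ as maps $H(r,1,3)\to\mathcal R_n^\Lambda$. To check this one unwinds the Brundan–Kleshchev formulas: the images of $T_0,T_1,T_2$ are built from $X_1,T_1,T_2$, and the series $P_r(i),Q_r(i)$ and the substitutions $y_r(i)$ for $r\le 3$ depend only on the first three residues; since summing the relevant spectral idempotents over the $n-3$ trailing residues is precisely $\iota$ applied to the $n=3$ idempotents, the formulas for $\psi$ and for $\iota\circ\psi_3$ agree on $T_0,T_1,T_2$, hence on all of $H(r,1,3)$.

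With these two reductions the proof concludes formally. By the preceding lemma the isomorphism $TL_{r,1,3}\cong\mathcal R_3^\Lambda/\mathcal J_3$ is the one induced by $\psi_3$, so $\mathcal J_3=\langle\psi_3(E_0^{(1)}),\dots,\psi_3(E_0^{(r)}),\psi_3(E_1)\rangle_{\mathcal R_3^\Lambda}$. Applying $\iota$ and using the elementary fact that $\langle\langle S\rangle_A\rangle_B=\langle S\rangle_B$ for a subalgebra $A\le B$ and $S\subseteq A$, we get
\[
\mathcal J_n=\langle\iota(\mathcal J_3)\rangle_{\mathcal R_n^\Lambda}=\langle\iota\psi_3(E_0^{(1)}),\dots,\iota\psi_3(E_0^{(r)}),\iota\psi_3(E_1)\rangle_{\mathcal R_n^\Lambda}=\langle\psi(E_0^{(1)}),\dots,\psi(E_0^{(r)}),\psi(E_1)\rangle_{\mathcal R_n^\Lambda},
\]
the last equality by the compatibility $\psi|_{H(r,1,3)}=\iota\circ\psi_3$ of the previous paragraph. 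Since $\psi$ carries the ideal $\langle E_0^{(1)},\dots,E_0^{(r)},E_1\rangle_{H(r,1,n)}$ onto the right-hand side, it descends to an isomorphism $TL_{r,1,n}\cong\mathcal R_n^\Lambda/\mathcal J_n$; that $\mathcal J_n$ has the explicit form (\ref{ieq}) is then Lemma \ref{keylm}.

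I expect the real obstacle to be the compatibility statement $\psi|_{H(r,1,3)}=\iota\circ\psi_3$ in the second paragraph — i.e.\ checking that the Brundan–Kleshchev isomorphisms for $n=3$ and for general $n$ are genuinely intertwined by the natural unital inclusions on both sides, which is where one must be careful about which embedding $\mathcal R_3^\Lambda\hookrightarrow\mathcal R_n^\Lambda$ is meant. Everything else is either routine (the conjugacy argument shrinking the generating set) or purely formal (the manipulation of generated ideals and the appeal to Lemma \ref{keylm}).
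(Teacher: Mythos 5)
There is a fundamental mismatch: the statement you were asked to prove is Theorem \ref{thm 12}, the KLR realisation of the generalised \emph{blob} algebra $B_{r,n}=H(r,1,n)/\langle E_0^{(1)},\dots,E_0^{(r)}\rangle$ as $\mathcal{R}_n^{\Lambda}/\mathcal{J}_{1,n}$, where $\mathcal{J}_{1,n}$ is generated only by the idempotents $e(i,i+1,\mathbf{i})$ with $(\alpha_i,\Lambda)>0$. Your argument instead proves the KLR realisation of $TL_{r,1,n}$ (Theorem \ref{dftl} in the paper): you quotient by the full list $E_0^{(1)},\dots,E_0^{(r)},E_1,\dots,E_{n-2}$, you work with the larger ideal $\mathcal{J}_n$ containing the second family $e(i_1,i_2,i_3,\mathbf{i})$, and your reduction to $n=3$ and the conjugacy step $\langle E_1,\dots,E_{n-2}\rangle=\langle E_1\rangle$ have no counterpart in Theorem \ref{thm 12}, whose defining ideal on the Hecke side involves no $E_i$ with $i\geq 1$ at all.

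Worse, read as a proof of Theorem \ref{thm 12} the argument is circular: your key input, ``the preceding lemma'' giving $TL_{r,1,3}\cong\mathcal{R}_3^{\Lambda}/\mathcal{J}_3$ (equation (\ref{tlr13})), is proved in the paper precisely by invoking Theorem \ref{thm 12} (for $n=3$), so it cannot be used as the base case here. Note also that the paper itself supplies no proof of this statement: it is imported verbatim from \cite{LOBOSMATURANA2020106277} (their Theorem 42), where it is established by their own analysis of the blob algebra and its graded cellular basis. A self-contained proof along the lines you sketch would instead have to verify a base case for $B_{r,2}$ directly from the Brundan--Kleshchev formulas — the generators $E_0^{(1)},\dots,E_0^{(r)}$ all lie in $H(r,1,2)$, and one must show their images generate $\sum_{(\alpha_i,\Lambda)>0}\langle e(i,i+1)\rangle_{\mathcal{R}_2^{\Lambda}}$ — and then extend to general $n$ via Lemma \ref{keylm} together with the compatibility of the isomorphisms with the (non-unital) embeddings; neither the base case nor that compatibility check appears in your proposal, and the compatibility you do assert is the very point you flag as unverified.
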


Since $\mathcal{J}_{1,n}$ is contained in the ideal $\mathcal{J}_{n}$, the generalised Temperley-Lieb algebra $TL_{r,1,n}$ is realised as a quotient of the generalised blob algebra $B_{r,n}$:
\begin{cor}
    $\label{blockdecom}$
	The generalised Temperley-Lieb algebra $TL_{r,1,n}$ is a quotient of the generalised blob algebra $B_{r,n}$ by the two-sided ideal
\begin{equation*}
    \mathcal{J}_{2,n}=	\sum_{\mathbf{i}\in I^{n-3},(\alpha_{i_j},\Lambda)>0\text{ for }j=1,2,3,i_j\neq i_k \textit{ for }j\neq k}\langle  e(i_1,i_2,i_3,\mathbf{i}) \rangle_{B_{r,n}}.
\end{equation*}
\end{cor}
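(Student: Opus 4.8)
The plan is to apply the third isomorphism theorem to the chain of two-sided ideals $\mathcal{J}_{1,n}\subseteq \mathcal{J}_n$ inside $\mathcal{R}_n^{\Lambda}$. First I would record that, directly from the description of $\mathcal{J}_n$ in $(\ref{ieq})$, its first summand is exactly the ideal $\mathcal{J}_{1,n}$ appearing in Theorem \ref{thm 12}; hence $\mathcal{J}_{1,n}\subseteq \mathcal{J}_n$, and we may write $\mathcal{J}_n=\mathcal{J}_{1,n}+\widetilde{\mathcal{J}}_{2,n}$, where $\widetilde{\mathcal{J}}_{2,n}$ is the two-sided ideal of $\mathcal{R}_n^{\Lambda}$ generated by the elements $e(i_1,i_2,i_3,\mathbf{i})$ with $(\alpha_{i_j},\Lambda)>0$ for $j=1,2,3$, $i_j\neq i_k$ for $j\neq k$ and $\mathbf{i}\in I^{n-3}$.

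Next, let $\pi:\mathcal{R}_n^{\Lambda}\to \mathcal{R}_n^{\Lambda}/\mathcal{J}_{1,n}=B_{r,n}$ be the canonical surjection supplied by Theorem \ref{thm 12}. Since $\mathcal{J}_{1,n}=\ker\pi\subseteq \mathcal{J}_n$, the third isomorphism theorem together with Theorem \ref{dftl} gives
\[
TL_{r,1,n}\;\cong\;\mathcal{R}_n^{\Lambda}/\mathcal{J}_n\;\cong\;\bigl(\mathcal{R}_n^{\Lambda}/\mathcal{J}_{1,n}\bigr)\big/\bigl(\mathcal{J}_n/\mathcal{J}_{1,n}\bigr)\;=\;B_{r,n}/\pi(\mathcal{J}_n).
\]
It then remains only to identify $\pi(\mathcal{J}_n)$ with the ideal $\mathcal{J}_{2,n}$ of $B_{r,n}$ in the statement. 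As $\pi$ is a surjective algebra homomorphism, it carries the two-sided ideal generated by a subset $S\subseteq \mathcal{R}_n^{\Lambda}$ onto the two-sided ideal of $B_{r,n}$ generated by $\pi(S)$. Applying this to $\mathcal{J}_n=\mathcal{J}_{1,n}+\widetilde{\mathcal{J}}_{2,n}$: the generators $e(i,i+1,\mathbf{i})$ of $\mathcal{J}_{1,n}$ lie in $\ker\pi$ and contribute nothing, while the generators $e(i_1,i_2,i_3,\mathbf{i})$ of $\widetilde{\mathcal{J}}_{2,n}$ map to the corresponding idempotents $e(i_1,i_2,i_3,\mathbf{i})\in B_{r,n}$ (we retain the same symbols for the images of the KLR idempotents, as throughout after Theorem \ref{iso thm}). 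Hence $\pi(\mathcal{J}_n)$ is the two-sided ideal of $B_{r,n}$ generated by these images, which is precisely $\mathcal{J}_{2,n}$, and therefore $TL_{r,1,n}\cong B_{r,n}/\mathcal{J}_{2,n}$.

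The argument is essentially formal once Theorems \ref{dftl} and \ref{thm 12} are available, so there is no real obstacle; the only point needing a little care is the bookkeeping in the final step, namely verifying that passing to the quotient $B_{r,n}$ genuinely eliminates the ``blob-type'' generators $e(i,i+1,\mathbf{i})$ and leaves exactly the ``three-distinct-residues'' generators $e(i_1,i_2,i_3,\mathbf{i})$, so that the displayed generating set for $\mathcal{J}_{2,n}$ is correct rather than merely generating a larger ideal. This is immediate from the explicit shape of $\mathcal{J}_n$ in $(\ref{ieq})$ and the fact that $e(i,i+1,\mathbf{i})=0$ already holds in $B_{r,n}$.
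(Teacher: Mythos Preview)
Your proof is correct and follows essentially the same approach as the paper: the paper simply observes that $\mathcal{J}_{1,n}$ is contained in $\mathcal{J}_n$ and deduces the corollary immediately, which is exactly the third-isomorphism-theorem argument you have written out in detail. Your careful identification of $\pi(\mathcal{J}_n)$ with $\mathcal{J}_{2,n}$ makes explicit what the paper leaves implicit.
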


\section{The graded cellularity of $TL_{r,1,n}$}$\label{gcs}$
	In this section, we show that $TL_{r,1,n}$ is a graded cellular algebra when $r\geq 2$. The cellularity of $TL_{1,1,n}$ is explained in Example 1.4 in \cite{Lehrer1996}. We construct a cellular structure of $TL_{r,1,n}$ using a truncation of that of the generalised blob algebra $B_{r,n}$ given by Lobos and Ryom-Hansen in \cite{LOBOSMATURANA2020106277}.
	
	 We begin by recalling the definition of cellular algebras.
	\begin{defn}$\label{dfc}$
(\cite{Lehrer1996}, Definition 1.1)
	Let $R$ be a commutative ring with unit and $A$ be an $R$-algebra. Then $A$ is called cellular if it has a cell datum $(\Lambda,i,M,C)$ consisting of
	
	a) A finite poset $\Lambda$;
	
	b) An $R$-linear anti-involution $i$;
	
	c) A finite, non-empty set $M(\lambda)$ of indices for every $\lambda \in \Lambda$;
	
	d) An injection 
	\begin{equation*}
	\begin{aligned}
		C: \cup_{\lambda \in \Lambda}M(\lambda)\times M(\lambda) & \to A \\
		(S,T) &\mapsto C_{S,T}^{\lambda}
	\end{aligned}	    
	\end{equation*}

	satisfying the following conditions:
	
	1) The image of $C$ forms an $R$-basis of $A$.
	
	2) $i(C_{S,T}^{\lambda})=C_{T,S}^{\lambda}$ for all elements in the basis.
	
	3) Let $A_{<\lambda}$ be the $R$-span of all the elements of form $C_{X,Y}^{\mu}$ with $\mu <\lambda$ in the poset.
	Then for all $a\in A$,
	\begin{equation}\label{55}
		aC_{S,T}^{\lambda}=\sum_{S'\in M(\lambda)} r_a(S',S)C_{S',T}^{\lambda}  \text{    } mod \text{   }A_{<\lambda}
	\end{equation}
	with the coefficients $r_a(S',S)$ independent of $T$.
\end{defn}
	
	Let $\Lambda'$ be a downward closed subset of $\Lambda$. Then  $A(\Lambda'):=\langle C_{S,T}^{\lambda}|\lambda\in\Lambda'\rangle_R$ forms a two-sided ideal of $A$. Further, if $\Lambda_1\subseteq\Lambda_2$ are two downward closed subsets of $\Lambda$, then $A(\Lambda_1)\subseteq A(\Lambda_2$ and $A(\Lambda_2)/A(\Lambda_1)$ can be regarded as an $(A,A)$-bimodule. In particular, for $\lambda\in\Lambda$, denote by $A(\{\lambda \})$ the bimodule $A(\Lambda_2)/A(\Lambda_1)$ where $\Lambda_2=\{\mu\in\Lambda|\mu\leq\lambda \}$ and $\Lambda_1=\{\mu\in\Lambda|\mu<\lambda \}$.

Let us recall the definition of a cell module which is introduced by Graham and Lehrer in \cite{Lehrer1996}.
\begin{defn}
(\cite{Lehrer1996}, Definition 2.1)
	For each $\lambda\in \Lambda$ define the (left) $A$-module $W(\lambda)$ as follows:
	
	$W(\lambda)$ is a free module with basis $\{C_S|S\in M(\lambda) \}$ and the $A$-action is defined by
	\begin{equation}
		aC_S=\sum_{S'\in Std(\lambda)}r_a(S',S)C_{S'}
	\end{equation}
    for all $a\in A$ and $S\in M(\lambda)$ and $r_a(S',S)$ is the coefficient in $(\ref{55})$. The module $W(\lambda)$ is called the (left) cell module of $A$ corresponding to $\lambda$.
\end{defn}

Similarly, we can define a right $A$-module $W(\lambda)^*$. There is a natural isomorphism of $R$-modules $C^{\lambda}:W(\lambda)\otimes W(\lambda)^*\mapsto A(\{\lambda \})$ defined by $(C_S,C_T)\mapsto C_{S,T}^{\lambda}$.

There is a bilinear form $\phi_{\lambda}(\;,\;)$ on $ W(\lambda ) $ which is defined as follows and extended bilinearly:
\begin{equation}\label{81}
			C_{S',T}^{\lambda}C_{S,T'}^{\lambda}=\phi_{\lambda}(C_T,C_S)C_{S',T'}^{\lambda}  \text{    } mod \text{   }A_{<\lambda}.
\end{equation}

For $\lambda\in \Lambda$, define $rad(\phi_{\lambda}):=\{x\in W(\lambda)|\phi_{\lambda}(x,y)=0 \text{ for all } y\in W(\lambda)\}$. Let $\Lambda_0=\{\lambda\in \Lambda|\phi_{\lambda}\neq 0 \}$. Graham and Lehrer list the simple modules of $A$ in \cite{Lehrer1996}:

\begin{thm} $\label{33}$
	(\cite{Lehrer1996}, Proposition 3.2,Theorem 3.4)
	Let $A$ be a cellular algebra with the cell datum $(\Lambda,i,M,C)$ over a base ring $R$ which is a field. For $\lambda\in \Lambda$, let $W(\lambda)$ be the cell module of $A$ and $rad(\phi_{\lambda})$ be as defined above. Then we have
	
	(i) $rad(\phi_{\lambda})$ is an $A$-submodule of $W(\lambda)$;
	
	(ii) If $\phi_{\lambda}\neq 0$, the quotient $W(\lambda)/rad(\phi_{\lambda})$ is absolutely irreducible;
	
	(iii) The set $\{L(\lambda)=W(\lambda)/rad(\phi_{\lambda})|\lambda\in\Lambda_0 \}$ is a complete set of absolutely irreducible $A$-modules.
\end{thm}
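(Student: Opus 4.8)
The argument follows Graham and Lehrer, and the plan is to reduce everything to two elementary properties of $\phi_\lambda$ plus one computational identity. First I would record that $\phi_\lambda$ is symmetric and ``contravariant'': applying the anti-involution $i$ to relation \eqref{81} (using $i(A_{<\lambda})=A_{<\lambda}$ and $i(C^\lambda_{S,T})=C^\lambda_{T,S}$) and comparing with \eqref{81} itself gives $\phi_\lambda(C_T,C_S)=\phi_\lambda(C_S,C_T)$, while applying $i$ to \eqref{55} shows that right multiplication by $i(a)$ on the second index of $C^\lambda_{\bullet,\bullet}$ carries the same coefficients $r_a(S',S)$ as left multiplication by $a$ on the first index; feeding this into \eqref{81} yields $\phi_\lambda(ax,y)=\phi_\lambda(x,i(a)y)$ for all $a\in A$ and $x,y\in W(\lambda)$. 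Part (i) is then immediate: if $x\in\mathrm{rad}(\phi_\lambda)$ then $\phi_\lambda(ax,y)=\phi_\lambda(x,i(a)y)=0$ for every $y$, so $ax\in\mathrm{rad}(\phi_\lambda)$.

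The computational identity is that, read inside the cell module, \eqref{81} says the basis element $C^\lambda_{P,Q}\in A$ acts on $W(\lambda)$ by the rank-one map $C_R\mapsto\phi_\lambda(C_Q,C_R)C_P$. Part (ii) follows. If $U\subseteq W(\lambda)$ is a submodule with $U\not\subseteq\mathrm{rad}(\phi_\lambda)$, choose $u\in U$ and $Q$ with $\phi_\lambda(C_Q,u)\neq0$ (possible by symmetry and bilinearity); then $C^\lambda_{P,Q}u=\phi_\lambda(C_Q,u)C_P$ is a nonzero scalar multiple of $C_P$ for every $P$, so $U=W(\lambda)$. Hence when $\phi_\lambda\neq0$ the proper submodule $\mathrm{rad}(\phi_\lambda)$ contains all proper submodules, so $L(\lambda)=W(\lambda)/\mathrm{rad}(\phi_\lambda)$ is irreducible. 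For absolute irreducibility I would show that $A$ surjects onto $\mathrm{End}_R(L(\lambda))$: the form $\phi_\lambda$ descends to a non-degenerate form $\bar\phi_\lambda$ on $L(\lambda)$, the image of $C^\lambda_{P,Q}$ is the operator $v\mapsto\bar\phi_\lambda(\bar C_Q,v)\bar C_P$, and running $\bar C_Q$ over a basis of $L(\lambda)$ while using non-degeneracy to realise its $\bar\phi_\lambda$-dual basis produces every matrix unit, so the image is all of $\mathrm{End}_R(L(\lambda))$ ($L(\lambda)$ being finite dimensional), which is the required absolute irreducibility.

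For part (iii) I would enumerate $\Lambda=\{\lambda_1,\dots,\lambda_N\}$ compatibly with the partial order, obtaining a chain of two-sided ideals $0=A_0\subset A_1\subset\dots\subset A_N=A$ with $A_j:=A(\{\lambda_1,\dots,\lambda_j\})$ and $A_j/A_{j-1}\cong A(\{\lambda_j\})\cong W(\lambda_j)\otimes W(\lambda_j)^*$; by \eqref{55} the left $A$-action on the last bimodule is through the first tensor factor, so $A_j/A_{j-1}$ is a direct sum of copies of $W(\lambda_j)$ as a left module. Given an irreducible $A$-module $S$, choose $j$ minimal with $A_jS\neq0$; since $A_{j-1}S=0$, the map $a\mapsto as$ for a suitable $s\in S$ descends to a nonzero left-module homomorphism $A_j/A_{j-1}\to S$, hence to a surjection $W(\lambda_j)\twoheadrightarrow S$. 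Moreover $\phi_{\lambda_j}\neq0$: otherwise the rank-one identity makes every $C^{\lambda_j}_{P,Q}$ act by zero on $W(\lambda_j)$, hence on $S$, and as these span $A_j/A_{j-1}$ this forces $A_jS\subseteq A_{j-1}S=0$, a contradiction. So $\lambda_j\in\Lambda_0$, and since $\mathrm{rad}(\phi_{\lambda_j})$ is the unique maximal submodule of $W(\lambda_j)$ we get $S\cong L(\lambda_j)$. For distinctness, applying \eqref{55} and its right-handed form (obtained via $i$) on the two sides of $C^\mu_{X,Y}C^\lambda_{S,T}$ places this product in $A(\{\nu:\nu\leq\mu\}\cap\{\nu:\nu\leq\lambda\})$; thus if $\lambda\not\leq\mu$ it lies in $A_{<\lambda}$, so $C^\mu_{X,Y}$ and likewise every $C^{\mu'}_{X,Y}$ with $\mu'\leq\mu$ act by zero on $W(\lambda)$ and hence on $L(\lambda)$, whereas $\phi_\lambda\neq0$ makes some $C^\lambda_{P,Q}$ act nontrivially on $L(\lambda)$. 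An isomorphism $L(\lambda)\cong L(\mu)$ therefore forces $\lambda\leq\mu$ and, symmetrically, $\mu\leq\lambda$, i.e. $\lambda=\mu$; together with the exhaustion above this proves (iii).

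The conceptual step is the rank-one identity: once $C^\lambda_{P,Q}$ is recognised as a ``matrix unit'' relative to $\phi_\lambda$, parts (i) and (ii) are essentially formal. The part demanding care is the bookkeeping in (iii) — constructing the ideal filtration, identifying its sections as left modules, and running the two-sided product estimate that separates the $L(\lambda)$, where one must keep precise track of which cellular basis elements fall into $A_{<\lambda}$.
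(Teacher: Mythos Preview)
The paper does not supply its own proof of this theorem; it is stated with a citation to \cite{Lehrer1996} (Proposition 3.2 and Theorem 3.4) and then used as a black box. Your argument is the standard Graham--Lehrer proof from that reference --- the contravariance of $\phi_\lambda$, the rank-one action of $C^\lambda_{P,Q}$ on $W(\lambda)$, and the ideal filtration for exhaustion and distinctness --- so there is nothing to compare: you have reconstructed exactly what the citation points to, and the proof is correct.
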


In \cite{hu2010graded}, Hu and Mathas introduce a graded cell datum as a development of the original one, which is equipped with a degree function 
\begin{equation*}
	    deg:  \cup_{\lambda \in \Lambda}M(\lambda)\mapsto \mathbb{Z},
	\end{equation*}
	which satisfies $C_{S,T}^{\lambda}$ is homogeneous and of degree $deg(C_{S,T}^{\lambda})=deg(S)+deg(T)$ for all $\lambda\in\lambda$ and $S,T\in M(\lambda)$.

 We next recall the graded cellular basis of $B_{r,n}$ constructed by Lobos and Ryom-Hansen in \cite{LOBOSMATURANA2020106277}. Let $\mathfrak{B}_n^{(r)}$ be the poset of one-column multipartitions of $n$ defined in \ref{1cmltp}. For $\lambda\in\mathfrak{P}_n^{(r)}$, let $t^{\lambda}$ be the unique tableau of shape $\lambda$ such that $t^{\lambda}(i)<t^{\lambda}(j)$ if $1\leq i<j\leq n$. For each $t\in Std(\lambda)$, denote by $d(t)\in \mathfrak{S}_n$ the permutation such that $t=t^{\lambda}\circ d(t)$ where $\circ$ is the natural $\mathfrak{S}_n$-action on the tableaux. For a multipartition $\lambda\in \mathfrak{P}_n^{(r)}$ and a node $\gamma=(a,1,l)\in [\lambda]$, define the residue of $\gamma$ as
\begin{equation}\label{eq75}
	Res^{\Lambda}(\gamma):=1-a+i_l
\end{equation}
where $i_l$ is the subscript of the dominate weight $\Lambda=\Lambda_{i_1}+\Lambda_{i_2}+\dots+\Lambda_{i_r}$.
For $t\in Tab(\lambda)$ and $1\leq k\leq n$, set $Res_t^{\Lambda}(k)=Res^{\Lambda}(\gamma)$, where $\gamma$ is the unique node such that $t(\gamma)=k$. Define the residue sequence of $t$ as follows:
\begin{equation} \label{eqres}
	res^{\Lambda}(t):=(Res_t^{\Lambda}(1),Res_t^{\Lambda}(2),\dots,Res_t^{\Lambda}(n))\in I^n.
\end{equation}
For example, let $\Lambda=\Lambda_0+\Lambda_3+\Lambda_7$ and $t$ be the following tableau, then $res^{\Lambda}(t)=(0,3,2,-1,-2,1,-3,-4,-5)$.
\tikzset{every picture/.style={line width=0.75pt}} %set default line width to 0.75pt        
\begin{center}

\begin{tikzpicture}[x=0.75pt,y=0.75pt,yscale=-1,xscale=1]
%uncomment if require: \path (0,252); %set diagram left start at 0, and has height of 252

%Shape: Grid [id:dp7940586034191837] 
\draw  [draw opacity=0] (480,43) -- (509.94,43) -- (509.94,222.16) -- (480,222.16) -- cycle ; \draw   (480,43) -- (480,222.16)(509.86,43) -- (509.86,222.16) ; \draw   (480,43) -- (509.94,43)(480,72.86) -- (509.94,72.86)(480,102.71) -- (509.94,102.71)(480,132.57) -- (509.94,132.57)(480,162.42) -- (509.94,162.42)(480,192.28) -- (509.94,192.28)(480,222.13) -- (509.94,222.13) ; \draw    ;
%Shape: Grid [id:dp6805432859145484] 
\draw  [draw opacity=0] (567.34,43) -- (598.15,43) -- (598.15,132.67) -- (567.34,132.67) -- cycle ; \draw   (567.34,43) -- (567.34,132.67)(597.19,43) -- (597.19,132.67) ; \draw   (567.34,43) -- (598.15,43)(567.34,72.86) -- (598.15,72.86)(567.34,102.71) -- (598.15,102.71)(567.34,132.57) -- (598.15,132.57) ; \draw    ;

% Text Node
\draw (484.58,51.67) node [anchor=north west][inner sep=0.75pt]   [align=left] {1};
% Text Node
\draw (484.58,81.52) node [anchor=north west][inner sep=0.75pt]   [align=left] {4};
% Text Node
\draw (484.58,111.38) node [anchor=north west][inner sep=0.75pt]   [align=left] {5};
% Text Node
\draw (484.58,141.23) node [anchor=north west][inner sep=0.75pt]   [align=left] {7};
% Text Node
\draw (484.58,171.09) node [anchor=north west][inner sep=0.75pt]   [align=left] {8};
% Text Node
\draw (484.58,200.95) node [anchor=north west][inner sep=0.75pt]   [align=left] {9};
% Text Node
\draw (571.92,51.67) node [anchor=north west][inner sep=0.75pt]   [align=left] {2};
% Text Node
\draw (571.92,81.52) node [anchor=north west][inner sep=0.75pt]   [align=left] {3};
% Text Node
\draw (571.92,111.38) node [anchor=north west][inner sep=0.75pt]   [align=left] {6};
% Text Node
\draw (649.43,50.09) node [anchor=north west][inner sep=0.75pt]    {$\emptyset $};

\end{tikzpicture}
\end{center}
\begin{defn} $\label{dfei}$
	Suppose that $\lambda\in \mathfrak{B}_n^{(r)}$ and $t\in Tab(\lambda)$ is a tableau of shape $\lambda$. Let $t^{(\lambda)}$ be the standard tableau of shape $\lambda$ defined above. Set
\begin{equation*}
	\begin{aligned}
	    e(t)&=e(res^{\Lambda}(t));\\
		e_{\lambda}&:=e(res^{\Lambda}(t^{(\lambda)})),
	\end{aligned}    
\end{equation*}	

	where $e(i)$ is the idempotent corresponding to the eigenspace $M_i$ in (\ref{es}).
\end{defn}
 Let $*$ be the unique $R$-linear anti-automorphism of the KLR algebra $\mathcal{R}_n^{\Lambda}$ introduced by Brundan and Kleshchev in section 4.5 of \cite{BrundanKleshchev_2009} which fixes each of the generators in Definition $\ref{klrdf}$.

\begin{defn} $\label{Df28}$
		Suppose that $\lambda \in \mathfrak{B}_n^{(r)}$ and $s,t\in Std(\lambda)$ and fix reduced expressions $d(s)=s_{i_1}s_{i_2}\dots s_{i_k}$ and $d(t)=s_{j_1}s_{j_2}\dots s_{j_m}$. Define
		\begin{equation} \label{76}
		C_{s,t}^{\lambda}=\psi_{d(s)}^* e_{\lambda}\psi_{d(t)},
	\end{equation}
	where $\psi_{d(s)}=\psi_{i_1}\psi_{i_2}\dots \psi_{i_k}$ and $\psi_{d(t)}=\psi_{j_1}\psi_{j_2}\dots \psi_{j_m}$.
\end{defn}

Define the degree function $deg: \bigsqcup_{\lambda\in \mathfrak{B}_n^{(r)}} Std(\lambda)\mapsto \mathbb{Z}$ as
\begin{equation} \label{degt}
    deg(t):=deg(\psi_{d(t)}^* e_{\lambda}),
\end{equation}
 where the degree function on the right hand side is the one defined in $(\ref{degf})$. We next recall the graded cell datum of $\mathfrak{B}_n^{(r)}$ introduced by Lobos and Ryom-Hansen:

\begin{thm}$\label{29}$
	(\cite{LOBOSMATURANA2020106277}, Theorem 38)
	Let $\mathfrak{B}_n^{(r)}$ be the poset of one-column $r$-partitions of $n$ and $C,deg$ be as defined above.
	$(\mathfrak{B}_n^{(r)},*,Std,C,deg)$ is a graded cell datum of $B_{r,n}^{\Lambda}$. In other words, $\{C_{s,t}^{\lambda}|\lambda\in \mathfrak{B}_n^{(r)},s,t\in Std(\lambda) \}$ is a graded cellular basis of $B_{r,n}^{\Lambda}$ with respect to the partial order $\unlhd $.
\end{thm}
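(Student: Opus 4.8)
The plan is to obtain this cell datum by transport of structure from the ambient algebra. By Theorem~\ref{thm 12}, $B_{r,n}^{\Lambda}=\mathcal{R}_n^{\Lambda}/\mathcal{J}_{1,n}$, and by Theorem~\ref{iso thm}, $\mathcal{R}_n^{\Lambda}$ is graded isomorphic to $H(r,1,n)$, which carries the Hu--Mathas graded cellular basis \cite{hu2010graded} $\{\psi_{st}^{\mu}\}$ indexed by all $\mu\in\mathfrak{P}_n^{(r)}$ and $s,t\in Std(\mu)$, of the form $\psi_{st}^{\mu}=\psi_{d(s)}^{*}z_{\mu}e(t^{\mu})\psi_{d(t)}$ for a monomial $z_{\mu}$ in the $y$'s. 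I would first prove that $\mathcal{J}_{1,n}$ is a \emph{cell ideal}, namely that $\{\psi_{st}^{\mu}:\mu\notin\mathfrak{B}_n^{(r)}\}$ spans $\mathcal{J}_{1,n}$ and that this index set is a coideal for the ambient dominance order; then $B_{r,n}^{\Lambda}$ is automatically graded cellular with cell datum indexed by the downward-closed set $\mathfrak{B}_n^{(r)}$, and it remains only to identify the resulting basis with the $C_{st}^{\lambda}$ of Definition~\ref{Df28} and to check that the order $\unlhd$ of \S\ref{1cmltp} is admissible for it.

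First the formal points. Each $C_{st}^{\lambda}=\psi_{d(s)}^{*}e_{\lambda}\psi_{d(t)}$ is independent of the chosen reduced words modulo lower cellular terms $B_{<\lambda}$, because rewriting a reduced word for $d(t)$ alters $e_{\lambda}\psi_{d(t)}$ only by terms involving $y$'s or strictly shorter $\psi$-words via relations (\ref{25eq})--(\ref{26eq}), and on the fixed idempotent $e_{\lambda}$ such terms lie in $B_{<\lambda}$; since $*$ fixes every KLR generator, $e_{\lambda}^{*}=e_{\lambda}$ and $(C_{st}^{\lambda})^{*}=C_{ts}^{\lambda}$; homogeneity and $\deg C_{st}^{\lambda}=\deg s+\deg t$ follow from the homogeneity of the relations under (\ref{degf}) and the definition (\ref{degt}). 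The substantive steps are three. (i) Using the separation (\ref{reslmd}), I would show the correction $z_{\lambda}$ may be taken equal to $1$ for $\lambda\in\mathfrak{B}_n^{(r)}$: the $r$ components of a one-column $r$-partition have residues lying in pairwise-disjoint intervals of $I$, so $e_{\lambda}=e(res^{\Lambda}(t^{\lambda}))$ is already dominant and no Garnir/$y$-correction survives, whence the image of $\psi_{st}^{\lambda}$ in $B_{r,n}^{\Lambda}$ is precisely $C_{st}^{\lambda}$. (ii) I would compute the action of the generating idempotents $e(i,i+1,\mathbf{i})$ (for $(\alpha_i,\Lambda)>0$) on the cell modules $W(\mu)$ of $\mathcal{R}_n^{\Lambda}$: it is $0$ exactly when $\mu$ is one-column and a nonzero idempotent otherwise, which pins down $\mathcal{J}_{1,n}$ as the span of the non-one-column cellular layers. (iii) Finally I would verify the straightening rules (\ref{55}) and (\ref{81}) hold with respect to $\unlhd$; this is where the explicit description of $\unlhd$ on $\mathfrak{B}_n^{(r)}(k)$ in Lemmas~\ref{dw}, \ref{25}, \ref{26} and \ref{27} is used, together with a graded dimension count matching the known rank of the generalised blob algebra against $\sum_{\lambda\in\mathfrak{B}_n^{(r)}}\sum_{s,t\in Std(\lambda)}q^{\deg s+\deg t}$, which upgrades "spanning" to "basis".

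The main obstacle is the combination of (i) and (ii): showing simultaneously that the defining idempotents of the blob quotient cut out exactly the coideal of non-one-column multipartitions and that the naive elements $\psi_{d(s)}^{*}e_{\lambda}\psi_{d(t)}$ --- rather than the full Hu--Mathas elements --- descend to a genuine basis of $B_{r,n}^{\Lambda}$. Both rest on (\ref{reslmd}): the requirement that the subscripts of $\Lambda$ be pairwise more than $2$ apart forces the residue combinatorics of one-column $r$-partitions to decouple into $r$ independent one-column type-$A$ problems, for which $\psi$-word straightening is transparent and the failure (\ref{26eq}) of the braid relation never contributes. An alternative, entirely internal route would verify the cellular axioms directly inside $B_{r,n}^{\Lambda}$ by a straightening algorithm on products of $\psi$'s and idempotents modulo $\mathcal{J}_{1,n}$, by induction on degree; the crux, controlling the KLR rewriting, is the same.
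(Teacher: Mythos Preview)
This theorem is not proved in the paper; it is quoted from \cite[Theorem~38]{LOBOSMATURANA2020106277} and used as a black box. So there is no in-paper argument to compare against, only the proof in the cited source.

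Your primary route---inherit cellularity from the Hu--Mathas basis of $\mathcal{R}_n^{\Lambda}$ by exhibiting $\mathcal{J}_{1,n}$ as a cell ideal---has a genuine obstruction. The partial order $\unlhd$ of \S\ref{1cmltp} is \emph{incompatible} with the dominance order underlying the Hu--Mathas basis: for instance $((1^2),(1),\emptyset)\unlhd((1),(1^2),\emptyset)$ in the order of this paper (cf.\ Figure~\ref{fig-po}), whereas under dominance $((1^2),(1),\emptyset)$ lies strictly \emph{above} $((1),(1^2),\emptyset)$. Since the two orders are reversed on such pairs, a cellular structure for one cannot be transported to the other by restriction, and your step~(iii) is not a verification but the whole theorem. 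Your step~(ii) also fails as stated: the set of non-one-column $r$-partitions is neither an ideal nor a coideal for dominance---already for $r=2$, $n=2$, the non-one-column bipartition $(\emptyset,(2))$ sits strictly between the one-column bipartitions $(\emptyset,(1^2))$ and $((1),(1))$---so $\mathcal{J}_{1,n}$ is not a union of Hu--Mathas cell layers in the first place.

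What Lobos and Ryom-Hansen actually do is essentially your ``alternative, entirely internal route'': they establish a straightening algorithm for $\psi$-words times $e_{\lambda}$ directly in $B_{r,n}^{\Lambda}$, by induction on word length and via Garnir tableaux (parts of this machinery are restated in the present paper as Lemmas~\ref{lrh lm} and~\ref{42}), obtaining spanning and the cellular multiplication axiom with respect to $\unlhd$; linear independence is then a dimension count. The Hu--Mathas basis plays no role.
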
 

Readers may notice that the KLR generators $y_k$ are not used in the cellular basis. The following lemma shows that $y_ke_{\lambda}$ can be realised as a combination of lower terms.
	
	\begin{lem} $\label{lrh lm}$
		(\cite{LOBOSMATURANA2020106277}, Lemma 17, Lemma 37)
		
		For $\lambda \in \mathfrak{B}_n^{(r)}$ and $1\leq k \leq n$, we have
		\begin{equation}
			y_ke_{\lambda}=e_{\lambda}y_k=\sum_{\mu \triangleleft \lambda} D_{\mu}
		\end{equation}
	    where the sum runs over $\mu \in \mathfrak{B}_n^{(r)}$ and $D_{\mu}$ is in the two-sided ideal generated by $e_{\mu}$.
	    \end{lem}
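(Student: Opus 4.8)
The plan is the following. The equality $y_k e_\lambda = e_\lambda y_k$ is immediate from relation (\ref{18eq}), since $e_\lambda=e(res^\Lambda(t^\lambda))$ is one of the pairwise orthogonal idempotents $e(i)$; and to obtain a decomposition $y_k e_\lambda=\sum_{\mu\triangleleft\lambda}D_\mu$ with $D_\mu\in\langle e_\mu\rangle$ it is enough, a sum of two-sided ideals being the set of sums of its members, to show that $y_k e_\lambda\in\sum_{\mu\triangleleft\lambda}\langle e_\mu\rangle_{B_{r,n}}$. First I would reduce this via Theorem \ref{29}: since $d(t^\lambda)$ is the identity we have $C_{t^\lambda,t^\lambda}^\lambda=e_\lambda$, and each $C_{s,t}^\mu=\psi_{d(s)}^*e_\mu\psi_{d(t)}$ lies in $\langle e_\mu\rangle$, so by the cellular axiom (\ref{55}) the element $y_k e_\lambda$ is congruent, modulo the span of the lower cells, to $\sum_{s\in Std(\lambda)}r_{y_k}(s,t^\lambda)C_{s,t^\lambda}^\lambda$, where the $r_{y_k}(s,t^\lambda)$ are the coefficients of $y_k C_{t^\lambda}$ in the basis $\{C_s\}_{s\in Std(\lambda)}$ of the cell module $W(\lambda)$. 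Hence it suffices to prove that $y_k$ annihilates the generator $C_{t^\lambda}$ of $W(\lambda)$.

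The key input, which I would establish first, is a purely combinatorial uniqueness statement: under hypothesis (\ref{reslmd}), $t^\lambda$ is the \emph{only} standard tableau of shape $\lambda$ whose residue sequence (\ref{eqres}) equals $res^\Lambda(t^\lambda)$. By (\ref{eq75}) two distinct nodes $(a,1,l)$, $(a',1,l')$ of a one-column multipartition have equal residue only if $l\neq l'$ and $a-a'=i_l-i_{l'}$, so by (\ref{reslmd}) $|a-a'|>2$. Listing the nodes of $\lambda$ as $\gamma_1<\gamma_2<\cdots$ in the total order of Section \ref{1cmltp}, I would show by induction on $k$ that any $s\in Std(\lambda)$ with $res^\Lambda(s)=res^\Lambda(t^\lambda)$ puts the entry $k$ on $\gamma_k$: assuming $1,\dots,k-1$ fill $\gamma_1,\dots,\gamma_{k-1}$ and writing $\gamma_k=(a_k,1,l_k)$, any node $(b,1,l)\neq\gamma_k$ of $\lambda$ of residue $res^\Lambda(\gamma_k)$ satisfies $|b-a_k|>2$, and then either $b\leq a_k-3$, so that $(b,1,l)<\gamma_k$ is already occupied, or $b\geq a_k+3$, so that its neighbour $(b-1,1,l)$ exceeds $\gamma_k$ and is still empty, making $(b,1,l)$ non-addable; in either case the entry $k$ cannot be placed there, forcing it onto $\gamma_k$.

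Granting this, $C_{t^\lambda}$ is the unique basis vector of $W(\lambda)$ lying in the $res^\Lambda(t^\lambda)$-weight space for the commuting idempotents $e(i)$ (a standard property of this basis, via (\ref{sym})), so $e_\lambda W(\lambda)=R\,C_{t^\lambda}$; in particular $C_{t^\lambda}=e_\lambda C_{t^\lambda}$. Because $y_k$ commutes with $e_\lambda$ we get $y_k C_{t^\lambda}=e_\lambda(y_k C_{t^\lambda})\in R\,C_{t^\lambda}$, say $y_k C_{t^\lambda}=c\,C_{t^\lambda}$ with $c\in R$. Now $y_k$ is nilpotent in $\mathcal{R}_n^\Lambda$, hence in its quotient $B_{r,n}$: under Theorem \ref{iso thm} one has $y_k=\sum_i(1-q^{-i_k}X_k)e(i)$, and on each generalised eigenspace $M_i$ of (\ref{es}) the operator $1-q^{-i_k}X_k$ is a unit multiple of $X_k-q^{i_k}$, hence nilpotent. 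Therefore $c^N C_{t^\lambda}=y_k^N C_{t^\lambda}=0$ for large $N$, and since $R$ is a field we conclude $c=0$; that is, $y_k C_{t^\lambda}=0$ in $W(\lambda)$, which is exactly what was needed.

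I expect the only genuinely non-formal step to be the combinatorial uniqueness in the second paragraph: this is where the one-column hypothesis and the separation condition (\ref{reslmd}) are indispensable, and it is the substance of \cite[Lemmas 17 and 37]{LOBOSMATURANA2020106277}. (Alternatively one could run the whole argument by induction on $n$, using Lemma \ref{keylm} to pass from $\mathcal{R}_{n-1}^\Lambda$ to $\mathcal{R}_n^\Lambda$ and reduce to small $n$, but the bookkeeping with the non-unital embeddings $f_i$ is heavier.)
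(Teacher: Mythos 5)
The paper itself gives no proof of this lemma --- it is quoted from Lobos--Ryom-Hansen (their Lemmas 17 and 37) --- so any argument you give is necessarily a different route; judged on its own, yours is essentially correct, but two caveats matter. First, the logical status: you derive the lemma from Theorem \ref{29}, the graded cellular basis of $B_{r,n}$, whereas in the cited source these lemmas are ingredients in the proof of that cellularity theorem (their Theorem 38). Your argument therefore cannot replace the source's proof --- it is circular as a reconstruction --- and is only valid if Theorem \ref{29} is accepted as a black box, which is indeed how this paper treats it. Relatedly, your reduction lands $y_ke_\lambda$ in the span of the lower cellular basis elements of $B_{r,n}$, so you prove the identity in $B_{r,n}$ (hence in its quotients such as $TL_{r,1,n}$, which is all this paper ever uses), not in $\mathcal{R}_n^{\Lambda}\cong H(r,1,n)$ itself; note that the companion Lemma \ref{42} is phrased with ideals of $H(r,1,n)$, so the ambient algebra should be stated explicitly.

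Second, a small combinatorial slip in your key uniqueness step: it is not true that two distinct nodes of $[\lambda]$ can share a residue only when they lie in different components --- if $e>0$ and a column has length at least $e$, the nodes $(a,1,l)$ and $(a+e,1,l)$ have equal residue. This does no harm: under (\ref{reslmd}) with $r\ge 2$ one has $e=0$ or $e\ge 6$, so any two distinct nodes of equal residue satisfy $|a-a'|\ge 3$ in all cases (and for $r=1$ the claim is vacuous since $Std(\lambda)$ is a singleton), and your two-case addability analysis uses only $|a-a'|>2$, so it goes through verbatim. Granting that, the rest is sound: $e(i)C_s=\delta_{i,res^{\Lambda}(s)}C_s$ via (\ref{sym}) gives $e_\lambda W(\lambda)=R\,C_{t^\lambda}$, commutation of $y_k$ with $e_\lambda$ is (\ref{18eq}), nilpotency of $y_k$ holds (your eigenvalue argument, or simply because $y_k$ is homogeneous of positive degree in a finite-dimensional graded algebra), hence $y_kC_{t^\lambda}=0$ in $W(\lambda)$ and $y_ke_\lambda$ lies in the span of the $C^{\mu}_{u,v}$ with $\mu\triangleleft\lambda$, each of which sits in $\langle e_\mu\rangle$, which is the assertion. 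By contrast, the source proves the lemma directly from the KLR relations (Garnir-type manipulations) before any cellularity is available; your route buys brevity at the cost of depending on the strongest quoted input.
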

 
 We now concentrate on a truncation of this cellular structure. Let $\mathfrak{D}_n^{(r)}:=\coprod_{k\geq 3} \mathfrak{B}_n^{(r)}(k)$ be the subset of $\mathfrak{B}_n^{(r)}$ consisting of multipartitions with at least 3 non-empty components. By Lemma $\ref{dw}$, $\mathfrak{D}_n^{(r)}$ is downward closed with respect to the partial order $\unlhd $ defined by (\ref{podf}).
 \begin{lem} \label{triquo}
   Denote by $\mathcal{I}_{\mathfrak{D}}$ the two-sided ideal of $B_{r,n}$ generated by  $\{C_{s,t}^{\lambda}\}$ where $\lambda$ runs over $\mathfrak{D}_n^{(r)}$. Let $\mathcal{J}_{2,n}$ be the two-sided ideal in Corollary $\ref{blockdecom}$. Then we have	$\mathcal{I}_{\mathfrak{D}}=\mathcal{J}_{2,n}$.
 \end{lem}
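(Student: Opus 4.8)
The plan is to establish the two inclusions $\mathcal{I}_{\mathfrak{D}}\subseteq\mathcal{J}_{2,n}$ and $\mathcal{J}_{2,n}\subseteq\mathcal{I}_{\mathfrak{D}}$ separately, using the compatibility of the cellular basis of Theorem \ref{29} with the KLR idempotents. Throughout, write $\Lambda=\Lambda_{\kappa_1}+\dots+\Lambda_{\kappa_r}$ with $|\kappa_a-\kappa_b|>2$ for $a\neq b$, as required by (\ref{reslmd}); by (\ref{eq75}) a node $(a,1,l)$ of a one-column $r$-partition has residue $1-a+\kappa_l$. Since $\mathfrak{D}_n^{(r)}$ is downward closed (Lemma \ref{dw}), the discussion following Definition \ref{dfc} identifies $\mathcal{I}_{\mathfrak{D}}$ with the $R$-span of the basis elements $C_{s,t}^{\lambda}$ with $\lambda\in\mathfrak{D}_n^{(r)}$. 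The basic tool is the identity
\begin{equation*}
  e(\mathbf{j})\,C_{s,t}^{\lambda}=\delta_{\mathbf{j},\,res^{\Lambda}(s)}\,C_{s,t}^{\lambda},\qquad \mathbf{j}\in I^n,
\end{equation*}
valid in $B_{r,n}$; I would deduce it from Definition \ref{Df28}, relation (\ref{sym}), orthogonality of the $e(\mathbf{i})$, and the fact that $res^{\Lambda}(s)$ is the image of $res^{\Lambda}(t^{\lambda})$ under the permutation $d(s)$ coming from $s=t^{\lambda}\circ d(s)$.

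For $\mathcal{I}_{\mathfrak{D}}\subseteq\mathcal{J}_{2,n}$, fix $\lambda\in\mathfrak{D}_n^{(r)}$ and let $k_1<k_2<k_3<\cdots$ list the indices of its non-empty components (there are at least three). In the total order of Section \ref{1cmltp}, the three smallest nodes of $\lambda$ are $(1,1,k_1),(1,1,k_2),(1,1,k_3)$, so $t^{\lambda}(1)=(1,1,k_1)$, $t^{\lambda}(2)=(1,1,k_2)$ and $t^{\lambda}(3)=(1,1,k_3)$; consequently $res^{\Lambda}(t^{\lambda})=(\kappa_{k_1},\kappa_{k_2},\kappa_{k_3},\mathbf{i})$ for some $\mathbf{i}\in I^{n-3}$ with $\kappa_{k_1},\kappa_{k_2},\kappa_{k_3}$ pairwise distinct and $(\alpha_{\kappa_{k_j}},\Lambda)=1>0$. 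Hence $e_{\lambda}=e(res^{\Lambda}(t^{\lambda}))$ is one of the generators of $\mathcal{J}_{2,n}$, so $C_{s,t}^{\lambda}=\psi_{d(s)}^{*}e_{\lambda}\psi_{d(t)}\in\mathcal{J}_{2,n}$ for all $s,t$, and taking $R$-spans gives the inclusion.

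For $\mathcal{J}_{2,n}\subseteq\mathcal{I}_{\mathfrak{D}}$, it suffices to treat a single generator $g=e(\mathbf{j})$, $\mathbf{j}=(i_1,i_2,i_3,\mathbf{i})$, with $i_1,i_2,i_3$ pairwise distinct and $(\alpha_{i_j},\Lambda)>0$. Expanding $g$ in the cellular basis and multiplying by $g=g^2$ on the left, the displayed identity shows that $g$ is an $R$-linear combination of basis elements $C_{s,t}^{\lambda}$ with $res^{\Lambda}(s)=\mathbf{j}$; so it is enough to show that any one-column $r$-partition $\lambda$ admitting a standard tableau $s$ with $res^{\Lambda}(s)=\mathbf{j}$ lies in $\mathfrak{D}_n^{(r)}$. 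In such an $s$, entries strictly increase down each column, so the entry $1$ occupies a top node $(1,1,l_1)$ of a non-empty component. If $2$ sat in a node below the first row, the node immediately above it would carry a smaller entry, forcing that node to be $(1,1,l_1)$ and hence the cell of $2$ to be $(2,1,l_1)$, of residue $\kappa_{l_1}-1$; but $\kappa_{l_1}-1$ is not equal to any $\kappa_b$ since $|\kappa_a-\kappa_b|>2$, contradicting $(\alpha_{i_2},\Lambda)>0$. Hence $2$ occupies $(1,1,l_2)$ with $l_2\neq l_1$; repeating the argument for $3$ (whose cell, if below the first row, would have to lie immediately under the cell of $1$ or of $2$) forces $3$ into $(1,1,l_3)$ with $l_3\notin\{l_1,l_2\}$. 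Thus $\lambda$ has at least three non-empty components, i.e. $\lambda\in\mathfrak{D}_n^{(r)}$, so $g\in\mathcal{I}_{\mathfrak{D}}$; since these $g$ generate $\mathcal{J}_{2,n}$ and $\mathcal{I}_{\mathfrak{D}}$ is a two-sided ideal, we conclude $\mathcal{J}_{2,n}\subseteq\mathcal{I}_{\mathfrak{D}}$.

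The only genuinely delicate point is the combinatorial step in the last paragraph: that $(\alpha_{i_j},\Lambda)>0$ for $j=1,2,3$ forces the entries $1,2,3$ of $s$ to lie at the tops of three distinct components, for which the separation hypothesis $|\kappa_a-\kappa_b|>2$ on $\Lambda$ is exactly what is needed. The remaining steps are bookkeeping with the graded cellular basis of Theorem \ref{29} and the defining relations of the KLR algebra.
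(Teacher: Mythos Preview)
Your proof is correct and follows essentially the same approach as the paper. Both directions rest on the identity $e(\mathbf{j})\,C_{s,t}^{\lambda}=\delta_{\mathbf{j},res^{\Lambda}(s)}\,C_{s,t}^{\lambda}$ together with the combinatorial observation that, under the separation hypothesis $|\kappa_a-\kappa_b|>2$, a standard tableau $s$ can have $res^{\Lambda}(s)=(i_1,i_2,i_3,\mathbf{i})$ with all $(\alpha_{i_j},\Lambda)>0$ only when $1,2,3$ occupy the top nodes of three distinct components. The paper phrases the second inclusion as showing that each generator of $\mathcal{J}_{2,n}$ annihilates every basis element of the quotient $B_{r,n}/\mathcal{I}_{\mathfrak{D}}$ (splitting into the cases $\lambda\in\mathfrak{B}_n^{(r)}(1)$ and $\lambda\in\mathfrak{B}_n^{(r)}(2)$), whereas you expand the generator in the cellular basis and show that only terms with $\lambda\in\mathfrak{D}_n^{(r)}$ survive; these are dual formulations of the same argument.
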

\begin{proof}
	By the definition of $t^{\lambda}$, for any $\lambda \in\mathfrak{D}_n^{(r)}$, $e(t^{\lambda})$ is of the form $e(i_1,i_2,i_3,\mathbf{i})$ where $(\alpha_{i_j},\Lambda)>0$ for $j=1,2,3$ and $\mathbf{i}\in I^{n-3}$. Therefore, $C_{s,t}^{\lambda}\in \mathcal{J}_{2,n}$. So $\mathcal{I}_{\mathfrak{D}}$ is contained in $\mathcal{J}_{2,n}$.
	
	We next show that $e(i_1,i_2,i_3,\mathbf{i})\in \mathcal{I}_{\mathfrak{D}}$ where $(\alpha_{i_j},\Lambda)>0$ for $j=1,2,3$ and $\mathbf{i}\in I^{n-3}$. 
	We prove this by showing that the image of $e(i_1,i_2,i_3,\mathbf{i})$ in the truncation $B_{r,n}/\mathcal{I}_{\mathfrak{D}}$ is trivial.
	
	Let $\lambda\in \coprod_{k= 1,2} \mathfrak{B}_n^{(r)}(k)$, $s,t\in Std(\lambda)$ and $C_{s,t}^{\lambda}$ is the element defined in the cellular basis in Definition $\ref{Df28}$. 
	
	If $\lambda\in \mathfrak{B}_n^{(r)}(1)$, we have $s=t=t^{\lambda}$ and $C_{s,t}^{\lambda}=e_{\lambda}=e(j,j-1,j-2,\dots,j+1-n)$ for some $j\in I$. The condition $(\ref{pmres})$ implies that $|i_1-i_2|\neq 1$. Therefore, $e(i_1,i_2,i_3,\mathbf{i})\neq e_{\lambda}$ which implies
	\begin{equation}\label{orth1}
	    e(i_1,i_2,i_3,\mathbf{i}) e_{\lambda}=e(i_1,i_2,i_3,\mathbf{i})C_{s,t}^{\lambda}=0.
	\end{equation}
	
	If $\lambda\in \mathfrak{B}_n^{(r)}(2)$, we have 
	\begin{equation*}
	    C_{s,t}^{\lambda}=\psi_{d(s)}^* e_{\lambda}\psi_{d(t)}=e(s)\psi_{d(s)}^*\psi_{d(t)}.
	\end{equation*}
	Let $e(j_1,j_2,j_3,\dots,j_n)=e(s)$. As $s\in Std(\lambda)$ and $\lambda\in \mathfrak{B}_n^{(r)}(2)$, two of $1,2$ and $3$ must be in the same column of $s$ which implies $j_1=j_2+1$, $j_2=j_3+1$ or $j_1=j_3+1$. Comparing with the restriction on $i_1$, $i_2$ and $i_3$, we have $e(i_1,i_2,i_3,\mathbf{i})\neq e(s)$. Therefore,
	\begin{equation}\label{orth2}
	    e(i_1,i_2,i_3,\mathbf{i}) C_{s,t}^{\lambda} =0.
	\end{equation}
	
	As $B_{r,n}/\mathcal{I}_{\mathfrak{D}}$ is a truncation of $B_{r,n}$ with respect to the cellular structure in Theorem $\ref{29}$ and the ideal $\mathfrak{D}_n^{(r)}:=\coprod_{k\geq 3} \mathfrak{B}_n^{(r)}(k)$, $\{C_{s,t}^{\lambda}| \lambda\in  \coprod_{k= 1,2} \mathfrak{B}_n^{(r)}(k)\}$ is a cellular basis of it. As $B_{r,n}$ is an algebra with $1$, $(\ref{orth1})$ and $(\ref{orth2})$ imply that the image of $e(i_1,i_2,i_3,\mathbf{i})$ in this truncation is $0$, so $e(i_1,i_2,i_3,\mathbf{i})\in \mathcal{I}_{\mathfrak{D}}$. Since the ideal $\mathcal{J}_{2,n}$ is generated by $e(i_1,i_2,i_3,\mathbf{i})$, we have $\mathcal{J}_{2,n}\subseteq\mathcal{I}_{\mathfrak{D}}$.
	%By Lemma $\ref{lrh lm}$, we have
%	\begin{equation} \label{dcmps}
%		e(i_1,i_2,i_3,\mathbf{i})=\sum_{\mu|_3\unlhd\lambda}D_{\mu}
%	\end{equation}
%where $\mu|_3$ is the multipartition consisting of the 3 smallest nodes in $\mu$ and $D_{\mu}$ is contained in the two-sided ideal generated by $e(t^{\mu})$. By definition, $\mu|_3\unlhd\lambda$ implies $\mu$ has at least 3 non-empty components. So $D_{\mu}\in \mathcal{I}_{\mathfrak{D}}$. By $(\ref{dcmps})$, $e(i_1,i_2,i_3,\mathbf{i})\in \mathcal{I}_{\mathfrak{D}}$. So $\mathcal{J}_{n}\subseteq \mathcal{I}_{\mathfrak{D}}$.

Therefore, $\mathcal{J}_{2,n}=\mathcal{I}_{\mathfrak{D}}$.
\end{proof}

As an immediate consequence of this lemma and Theorem $\ref{29}$, a cellular structure of $TL_{r,1,n}$ can be obtained by a truncation of that of the generalised blob algebra $B_{r,n}$. In particular,
\begin{thm} $\label{cb33}$
  Let $\mathfrak{B}_n^{(r)}$ be the set of one-column $r$-partitions of $n$ and $\mathfrak{D}_n^{(r)}$ be the subset consisting multipartitions with more than two non-empty components.
  For $\lambda\in \mathfrak{B}_n^{(r)}-\mathfrak{D}_n^{(r)}$ and $s,t\in Std(\lambda)$, let $C_{s,t}^{\lambda}$ be the element defined in $(\ref{76})$ and $deg$ be the degree function on the tableaux defined in $(\ref{degt})$. Let $TL_{r,1,n}$ be the generalised Temperley-Lieb algebra over a field $R$ of characteristic $0$ defined in Definition \ref{TL} (cf. Theorem \ref{dftl}) and $*$ be the anti-automorphism of $TL_{r,1,n}$ fixing the KLR generators.
	Then $TL_{r,1,n}$ is a graded cellular algebra with the cell datum $(\mathfrak{B}_n^{(r)}-\mathfrak{D}_n^{(r)},*,Std,C,deg)$ with respect to the partial order $\unlhd $ defined by (\ref{podf}). Specifically, for any $a\in TL_{r,1,n}$, $\lambda\in \mathfrak{B}_n^{(r)}-\mathfrak{D}_n^{(r)}$ and $s,t\in Std (\lambda)$, we have
	\begin{equation}
		aC_{s,t}^{\lambda}=\sum_{s'\in Std(\lambda)}r_a(s',s)C_{s',t}^{\lambda}+\sum_{\mu \triangleleft \lambda,u,v\in Std(\mu)}c_a(s,t,u,v)C_{u,v}^{\mu}
	\end{equation}
    where $r_a(s',s)\in R$ does not depend on $t$ and $c_a(s,t,u,v)\in R$.
\end{thm}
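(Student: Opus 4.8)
\emph{Proof proposal.} The plan is to realise $TL_{r,1,n}$ as the quotient of the graded cellular algebra $B_{r,n}$ by a \emph{cell ideal} attached to a downward closed subset of the indexing poset, and then to verify directly that such a quotient inherits a graded cell datum obtained by deleting that subset. All the genuinely new work has already been done in Lemma \ref{triquo}; what remains is bookkeeping with Definition \ref{dfc}.

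First I would record that $\mathfrak{D}_n^{(r)}=\coprod_{k\geq 3}\mathfrak{B}_n^{(r)}(k)$ is downward closed in $(\mathfrak{B}_n^{(r)},\unlhd)$: this is exactly Lemma \ref{dw}, since $\lambda\unlhd\mu$ forces the number of non-empty components of $\lambda$ to be at least that of $\mu$. By condition (3) of Definition \ref{dfc} applied to $B_{r,n}$ (graded cellular by Theorem \ref{29}) and its right-handed analogue obtained by applying $*$, the $R$-span
\begin{equation*}
\mathcal{I}_{\mathfrak{D}}=\langle C_{s,t}^{\lambda}\mid \lambda\in\mathfrak{D}_n^{(r)},\ s,t\in Std(\lambda)\rangle_R
\end{equation*}
is a two-sided ideal of $B_{r,n}$; it is spanned by a subset of the cellular basis, it is homogeneous for the degree function $deg$ of $(\ref{degt})$ because each $C_{s,t}^{\lambda}$ is homogeneous, and it is stable under $*$ since $*(C_{s,t}^{\lambda})=C_{t,s}^{\lambda}$. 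By Lemma \ref{triquo} we have $\mathcal{I}_{\mathfrak{D}}=\mathcal{J}_{2,n}$, and by Corollary \ref{blockdecom} (equivalently Theorem \ref{dftl}) we get $TL_{r,1,n}\cong B_{r,n}/\mathcal{J}_{2,n}=B_{r,n}/\mathcal{I}_{\mathfrak{D}}$.

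Next I would carry out the descent. Put $\Lambda':=\mathfrak{B}_n^{(r)}-\mathfrak{D}_n^{(r)}$ with the induced order, and by abuse of notation write $C_{s,t}^{\lambda}$ also for the image in $TL_{r,1,n}$ of the element $(\ref{76})$. Since $R$ is a field and $\{C_{s,t}^{\lambda}\}_{\lambda\in\mathfrak{B}_n^{(r)}}$ is an $R$-basis of $B_{r,n}$ with the subfamily indexed by $\mathfrak{D}_n^{(r)}$ a basis of $\mathcal{I}_{\mathfrak{D}}$, the family $\{C_{s,t}^{\lambda}\mid\lambda\in\Lambda',\ s,t\in Std(\lambda)\}$ is an $R$-basis of $TL_{r,1,n}$, giving axiom (1); the $*$-stability of $\mathcal{I}_{\mathfrak{D}}$ makes $*$ descend and preserves $*(C_{s,t}^{\lambda})=C_{t,s}^{\lambda}$, giving axiom (2). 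For axiom (3), given $a\in TL_{r,1,n}$ lift it to $\tilde a\in B_{r,n}$ and apply $(\ref{55})$ in $B_{r,n}$: modulo $(B_{r,n})_{<\lambda}$ we have $\tilde a\,C_{s,t}^{\lambda}=\sum_{s'}r_{\tilde a}(s',s)C_{s',t}^{\lambda}$ with $r_{\tilde a}(s',s)$ independent of $t$. Now $(B_{r,n})_{<\lambda}$ is spanned by the $C_{u,v}^{\mu}$ with $\mu\triangleleft\lambda$; those with $\mu\in\mathfrak{D}_n^{(r)}$ lie in $\mathcal{I}_{\mathfrak{D}}$ and hence die in $TL_{r,1,n}$, so reducing modulo $\mathcal{I}_{\mathfrak{D}}$ yields
\begin{equation*}
a\,C_{s,t}^{\lambda}=\sum_{s'\in Std(\lambda)}r_a(s',s)\,C_{s',t}^{\lambda}+\sum_{\mu\triangleleft\lambda,\,u,v\in Std(\mu)}c_a(s,t,u,v)\,C_{u,v}^{\mu},
\end{equation*}
where $r_a(s',s)$ is the reduction of $r_{\tilde a}(s',s)$ and is therefore independent of $t$, and the second sum effectively runs over $\mu\in\Lambda'$. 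This is both axiom (3) for the poset $\Lambda'$ and the explicit multiplication formula claimed in the theorem. Finally the degree function $deg$ of $(\ref{degt})$, restricted to $\bigsqcup_{\lambda\in\Lambda'}Std(\lambda)$, still satisfies $deg(C_{s,t}^{\lambda})=deg(s)+deg(t)$, so $(\Lambda',*,Std,C,deg)$ is a graded cell datum for $TL_{r,1,n}$.

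The only steps requiring care are that reduction modulo $\mathcal{I}_{\mathfrak{D}}$ sends cellular basis elements to cellular basis elements or to $0$ (which is what makes the straightening transport cleanly and keeps $r_a(s',s)$ independent of $t$), and that no term indexed by $\mu\in\mathfrak{D}_n^{(r)}$ survives in the quotient; both are immediate from the fact that $\mathcal{I}_{\mathfrak{D}}$ is spanned by a sub-basis. I should also note that this truncation makes sense for every $r$; the hypothesis $r\geq 2$ appears only because $TL_{1,1,n}$ is treated separately in Example 1.4 of \cite{Lehrer1996}.
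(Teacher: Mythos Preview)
Your proposal is correct and follows exactly the paper's approach: the paper states this theorem as an immediate consequence of Lemma \ref{triquo} and Theorem \ref{29}, namely that the kernel is the cell ideal attached to the downward closed set $\mathfrak{D}_n^{(r)}$, so the quotient inherits the truncated graded cell datum. You have simply written out in full the standard ``truncation by a downward closed ideal'' argument that the paper leaves implicit.
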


Notice that the cellular bases of $B_{r,n}^{\Lambda}$ and $TL_{r,1,n}^{\Lambda}$ are heavily dependent on the fixed reduced expression for each $d(t)$. In order to facilitate calculations, we fix a special set of reduced expressions, called ``official'', for the $d(t)$, which satisfy the following property. 

\begin{defn}
	Let $\lambda\in  \mathfrak{B}_n^{(r)}$ be a one-column multipartition and for $t\in Std(\lambda)$,  denote by $d(t)$ the permutation such that $t=t^{\lambda}\circ d(t)$. A set of fixed reduced expressions $ORE(\lambda):=\{d(t)=s_{j_1}s_{j_2}\dots s_{j_m}|t\in Std(\lambda) \}$ is called official if for each $t\in Std(\lambda)$, there is a unique reduced expression $d(t)=s_{j_1}s_{j_2}\dots s_{j_m}\in ORE(\lambda)$ and $s_{j_1}s_{j_2}\dots s_{j_{m-1}}\in ORE(\lambda)$ is a reduced expression of $d(s)$ for some $s\in Std(\lambda)$.
\end{defn}

In order to prove the existence of an official set of reduced expressions, we introduce a partial order on $Tab(\lambda)$, the set of tableaux of shape $\lambda$. For $t\in Tab(\lambda)$, denote by $t|_k$ the tableau obtained by deleting $k+1,k+2,\dots,n$ from $t$.
 For example, if $t$ is the tableau in Figure-$\ref{oct}$, then $t|_5$ is as in Figure-$\ref{rst}$.
 \begin{figure}[h]
	\begin{center}
		\begin{tikzpicture}[scale=1]
			%% draw straight lines where required
			\draw (0,1)--(0,4);\draw (1,1)--(1,4);\draw (4,2)--(4,4);\draw (5,2)--(5,4);
			\foreach \x in {1,2,3,4}
			\draw (0,\x)--(1,\x);
			\foreach \x in {2,3,4}
			\draw (4,\x)--(5,\x);

			%% fill in the boxes
			\draw node at (0.5,3.5){1};\draw node at (4.5,3.5){2};\draw node at (0.5,1.5){5};
			\draw node at (4.5,2.5){3};\draw node at (0.5,2.5){4};
		    \draw node at (8.5,3.5){$\emptyset$};
			
		\end{tikzpicture}
	
	\end{center}
	\caption{A restriction to the tableau $t$ } \label{rst}
\end{figure}
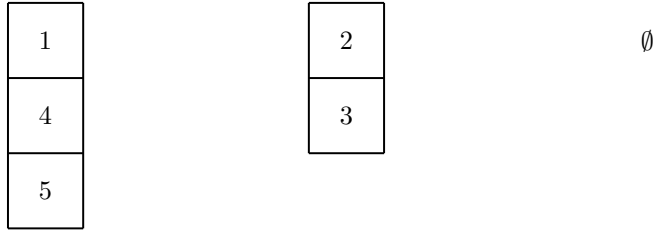
 
 For $s,t\in Tab(\lambda)$, we say $s\unlhd t$ if
\begin{equation} \label{potab}
	shape(s|_k)  \unlhd shape(t|_k) 
\end{equation}
for all $1\leq k\leq n$, where $\unlhd$ is the partial order defined by (\ref{podf}). Note that $shape(s|_k)$ is not necessarily a multipartition since $s$ can be non-standard but we can still define a partial order on the set consisting $shape(s|_k)$ for all $s\in Tab(\lambda)$ using (\ref{podf}). Let $\leq $ be the Bruhat order on $\mathfrak{S}_n$. The following Theorem builds a connection between the posets of tableaux and the elements in the symmetric group.
\begin{thm}
    (\cite{LOBOSMATURANA2020106277}, Theorem 4)	Let $\lambda\in  \mathfrak{B}_n^{(r)}$ be a one-column multipartition and $s,t\in Tab(\lambda)$. Then $d(s)\leq d(t)$ if and only if $s\unlhd t$.
\end{thm}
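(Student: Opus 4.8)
The plan is to reduce the assertion to a classical combinatorial fact about $\mathfrak{S}_n$ — the Ehresmann--Bruhat (``tableau'') criterion for the Bruhat order — after unwinding what $\unlhd$ means on $Tab(\lambda)$. First I would fix notation. Since each component of $\lambda$ is a single column the order $\leq$ on nodes is total, so write $[\lambda]=\{\gamma_1<\gamma_2<\dots<\gamma_n\}$; by definition $t^\lambda(i)=\gamma_i$, hence for any $u\in Tab(\lambda)$ the relation $u=t^\lambda\circ d(u)$ becomes $u(j)=\gamma_{d(u)(j)}$. In particular $d\colon Tab(\lambda)\to\mathfrak{S}_n$ is a bijection, so the statement in fact ranges over all of $\mathfrak{S}_n$ and non-standard tableaux require no separate argument. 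As $i\mapsto\gamma_i$ is an order isomorphism from $(\{1,\dots,n\},\leq)$ to $([\lambda],\leq)$, the $k$-node set $shape(u|_k)=\{u(1),\dots,u(k)\}$ corresponds under it to the integer set $D_k(u):=\{d(u)(1),\dots,d(u)(k)\}$.

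Next I would record an elementary ``dominance of subsets'' lemma (compare Lemmas \ref{25}, \ref{26} and \ref{27}): if $A=\{a_1<\dots<a_k\}$ and $B=\{b_1<\dots<b_k\}$ are $k$-element subsets of a totally ordered set, then $A\unlhd B$ in the natural extension of $(\ref{podf})$ — that is, $|\{a\in A:a\leq\gamma_0\}|\geq|\{b\in B:b\leq\gamma_0\}|$ for all $\gamma_0$ — if and only if $a_i\leq b_i$ for every $i$. The converse direction is immediate, and the forward direction follows by taking $\gamma_0=b_i$ at an index where $a_i>b_i$. Applying this with $A=shape(s|_k)$ and $B=shape(t|_k)$ and transporting along $i\mapsto\gamma_i$, I would get that $shape(s|_k)\unlhd shape(t|_k)$ holds if and only if the increasing rearrangement of $D_k(s)$ is entrywise $\leq$ the increasing rearrangement of $D_k(t)$.

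Finally I would invoke the Ehresmann--Bruhat criterion (a classical fact; see, e.g., Bj\"orner--Brenti, \emph{Combinatorics of Coxeter Groups}): for $w,w'\in\mathfrak{S}_n$ one has $w\leq w'$ in the Bruhat order precisely when, for every $k$, the increasing rearrangement of $\{w(1),\dots,w(k)\}$ is entrywise $\leq$ that of $\{w'(1),\dots,w'(k)\}$ (the case $k=n$ being automatic). Taking $w=d(s)$ and $w'=d(t)$ and combining with the previous step and the definition $s\unlhd t\iff shape(s|_k)\unlhd shape(t|_k)$ for all $k$, one obtains $s\unlhd t\iff d(s)\leq d(t)$, as required.

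There is no real obstacle here: the heart of the matter is simply that, after the bijection $d$, the partial order $\unlhd$ on tableaux of a one-column multipartition \emph{is} the tableau criterion for the Bruhat order. The one place calling for care is keeping the three orders — the node order on $[\lambda]$, the integer order, and the Bruhat order — aligned, together with the convention for the $\mathfrak{S}_n$-action defining $d(u)$: with $u(j)=\gamma_{d(u)(j)}$ the bookkeeping matches the statement exactly, whereas the opposite convention would produce the condition $d(s)^{-1}\leq d(t)^{-1}$ from the ``column'' form of the same criterion, which is equivalent since $w\mapsto w^{-1}$ preserves the Bruhat order.
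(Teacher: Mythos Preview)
The paper does not prove this statement; it is merely cited as Theorem~4 of \cite{LOBOSMATURANA2020106277}. Your argument is correct and is essentially the intended one: once the total order on the nodes of a one-column multipartition is used to identify $[\lambda]$ with $\{1,\dots,n\}$ via $i\mapsto\gamma_i$, the definition (\ref{podf}) applied to $shape(u|_k)$ becomes the usual dominance of the sets $\{d(u)(1),\dots,d(u)(k)\}$, and the equivalence $s\unlhd t\iff d(s)\leq d(t)$ is then exactly the Ehresmann tableau criterion for the Bruhat order on $\mathfrak{S}_n$. Your remark about the convention for the $\mathfrak{S}_n$-action is also on point and matches the paper's $t=t^{\lambda}\circ d(t)$.
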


We next show how to construct an official set of reduced expressions $ORE(\lambda)$ for each $\lambda\in  \mathfrak{B}_n^{(r)}$. We do this progressively in the set $Std(\lambda)$, starting with the tableau with a trivial permutation. Denote by $S$ the set of tableaux corresponding to which we have chosen reduced expressions. We start with $S=\{t^{\lambda} \}$ and the corresponding set of reduced expressions is $ore=\{d(t^{\lambda})=1 \}$. 

If $S=Std(\lambda)$ then let $ORE(\lambda)=ore$ and $ORE(\lambda)$ is an official set of reduced expressions. 

Otherwise, choose $t\in Std(\lambda)-S$ such that $l(d(t))=m$ is the smallest among permutations corresponding to the left standard tableaux. As $t^{\lambda}\in S$, $t\neq t^{\lambda}$. Let $1\leq i\leq n$ be the smallest number such that $t(i)>t(i+1)$.
Denote $s=t\circ s_i$. Then by definition, $s\triangleleft t$. So $d(s)<d(t)$. On the other hand, $t(i)>t(i+1)$ implies that $i$ is not above $i+1$, so $s\in Std(\lambda)$. Since $l(d(t))$ is the smallest, we have $s\in S$. Thus there exists $d(s)=s_{j_1}s_{j_2}\dots s_{j_{m-1}}\in ore$. Add $t$ to $S$ as well as $d(t)=s_{j_1}s_{j_2}\dots s_{j_{m}}$ to $ore$ where $j_m=i$ defined above. Then check whether $S=Std(\lambda)$. If not, repeat this procedure.

As there are finitely many elements in $Std(\lambda)$, the algorithm above confirms the existence of an official set of reduced expressions. 

It should be remarked that the official set is not unique. As a counterexample, let $\lambda=((1^2),(1^2))$. There are 6 standard tableaux of shape $\lambda$. Denote by $t_{ij}$ the tableau with $i$ and $j$ in the first component. There are two official sets of reduced expressions:
\begin{align*}
    \textit{Standard tableaux}:& &t_{13}&&t_{12}&&t_{14}&&t_{23}&&t_{24}&&t_{34}\\
   \textit{ Official set 1}:&&1&&s_2&&s_3&&s_1&&s_1s_3&&s_1s_3s_2\\
   \textit{ Official set 2}:&&1&&s_2&&s_3&&s_1&&s_3s_1&&s_3s_1s_2.
\end{align*}

In the following sections, we fix one official set of reduced expressions $ORE(\lambda)$ for all $\lambda\in \mathfrak{B}_n^{(r)}$. Observe that the `official' property is one of the whole set $\{d(t)=s_{j_1}s_{j_2}\dots s_{j_m}|t\in Std(\lambda) \}$ rather than of a single permutation corresponding to some tableau. In the following sections, we always assume that the chosen set of reduced expressions for the cellular basis is official. To make the statement briefer, we also call these reduced expressions the official ones.

\section{The cell modules and semisimplicity of $TL_{r,1,n}$}

In this section, we first calculate the dimensions of the cell modules of $TL_{r,1,n}$ when $r\geq 2$ and then give several equivalent conditions for the semisimplicity of $TL_{r,1,n}$.
Let $\lambda\in \mathfrak{B}_n^{(r)}-\mathfrak{D}_n^{(r)}$ be a one-column multipartition of $n$ with at most two non-empty components (cf. Theorem $\ref{cb33}$) and $W(\lambda)$ be the corresponding cell module of $TL_{r,1,n}$ with respect to the cell datum in Theorem \ref{cb33}. 
Denote by $a_1$ the number of nodes in the first non-empty component of $\lambda$. Then the number of standard tableaux of shape $\lambda$ is $\binom{n}{ a_1}$. Hence we have
\begin{equation}
	dim(W(\lambda))= \binom{n}{ a_1}.
\end{equation}
Denoting by $TL(\{\lambda \})$ the corresponding $TL_{r,1,n}$-bimodule, then
\begin{equation}
	dim(TL(\{\lambda \}))=\binom{n}{ a_1}^2.
\end{equation}

Further,
\begin{equation*}
 \begin{aligned}
	dim(TL_{r,1,n})&=\sum_{\lambda\in\mathfrak{B}_n^{(r)}-\mathfrak{D}_n^{(r)}} dim(TL(\{\lambda \}))\\
	             &=\sum_{\lambda\in\mathfrak{B}_n^{(r)}(1)} dim(TL(\{\lambda \}))+\sum_{\lambda\in\mathfrak{B}_n^{(r)}(2)} dim(TL(\{\lambda \}))\\
	             &=r+\binom{r}{2}(\sum_{a_1=1}^{n-1}\binom{n}{ a_1}^2)\\
	             &=r+\binom{r}{2}(\binom{2n}{n}-2)\\
	             &=\binom{r}{2}\binom{2n}{n}-r^2+2r.
\end{aligned}   
\end{equation*}

In the case where $r=2$, we have $dim(TL_{2,1,n})=\binom{2n}{n}$, which is the dimension of $TLB_n(q,Q)$, the Temperley-Lieb algebra of type $B_n$. 
We next concentrate on the irreducible representations of $TL_{r,1,n}$. A direct consequence of the definition of the cellular basis given in $(\ref{76})$ is as follows:
\begin{lem}$\label{34}$
	Let $\phi_{\lambda}(,)$ be the bilinear form defined in $(\ref{81})$. For any $\lambda\in \mathfrak{B}_n^{(r)}-\mathfrak{D}_n^{(r)}$,  $ \phi_{\lambda}\neq 0$.
\end{lem}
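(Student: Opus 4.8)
The plan is to show that $\phi_\lambda$ does not vanish identically by evaluating it at one explicitly chosen pair of cell-module basis vectors, namely $(C_{t^\lambda}, C_{t^\lambda})$, where $t^\lambda$ is the distinguished standard tableau of shape $\lambda$ introduced in Section \ref{1cmltp}, characterised by $t^\lambda(i) < t^\lambda(j)$ whenever $i < j$. This tableau is singled out because $d(t^\lambda) = 1$, so that the associated cellular basis element is as simple as possible.

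Carrying this out: since $\psi_{d(t^\lambda)}$ is the empty product, the definition $(\ref{76})$ gives $C_{t^\lambda,t^\lambda}^\lambda = \psi_{d(t^\lambda)}^* e_\lambda \psi_{d(t^\lambda)} = e_\lambda$. As $e_\lambda = e(res^{\Lambda}(t^\lambda))$ is one of the orthogonal KLR idempotents $e(\mathbf{i})$, it satisfies $e_\lambda^2 = e_\lambda$, whence $C_{t^\lambda,t^\lambda}^\lambda C_{t^\lambda,t^\lambda}^\lambda = e_\lambda = C_{t^\lambda,t^\lambda}^\lambda$. Specialising the defining relation $(\ref{81})$ of $\phi_\lambda$ to $s' = t = s = t' = t^\lambda$ then shows that $\phi_\lambda(C_{t^\lambda}, C_{t^\lambda})\, C_{t^\lambda,t^\lambda}^\lambda = C_{t^\lambda,t^\lambda}^\lambda$ modulo $A_{<\lambda}$, where $A = TL_{r,1,n}$. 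Hence, as soon as one knows that $C_{t^\lambda,t^\lambda}^\lambda \notin A_{<\lambda}$, it follows that $\phi_\lambda(C_{t^\lambda}, C_{t^\lambda}) = 1 \neq 0$, and therefore $\phi_\lambda \neq 0$.

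The only point requiring justification --- and the only place where the hypothesis $\lambda \in \mathfrak{B}_n^{(r)} - \mathfrak{D}_n^{(r)}$ enters --- is the claim that $C_{t^\lambda,t^\lambda}^\lambda$ is not contained in $A_{<\lambda}$. This is immediate from Theorem $\ref{cb33}$: the set $\{C_{s,t}^\mu \mid \mu \in \mathfrak{B}_n^{(r)} - \mathfrak{D}_n^{(r)},\ s,t\in Std(\mu)\}$ is an $R$-basis of $TL_{r,1,n}$, the subspace $A_{<\lambda}$ is by definition spanned by those $C_{u,v}^\mu$ with $\mu \triangleleft \lambda$, and $\lambda \not\triangleleft \lambda$; thus the basis vector $C_{t^\lambda,t^\lambda}^\lambda$ is linearly independent of $A_{<\lambda}$, in particular nonzero and not lying in it. I anticipate no genuine obstacle here: once the truncated cellular basis of Theorem $\ref{cb33}$ is in hand, the lemma is a purely formal consequence of the facts that $e_\lambda$ is idempotent and occurs as the cell basis element $C_{t^\lambda,t^\lambda}^\lambda$ for every $\lambda$ indexing a surviving cell.
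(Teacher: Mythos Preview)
Your proof is correct and follows essentially the same approach as the paper: both compute $\phi_\lambda(t^\lambda,t^\lambda)=1$ by using $d(t^\lambda)=1$ so that $C^\lambda_{t^\lambda,t^\lambda}=e_\lambda$ and then exploiting $e_\lambda^2=e_\lambda$. The paper carries out the computation with general outer indices $s,s'$ (showing $C^\lambda_{s,t^\lambda}C^\lambda_{t^\lambda,s'}=C^\lambda_{s,s'}$), but this extra generality is not needed for the conclusion; your explicit justification that $C^\lambda_{t^\lambda,t^\lambda}\notin A_{<\lambda}$ via the cellular basis of Theorem~\ref{cb33} is a point the paper leaves implicit.
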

\begin{proof}
	Let $t^{\lambda}$ be the standard tableau of shape $\lambda$ such that $t^{\lambda}(i)<t^{\lambda}(j)$ for all $1\leq i<j\leq n$. Then we have
	\begin{equation*}
	   	\begin{aligned}
		C_{s,t^{\lambda}}^{\lambda}&=\psi_{d(s)}^* e_{\lambda}\\
		C_{t^{\lambda},s'}^{\lambda}&=e_{\lambda}\psi_{d(s')}.
	\end{aligned} 
	\end{equation*}

    Thus,
    \begin{equation*}
      \begin{aligned}
    	C_{s,t^{\lambda}}^{\lambda}C_{t^{\lambda},s'}^{\lambda}&=\psi_{d(s)}^* e_{\lambda} e_{\lambda}\psi_{d(s')}\\
    	&=\psi_{d(s)}^*  e_{\lambda}\psi_{d(s')}\\
    	&=C_{s,s'}^{\lambda}
    \end{aligned}      
    \end{equation*}

    which implies $\phi_{\lambda}(t^{\lambda},t^{\lambda})=1$.
\end{proof}

This lemma guarantees that $TL_{r,1,n}$ is a quasi-hereditary algebra.

For two residues, $a$ and $b$, in $\mathbb{Z}/e\mathbb{Z}$, We define the difference between them as
\begin{equation}\label{eq721}
    |a-b|:=min\{|c-d|\in\mathbb{N}|c,d\in\mathbb{Z}\textit{ and }c\equiv a, d\equiv b \textit{ mod } e\} .
\end{equation}
The next theorem gives an equivalent condition for the semi-simplicity of $TL_{r,1,n}^{\Lambda}$ for some specific dominant weight $\Lambda$.

\begin{thm} $\label{Th35}$
	Let $\Lambda=\Lambda_{j_1}+\Lambda_{j_2}+\dots+\Lambda_{j_r}$ be a dominant weight satisfying $(\ref{reslmd})$ and $TL_{r,1,n}^{\Lambda}$ be the generalised Temperley-Lieb algebra defined in Theorem \ref{dftl} over a field $R$ of characteristic $0$. Then $TL_{r,1,n}^{\Lambda}$ is semi-simple if and only if $|j_k-j_l|\geq n$ for all $1\leq k<l \leq r$, where $|j_k-j_l|$ is as defined in (\ref{eq721}).
	%where $|j_k-j_l|$ is the unique minimal positive value between $j_k-j_l$ and $j_l-j_k$ modulo $e$ if $e>0$. 
\end{thm}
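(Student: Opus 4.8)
The plan is to read semisimplicity off the graded cellular structure of Theorem \ref{cb33} by means of the Graham--Lehrer criterion, matching the resulting combinatorial condition with Ariki's semisimplicity theorem in one direction and with the known representation theory of the blob algebra in the other. The first step is a reduction to cell modules: since $TL_{r,1,n}$ is cellular and, by Lemma \ref{34}, $\phi_{\lambda}\neq 0$ for every $\lambda\in\mathfrak{B}_n^{(r)}-\mathfrak{D}_n^{(r)}$, it is quasi-hereditary, and combining the identity $\dim TL_{r,1,n}=\sum_{\lambda}(\dim W(\lambda))^2$ with Theorem \ref{33} shows that $TL_{r,1,n}$ is semisimple if and only if $rad(\phi_{\lambda})=0$ for every $\lambda\in\mathfrak{B}_n^{(r)}-\mathfrak{D}_n^{(r)}$. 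The one-component shapes $\lambda\in\mathfrak{B}_n^{(r)}(1)$ have $\dim W(\lambda)=1$ and, by the computation in Lemma \ref{34}, a non-degenerate form, so they never obstruct semisimplicity; hence everything is controlled by the two-component shapes $\lambda=\lambda_{k_1,k_2}^{[a]}$.

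For the ``if'' direction I would pass to the ambient cyclotomic Hecke algebra $H(r,1,n)$. Under the Brundan--Kleshchev identification one has, after relabelling, $v_i/v_j=q^{j_i-j_j}$, so the first condition in Ariki's criterion (Theorem \ref{ssth}), namely $q^d v_i\neq v_j$ for $i\neq j$ and $|d|<n$, is exactly the hypothesis $|j_k-j_l|\geq n$ for all $k<l$, in the sense of \eqref{eq721}. Moreover $r\geq 2$, so such a pair exists, and a finite $e$ with $|j_k-j_l|\geq n$ forces $e\geq 2n$, since the distance from an integer to a coset of $e\mathbb{Z}$ is at most $e/2$; hence $1+q+\dots+q^j\neq 0$ for $1\leq j\leq n-1$ is automatic. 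Thus $H(r,1,n)$ is semisimple by Theorem \ref{ssth}, and so is its quotient $TL_{r,1,n}$.

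For the ``only if'' direction I would argue by contraposition. Suppose $|j_{k_1}-j_{k_2}|<n$ for some $k_1<k_2$ (the common value is then at least $3$ by \eqref{reslmd}); I would exhibit a two-component shape $\lambda=\lambda_{k_1,k_2}^{[a]}$ with $rad(\phi_{\lambda})\neq 0$. The cell module $W(\lambda)$ has basis indexed by $Std(\lambda)$, and I would compute its Gram matrix for $\phi_{\lambda}$ directly in the KLR presentation, using Definition \ref{Df28}, the relation $\psi_r e(i)=e(s_r\circ i)\psi_r$, and Lemma \ref{lrh lm} (which rewrites $y_k e_{\lambda}$ as a sum of strictly lower cellular terms). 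I expect $\det\phi_{\lambda}$ to be, up to a unit of $R$, a product of factors each of which vanishes precisely when $q^c v_{k_1}=v_{k_2}$ for some integer $c$ in a range depending on $a$, so that as $a$ varies these factors exhaust the equalities $q^d v_{k_1}=v_{k_2}$ with $|d|<n$; since $|j_{k_1}-j_{k_2}|<n$ one such equality holds, making some $\phi_{\lambda}$ degenerate and the corresponding $W(\lambda)$ non-simple. Equivalently, one may first verify that the part of $TL_{r,1,n}$ built from the cellular basis elements attached to shapes using only the components $k_1$ and $k_2$ reassembles, through the KLR idempotents, into a sum of blocks of the blob algebra $TLB_n(q,Q)$ with charges $j_{k_1},j_{k_2}$, and then invoke the classical fact that $TLB_n$ with charge difference less than $n$ has a reducible cell module (Martin--Woodcock \cite{martin_woodcock_2003}, Graham--Lehrer \cite{GRAHAM2003479}, Plaza \cite{PLAZA2013182}).

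The hard part will be this ``only if'' step: either carrying out the KLR computation of $\det\phi_{\lambda}$ on the two-column cell modules accurately enough to pin down its zero locus in terms of $j_{k_1}-j_{k_2}$ and $n$, or verifying carefully that the ``$(k_1,k_2)$-part'' of $TL_{r,1,n}$ is genuinely a blob algebra with the stated parameters, matching cell modules, bilinear forms and gradings. By contrast, the reduction to cell-module radicals and the ``if'' direction are routine.
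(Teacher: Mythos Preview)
Your proposal is largely sound, but the two halves differ in how they compare to the paper.

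For the ``if'' direction you take a genuinely different and cleaner route than the paper. The paper works entirely inside $TL_{r,1,n}$: it shows by induction on $l(w)$, using relation \eqref{25eq}, that when $|j_k-j_l|\geq n$ all residues in $res^{\Lambda}(t)$ are distinct and non-adjacent, so $\psi_{u_k}^2 e(t_{w'}^{\lambda})=e(t_{w'}^{\lambda})$ at each step and the Gram matrix of $\phi_{\lambda}$ is literally the identity. Your argument instead lifts to $H(r,1,n)$, checks Ariki's criterion (Theorem \ref{ssth}) directly from the hypothesis, and uses that quotients of semisimple algebras are semisimple. This is shorter and conceptually transparent; indeed it is essentially how the paper retrospectively reinterprets the result in Corollary \ref{coss}. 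Your verification that $|j_k-j_l|\geq n$ forces $e\geq 2n$ (hence the $q$-integer condition) is correct.

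For the ``only if'' direction your two plans are both viable but heavier than necessary. Computing $\det\phi_{\lambda}$ in full, or establishing the block-by-block identification with $TLB_n^{\Lambda'}$ (which the paper does only later, in Lemmas \ref{tlquot}--\ref{lm52}), is overkill for non-semisimplicity: one only needs \emph{one} nonzero element of \emph{one} $rad(\phi_{\lambda})$. The paper does exactly this. Choosing $j_1,j_2$ with $b=j_2-j_1\in(0,n)$ minimal, it takes $\lambda=((1^{n-b}),(1^b),\emptyset,\ldots)$ and the tableau $t_0$ with $1,\ldots,b$ in the second column. Then $e(t_0)\neq e(s)$ for any other $s\in Std(\lambda)$ forces $\phi_{\lambda}(t_0,s)=0$, while $\phi_{\lambda}(t_0,t_0)$ reduces via $\psi_b^2 e(t_1)=(y_{b+1}-y_b)e(t_1)$ (the third case of \eqref{25eq}, since the residues at positions $b,b+1$ in $t_1$ are $0,1$) to an expression of the form $\psi_{d(t_1)}^*\psi_{d(t_1)}(y_c-y_d)e_{\lambda}$, which lies in $TL_{r,1,n}^{\triangleleft\lambda}$ by Lemma \ref{lrh lm}. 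Hence $C_{t_0}\in rad(\phi_{\lambda})$. This is a two-line KLR computation once the right $\lambda$ and $t_0$ are chosen; I would recommend replacing your determinant/blob strategy with this direct argument.
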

Before proving this theorem, we remark that the condition $|j_k-j_l|\geq n$ is not only a restriction on the dominant weight $\Lambda$ but also on the cardinality of the index set $I$, which is the smallest positive integer $e$ such that $1+q+q^2+\dots+q^{e-1}=0$. Since $I=\mathbb{Z}/e\mathbb{Z}$ if $e>0$ and all $j_k$'s are in $I$, this condition implies that $e\geq rn$ if $e\neq 0$.
\begin{proof}
	We first check the sufficiency. Suppose $\Lambda=\Lambda_{j_1}+\Lambda_{j_2}+\dots+\Lambda_{j_r}$ satisfying that  $|j_k-j_l|\geq n$ for all $1\leq k<l \leq r$. By Theorem $\ref{33}$, it is enough to show that $rad(\phi_{\lambda})=0$ for all $\lambda \in \mathfrak{B}_n^{(r)}-\mathfrak{D}_n^{(r)}$. Without losing generality, assume that the last $r-2$ components of $\lambda$ are empty.
	
	Let $t^{\lambda}$ be the tableau of shape $\lambda$ such that $t^{\lambda}(i)<t^{\lambda}(j)$ if $1\leq i<j\leq n$. For $w\in \mathfrak{S}_n$, let $t_w^{\lambda}=t^{\lambda}\circ w$. We claim that $\phi_{\lambda}(t_w^{\lambda},t_w^{\lambda})=1$, if $t_w^{\lambda}\in Std (\lambda)$. This will be proved by induction on $l(w)$.
	
	When $l(w)=0$, $t_w^{\lambda}=t^{\lambda}$ and we have  $\phi_{\lambda}(t^{\lambda},t^{\lambda})=1$ by the proof of Lemma $\ref{34}$. For a positive integer $k$, assume $\phi_{\lambda}(t_w^{\lambda},t_w^{\lambda})=1$ for all $w\in \mathfrak{S}_n$ such that $l(w)<k$ and $t_w^{\lambda}$ is a standard tableau. For $w\in \mathfrak{S}_n$ with length $k$, let $w=s_{u_1}s_{u_2}\dots s_{u_k}$ be the official reduced expression. Then $w'=s_{u_1}s_{u_2}\dots s_{u_{k-1}}$ is the official reduced expression of $w'$ which is of length $k-1$. By the inductive assumption, we have:
	\begin{equation}
		e_{\lambda}\psi_{w'}\psi_{w'}^*e_{\lambda}=\phi_{\lambda}(t_{w'}^{\lambda},t_{w'}^{\lambda})e_{\lambda}=e_{\lambda}.
	\end{equation}
    For $t_w^{\lambda}$, we have
    \begin{equation*}
    \begin{aligned}
    	\phi_{\lambda}(t_{w}^{\lambda},t_{w}^{\lambda})e_{\lambda}&=e_{\lambda}\psi_{w}\psi_{w}^*e_{\lambda}\\
    	&=e_{\lambda}\psi_{w'}\psi_{u_k}\psi_{u_k}\psi_{w'}^*e_{\lambda}\\
    	&=\psi_{w'}\psi_{u_k}^2e(t_{w'}^{\lambda})\psi_{w'}^*.	
    \end{aligned}        
    \end{equation*}

     We next show that $\psi_{u_k}^2e(t_{w'}^{\lambda})=e(t_{w'}^{\lambda})$. Let $e(t_{w'}^{\lambda})=e(i_1,i_2,\dots,i_n)$. By definition, $(i_1,i_2,\dots,i_n)$ is a sequence consisting of $i_1,i_1-1,\dots,i_1-a+1,i_2,i_2-1,\dots, i_2-n+a-1$ if there are $a$ nodes in $\lambda^{(1)}$ and $n-a$ nodes in $\lambda^{(2)}$. As $|i_1-i_2|\geq n$, they are all distinct numbers. So the position of $l$ in the tableau $t_{w'}^{\lambda}$ is uniquely determined by $i_l$. So for any value of $u_k$, $i_{u_k}\neq i_{u_k+1}$. As $t_{w'}^{\lambda}$ is a standard tableau, $u_k$ is not in the node below $u_{k+1}$, so $i_{u_k}\neq i_{u_k+1}-1$. As $t_{w}^{\lambda}=t_{w'}^{\lambda}s_{u_k}$ is standard, $u_k$ is not in the node above $u_k+1$, so $i_{u_k}\neq i_{u_k+1}+1$, either. By $(\ref{25eq})$, $\psi_{u_k}^2e(t_{w'}^{\lambda})=e(t_{w'}^{\lambda})$. We have
     \begin{equation*}
         \begin{aligned}
     	\phi_{\lambda}(t_{w}^{\lambda},t_{w}^{\lambda})e_{\lambda}&=\psi_{w'}\psi_{u_k}^2e(t_{w'}^{\lambda})\psi_{w'}^*\\
     	&=\psi_{w'}e(t_{w'}^{\lambda})\psi_{w'}^*\\
     	&=\phi_{\lambda}(t_{w'}^{\lambda},t_{w'}^{\lambda})e_{\lambda}\\
     	&=e_{\lambda}.
     \end{aligned}     
     \end{equation*}

     So $\phi_{\lambda}(t_{w}^{\lambda},t_{w}^{\lambda})=1$. 
     
     For two different standard tableaux, $t,s\in Std(\lambda)$, let $k$ be the smallest number such that $t(k)\neq s(k)$. As $|j_k-j_l|\geq n$, $Res^{\Lambda}(t(k))\neq Res^{\Lambda}(s(k))$. Thus $e(t)\neq e(s)$. So we have
     \begin{equation*}
     	e_{\lambda}\psi_{d(t)}\psi_{d(s)}^*e_{\lambda}=\psi_{d(t)}e(t)e(s) \psi_{d(s)}^*=0
     \end{equation*}
     which implies that $\phi_{\lambda}(t,s)=0$.
     
     Therefore, $\phi_{\lambda}(t,s)=\delta_{st}$ for all $s,t\in Std(\lambda)$. Thus, $rad(\phi_{\lambda})=0$ for all $\lambda \in \mathfrak{B}_n^{(r)}-\mathfrak{D}_n^{(r)}$ and $TL_{r,1,n}^{\Lambda}$ is semi-simple.
     
     We now turn to the necessity. Without losing generality, let $j_1$ and $j_2$ be the two closest elements in all the $j$'s and $0<j_2-j_1=b<n$. If $e>0$, let $b$ be such that $2b\leq e$. 
     
     Let $\lambda=((1^{n-b}),(1^b),\emptyset,\dots,\emptyset)$ and $t_0$ be the standard tableau of shape $\lambda$ such that $1,2,\dots,b$ are in the second component. We claim that $C_{t_0}\in rad(\phi_{\lambda})$, which implies that $TL_{r,1,n}^{\Lambda}$ is not semi-simple according to Theorem $\ref{33}$.
     
     We first prove $\phi_{\lambda}(t_0,s)=0$ if $s\neq t_0$. By definition,
     \begin{equation*}
     	\phi_{\lambda}(t_0,s)e_{\lambda}=e_{\lambda}\psi_{d(t_0)}^*\psi_{d(s)}e_{\lambda}=\psi_{d(t_0)}^*e(t_0) e(s)\psi_{d(s)}.
     \end{equation*}
     As $e(\mathbf{i})$'s are orthogonal idempotents, $e(t_0) e(s)\neq 0$ if and only if $e(t_0)=e(s)$. Without losing generality, let $j_1=0$ and $j_2=b$. Then $e(t_0)=e(b,b-1,b-2,\dots, b-n)$. We prove that for $s=t_0$ if $e(t_0)=e(s)$ and $s\in Std(\lambda)$. It is enough to show that $1,2,\dots,b$ are in the second component of $s$. This is implied by the fact $Res_s^{\Lambda}(i)=b+1-i$ for $1\leq i\leq b$ and $s$ is standard. So if $s\neq t_0$, $\phi_{\lambda}(t_0,s)=0$.
     
     We next prove $\phi_{\lambda}(t_0,t_0)=0$. Let $d(t_0)=s_{u_1}s_{u_2}\dots s_{u_k}$ be the official reduced expression and $t_1$ be the standard tableau such that $d(t_1)=s_{u_1}s_{u_2}\dots s_{u_{k-1}}$. As all of $1,2\dots b$ are in the second component of $t_0$ and both $t_0$ and $t_1$ are standard, $s_{u_k}=s_b$ and $t_1$ is the standard tableau with $1,2,\dots,b-1,b+1$ in the second component. We have
     \begin{equation*}
     	e(t_1)=e(b,b-1,\dots,2,0,1,-1,\dots,b-n).
     \end{equation*}
     And we have
     \begin{equation*}
         \begin{aligned}
     	&e_{\lambda}\psi_{d(t_0)}^*\psi_{d(t_0)}e_{\lambda}\\
     	&=e_{\lambda}\psi_{d(t_1)}^*\psi_{u_k}^2 \psi_{d(t_1)}e_{\lambda}\\
     	&=\psi_{d(t_1)}^*\psi_{b}^2 e(b,b-1,\dots,2,0,1,-1,\dots,b-n)\psi_{d(t_1)}\\
     	&=\psi_{d(t_1)}^*(y_{b+1}-y_b) e(b,b-1,\dots,2,0,1,-1,\dots,b-n)\psi_{d(t_1)}\\
     	&=\psi_{d(t_1)}^*\psi_{d(t_1)}(y_c-y_d)e_{\lambda}
     \end{aligned}     
     \end{equation*}

        for some $c$ and $d$ between $b+1$ and $1-b$. The second last equation comes from the third case in $(\ref{25eq})$. The last equation is from $(\ref{21eq}),(\ref{23eq})$ and $(\ref{24eq})$ where the case $i_r=i_{r+1}$ is excluded by the fact that $2b\leq e$ and only $s_l$ where $l\leq 2b-1$ appears in the official reduced expression of $d(t_1)$.
        
        And by Lemma \ref{lrh lm}, we have
        \begin{equation*}
        	y_r e_{\lambda}=y_s e_{\lambda}=0\text{    mod } TL_{r,1,n}^{\triangleleft\lambda}.
        \end{equation*}
     Thus,
     \begin{equation*}
     	e_{\lambda}\psi_{d(t_0)}^*\psi_{d(t_0)}e_{\lambda}=0\text{    mod } TL_{r,1,n}^{\triangleleft\lambda}.
     \end{equation*}
     Therefore, we have $\phi_{\lambda}(t_0,t_0)=0$ by definition. In conclusion, $C_{t_0}\in rad(\phi_{\lambda})$, which implies that $TL_{r,1,n}^{\Lambda}$ is not semi-simple.
\end{proof}

The following corollary is an immediate consequence of Theorem $\ref{Th35}$ and $(\ref{32eq})$:

\begin{cor} \label{coss}
	Let $TL_{r,1,n}$ be the generalised Temperley-Lieb quotient defined in Definition \ref{TL} with parameters $q,v_1,v_2,\dots,v_r$. Then $TL_{r,1,n}(q,v_1,v_2,\dots,v_r)$ is semisimple if and only if  
	\begin{equation*}
		\dfrac{v_i}{v_j}\neq q^l
	\end{equation*}
	for all $1\leq i\leq j\leq r$ and $-n+1\leq l\leq n-1$ and
	\begin{equation*}
	    1+q+q^2+\dots+q^i\neq 0
	\end{equation*}
	for $1\leq i\leq n-1$.
\end{cor}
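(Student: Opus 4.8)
The plan is to deduce the statement from Theorem \ref{Th35} by translating its hypothesis into the original parameters $q,v_1,\dots,v_r$ using the dictionary of Section \ref{2.1}. By (\ref{32eq}) and the Brundan--Kleshchev isomorphism (Theorem \ref{iso thm}), $TL_{r,1,n}(q,v_1,\dots,v_r)$ is identified with $TL_{r,1,n}^{\Lambda}$ for the dominant weight $\Lambda=\Lambda_{j_1}+\dots+\Lambda_{j_r}$ determined by $v_a=q^{j_a}$, where $e$ denotes the quantum characteristic of $q$ (the least $e>0$ with $1+q+\dots+q^{e-1}=0$, or $e=0$ if no such integer exists). When $e>0$ the element $q$ is a primitive $e$-th root of unity, so $q^a=q^b$ precisely when $a=b$ (if $e=0$) or $a\equiv b \pmod e$ (if $e>0$). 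We may assume $r\geq 2$, the case $r=1$ being the ordinary Temperley--Lieb algebra $TL_{n-1}(q)$.

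First I would reformulate the second displayed condition: since $1+q+\dots+q^i=(q^{i+1}-1)/(q-1)$, the requirement ``$1+q+\dots+q^i\neq 0$ for all $1\leq i\leq n-1$'' is equivalent to ``$e=0$ or $e\geq n+1$''. For the first condition, observe that $v_i/v_j=q^{j_i-j_j}$, so ``$v_i/v_j\neq q^l$ for all $-n+1\leq l\leq n-1$'' says exactly that $j_i-j_j$ is congruent modulo $e$ to no integer of absolute value less than $n$, that is, that the cyclic distance $|j_i-j_j|$ in the sense of (\ref{eq721}) is at least $n$.

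It then remains to verify that the conjunction of these two parameter conditions is equivalent to the hypothesis ``$|j_k-j_l|\geq n$ for all $1\leq k<l\leq r$'' of Theorem \ref{Th35}, after which Theorem \ref{Th35} yields the assertion. One direction is immediate from the two translations above. For the converse, I would note that for any pair $k<l$ the inequality $|j_k-j_l|\geq n$ forces $e=0$ or $e\geq 2n$, since the maximal cyclic distance in $\mathbb{Z}/e\mathbb{Z}$ is $\lfloor e/2\rfloor$; in particular this gives $e=0$ or $e\geq n+1$. Hence the first parameter condition already implies the second, and the pair of conditions is equivalent to the cyclic-distance hypothesis of Theorem \ref{Th35}. (Consistently, when $0<e\leq 2n-1$ the set $\{q^l : |l|\leq n-1\}$ runs over all $e$-th roots of unity, so the first condition fails there, matching the impossibility of two $e$-th roots of unity being cyclically $n$ apart.)

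I expect no genuine difficulty beyond Theorem \ref{Th35}: the only care needed is to separate the linear case $e=0$ from the cyclic case $e>0$ and to observe that the first parameter condition is satisfiable exactly when $e=0$ or $e\geq 2n$, on which range it coincides with the cyclic-distance hypothesis.
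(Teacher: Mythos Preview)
Your argument is correct and follows exactly the route the paper intends: the paper states only that the corollary ``is an immediate consequence of Theorem \ref{Th35} and (\ref{32eq})'', and you have spelled out precisely that deduction, translating $v_a=q^{j_a}$ via (\ref{32eq}) and matching the cyclic-distance condition of Theorem \ref{Th35} with the first parameter condition. Your additional observation that, for $r\geq 2$, the first displayed condition already forces $e=0$ or $e\geq 2n$ (hence implies the second displayed condition) is correct and makes explicit something the paper leaves implicit; it is consistent with the remark after Theorem \ref{Th35} that the hypothesis there forces $e\geq rn$ when $e\neq 0$.
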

Comparing with Theorem \ref{ssth}, the generalised Temperley-Lieb algebra $TL_{r,1,n}$ is semisimple if and only if the
same is true of the corresponding Hecke algebra $H(r,1,n)$.

\section{Irreducible representations and decomposition numbers}
In this section, we study the irreducible representations and the decomposition numbers for cell modules of $TL_{r,1,n}$. Let $\lambda$ be a one-column multipartition of $n$ consisting of at most two non-empty components. Let $W(\lambda)$ be the cell module of $TL_{r,1,n}$ corresponding to $\lambda$ and $L(\lambda)=W(\lambda)/rad(\phi_{\lambda})$ be the corresponding simple module. We first calculate the dimension of $L(\lambda)$. By Theorem $\ref{33}$, it is enough to find the rank of the bilinear form $\phi_{\lambda}$.

%Let $\lambda$ and $\mu$ be two one-column multipartitions of $n$ consisting of at most two non-empty components. We next study the dimension of the simple module $L(\lambda)$ of $TL_{r,1,n}$ and the decomposition number $d_{\lambda\mu}:=[W(\lambda),L(\mu)]$ of a cell module $W(\lambda)$.  By Theorem $\ref{33}$, it is enough to find $rad(\phi_{\lambda})$ for all $\lambda \in \mathfrak{B}_n^{(r)}-\mathfrak{D}_n^{(r)}$. Let $\lambda_{u,v}$ be the multipartition such that all the components are empty except for the $v^{th}$ and $u^{th}$ one. In this section, we give a method to find the value of the bilinear form $\phi_{\lambda}(s,t)$ for all $s,t\in Std(\lambda)$ and show that this method does not depend on the terms in $\Lambda$ except $\Lambda_{i_u}$ and $\Lambda_{i_v}$ or the length of $\Lambda$. Therefore, the bilinear form $\phi_{\lambda}$ of $TL_{r,1,n}$ equals to a corresponding one $\phi_{\lambda'}$ of $TL_{2,1,n}$ which is the Temperley-Lieb algebra of type $B_n$.

\subsection{Garnir tableaux}
When calculating the value of the bilinear form $\phi_{\lambda}(s,t)$ for $s,t\in Std(\lambda)$, we can meet some non-standard tableaux. Inspired by Lobos and Ryom-Hansen, we use Garnir tableaux as a tool to deal with these non-standard tableaux. This method is originally due to Murphy in \cite{murphy1995representations}. We start with the definition of a Garnir tableau.
 
 \begin{defn}
 	Let $\lambda \in \mathfrak{B}_n^{(r)}-\mathfrak{D}_n^{(r)}$ be a one-column multipartition of $n$ and $g$ be a $\lambda$-tableau. We call $g$ a Garnir tableau if there exists $k$ with $1\leq k\leq n-1$ such that
 	
 	(a) $g$ is not standard but $g\circ s_k$ is;
 	
 	(b) if $g\circ s_i \triangleleft g$, then $i=k$, where $\triangleleft$ is the partial order defined by (\ref{potab}).
 \end{defn}

 Equivalently, $g$ is a Garnir tableau if and only if there is a unique $k,\;\;1\leq k\leq n-1$ such that $g(k)>g(k+1)$ with respect to the order on nodes and this number $k$ is in the same column as $k+1$. Here are some Garnir tableaux of shape $\lambda=((1^2),(1^4),\emptyset)$:
 
 \begin{figure}[h]
 	\begin{center}
 		\begin{tikzpicture}[scale=0.6]
 			%% draw straight lines where required
 			\foreach \x in {1,2,8,9,15,16}
 			\draw (\x,2)--(\x,4);
 			\foreach \x in {3,4,10,11,17,18}
 			\draw (\x,0)--(\x,4);
 			\foreach \x in {2,3,4}
 			\draw (1,\x)--(2,\x);
 			\foreach \x in {0,1,2,3,4}
 			\draw (3,\x)--(4,\x);
 			\foreach \x in {2,3,4}
\draw (8,\x)--(9,\x);
\foreach \x in {0,1,2,3,4}
\draw (10,\x)--(11,\x);
 			\foreach \x in {2,3,4}
\draw (15,\x)--(16,\x);
\foreach \x in {0,1,2,3,4}
\draw (17,\x)--(18,\x);
 			
 			%% fill in the boxes
 	    	\draw node at (1.5,3.5){2};\draw node at (1.5,2.5){1};\draw node at (3.5,3.5){3};\draw node at (3.5,2.5){4};\draw node at (3.5,1.5){5};\draw node at (3.5,0.5){6};
\draw node at (8.5,3.5){3};\draw node at (8.5,2.5){2};\draw node at (10.5,3.5){1};\draw node at (10.5,2.5){4};\draw node at (10.5,1.5){5};\draw node at (10.5,0.5){6};
\draw node at (15.5,3.5){1};\draw node at (15.5,2.5){4};\draw node at (17.5,3.5){3};\draw node at (17.5,2.5){2};\draw node at (17.5,1.5){5};\draw node at (17.5,0.5){6};
 			\draw node at (5.5,3.5){$\emptyset$};	\draw node at (12.5,3.5){$\emptyset$};	\draw node at (19.5,3.5){$\emptyset$};
 			\draw node at (6.5,2.5){;};	\draw node at (13.5,2.5){;};	\draw node at (20.5,2.5){.};
 			
 		\end{tikzpicture}
 	
 	\end{center}
 	\caption{Garnir tableaux of $\lambda$ } 
 
 \end{figure}
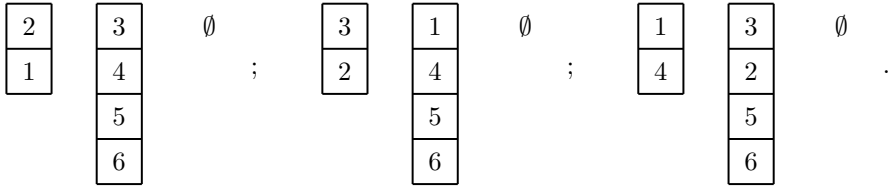
 	Comparing the three tableaux, we find that a Garnir tableau is neither uniquely determined by the positions nor by the numbers that cause the ``non-standardness". The next lemma shows that any non-standard tableaux can be transformed into a Garnir one.
 
 \begin{lem} \cite[Corollary 22]{LOBOSMATURANA2020106277}$\label{41}$
 	Let $t$ be a non-standard tableau of shape $\lambda$. Then there exists a Garnir tableau $g$ and $w\in \mathfrak{S}_n$ such that $t=g\circ w$ and $l(d(t))=l(d(g))+l(w)$.
 \end{lem}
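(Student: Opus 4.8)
\subsection*{Proof proposal}

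The plan is to argue by strong induction on $\ell(d(t))$, the Coxeter length of the permutation $d(t)$ attached to $t$. Throughout, write $D(t):=\{\,i: 1\le i\le n-1,\ t(i)>t(i+1)\,\}$ for the set of descents of $t$ with respect to the total order on nodes. Unwinding the definition (\ref{potab}) of $\unlhd$ on tableaux one checks that $i\in D(t)$ if and only if $t\circ s_i\triangleleft t$, and hence, since $d(t\circ s_i)=d(t)s_i$ and by the correspondence between the Bruhat order on $\mathfrak{S}_n$ and $\unlhd$ (\cite{LOBOSMATURANA2020106277}, Theorem~4), if and only if $d(t)s_i<d(t)$, in which case $\ell(d(t\circ s_i))=\ell(d(t))-1$. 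Note also that $D(t)=\emptyset$ forces $t(1)<t(2)<\dots<t(n)$, that is $t=t^{\lambda}$, which is standard; so a non-standard $t$ necessarily has $D(t)\neq\emptyset$.

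The inductive step. If $t$ is itself a Garnir tableau we are done, taking $g=t$ and $w=1$. Otherwise $t$ is non-standard and not Garnir, and the heart of the matter --- call it the Claim --- is that in this case there exists $i\in D(t)$ such that $t\circ s_i$ is still \emph{non-standard}. Granting the Claim, set $t':=t\circ s_i$; then $t'$ is non-standard with $\ell(d(t'))=\ell(d(t))-1$, so by induction $t'=g\circ w'$ for some Garnir tableau $g$ and some $w'\in\mathfrak{S}_n$ with $\ell(d(t'))=\ell(d(g))+\ell(w')$. Put $w:=w's_i$, so that $t=t'\circ s_i=g\circ w$ and $d(t)=d(g)w$. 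Then
\[
\ell(d(t))=\ell(d(t'))+1=\ell(d(g))+\ell(w')+1\ \ge\ \ell(d(g))+\ell(w)\ \ge\ \ell(d(g)w)=\ell(d(t)),
\]
using only $\ell(w's_i)\le\ell(w')+1$ and submultiplicativity of $\ell$. Hence every inequality is an equality and $\ell(d(t))=\ell(d(g))+\ell(w)$, as required.

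It remains to prove the Claim, and I expect this purely combinatorial step to be the main obstacle. It is delicate because, as the standard tableau in Figure~\ref{oct} shows, $D(t)\neq\emptyset$ does \emph{not} imply $t$ is non-standard: "being non-standard'' is genuinely stronger than "having a descent''. The approach I would take is the following. Since $t$ is non-standard there is a component $c$ of $\lambda$ containing two entries $x<y$ with $y$ strictly above $x$ --- a column inversion. If some $i\in D(t)$ has neither $i$ nor $i+1$ in component $c$, then component $c$ of $t\circ s_i$ coincides with that of $t$, so the inversion $(x,y)$ survives, $t\circ s_i$ is non-standard, and we are done. So we may assume every descent of $t$ involves an entry of component $c$. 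One then shows that the transposition $s_i$ can destroy a column inversion only when $\{i,i+1\}$ is exactly the pair of entries of an inversion and these entries are adjacent in their column; if this were forced for every $i\in D(t)$ --- e.g. by looking at the minimal misplaced entry of $t$ and at the at most two components $c,\mathrm{comp}(i),\mathrm{comp}(i+1)$ affected by a given $s_i$ --- one deduces that $t$ has a single column inversion, made of consecutive values $k,k+1$ adjacent in a column, and that $D(t)=\{k\}$; by the characterisation of Garnir tableaux this says $t$ is a Garnir tableau, contradicting the case assumption. Making precise this bookkeeping of "which inversions a given $s_i$ can touch'', together with the interaction between the components $\mathrm{comp}(i)$ and $\mathrm{comp}(i+1)$, is where the real work lies.
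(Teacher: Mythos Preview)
The paper does not actually prove this lemma; it is quoted verbatim from \cite[Corollary~22]{LOBOSMATURANA2020106277} with no argument given. So there is no ``paper's proof'' to compare against, and your inductive scheme is a perfectly reasonable way to supply one.

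Your reduction is correct, and the Claim you isolate is indeed the only nontrivial point --- but it is considerably easier than your sketch suggests, and you should not leave it as ``where the real work lies''. The clean observation is this: applying $s_i$ to a tableau swaps the positions of the entries $i$ and $i+1$ and leaves all other entries fixed; hence for any pair of cells $(\gamma_1,\gamma_2)$ in the same column, the relative order of their entries changes under $s_i$ \emph{only} when those entries are exactly $\{i,i+1\}$. Consequently, if $t\circ s_i$ is standard while $t$ is not, then $t$ has exactly one column inversion and it is the pair of cells occupied by $i$ and $i+1$. Now suppose every $i\in D(t)$ made $t\circ s_i$ standard: for two distinct descents $i\neq j$ this would force $\{t(i),t(i+1)\}=\{t(j),t(j+1)\}$, hence $\{i,i+1\}=\{j,j+1\}$, a contradiction. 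So $|D(t)|=1$, say $D(t)=\{k\}$; condition~(b) of the Garnir definition then holds, and since $t$ is assumed not Garnir, condition~(a) must fail, i.e.\ $t\circ s_k$ is not standard. Either way the Claim follows in a few lines, with no need to track components or ``minimal misplaced entries''.

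With the Claim in hand your length bookkeeping is fine: the chain
\[
\ell(d(t))=\ell(d(g))+\ell(w')+1\ \ge\ \ell(d(g))+\ell(w)\ \ge\ \ell(d(g)w)=\ell(d(t))
\]
forces all inequalities to be equalities, exactly as you wrote.
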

Fix a Garnir tableau $g$ and a dominant weight $\Lambda$, let $e(g):= e(res^{\Lambda}(g))$ be the idempotent of $TL_{r,1,n}^{\Lambda}$ where $res^{\Lambda}(g)$ is as defined in $(\ref{eqres})$. The next lemma shows that $e(g)$ is a composition of lower terms.
 \begin{lem} \cite[Lemma 35]{LOBOSMATURANA2020106277}$\label{42}$
 	If $g$ is a Garnir tableau of shape $\lambda$, let $e(g)=e(Res^{\Lambda}(g))$ then
 	\begin{equation}
 		e(g)=\sum_{\mu \triangleleft \lambda} D_{\mu}
 	\end{equation}
 where $D_{\mu}$ is in the two-sided ideal of $H(r,1,n)$ generated by $e_{\mu}$, which is the idempotent defined before Definition $\ref{Df28}$.
 \end{lem}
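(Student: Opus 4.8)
The plan is to derive the statement from the graded cellular structure of the generalised blob algebra $B_{r,n}$ (Theorem \ref{29}); since $TL_{r,1,n}$ is a quotient of $B_{r,n}$ (Corollary \ref{blockdecom}) and each generator $e_{\mu}$ equals the cellular basis element $C_{t^{\mu},t^{\mu}}^{\mu}$, the cellular axioms give $\langle e_{\mu}\rangle\subseteq$ the $R$-span of the $C_{s,t}^{\nu}$ with $\nu\unlhd\mu$, while $C_{s,t}^{\nu}=\psi_{d(s)}^{*}e_{\nu}\psi_{d(t)}\in\langle e_{\nu}\rangle$; hence $\sum_{\mu\triangleleft\lambda}\langle e_{\mu}\rangle$ is exactly the $R$-span of the $C_{s,t}^{\mu}$ with $\mu\triangleleft\lambda$. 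So it is enough to show that $e(g)$ lies in this span, the cell ideal strictly below $\lambda$.

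First I would rewrite $e(g)$ in the cellular basis. Put $\mathbf{i}=res^{\Lambda}(g)$, so $e(g)=e(\mathbf{i})$. Pushing weight idempotents past the $\psi$'s using $(\ref{sym})$ and using $(\ref{18eq})$, one checks that $e(\mathbf{i})\psi_{d(u)}^{*}e_{\mu}$ is nonzero only when $\mathbf{i}=res^{\Lambda}(u)$ (and then equals $\psi_{d(u)}^{*}e_{\mu}$), and symmetrically $e_{\mu}\psi_{d(v)}e(\mathbf{i})$ is nonzero only when $\mathbf{i}=res^{\Lambda}(v)$; therefore $e(\mathbf{i})C_{u,v}^{\mu}e(\mathbf{i})$ equals $C_{u,v}^{\mu}$ if $res^{\Lambda}(u)=res^{\Lambda}(v)=\mathbf{i}$ and vanishes otherwise. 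Applying $e(g)=e(\mathbf{i})e(g)e(\mathbf{i})$ to the expansion of $e(g)$ in the cellular basis of $B_{r,n}$, only the $C_{u,v}^{\mu}$ with $res^{\Lambda}(u)=res^{\Lambda}(v)=res^{\Lambda}(g)$ survive. Since $C_{u,v}^{\mu}\in\langle e_{\mu}\rangle$, the lemma is reduced to the following combinatorial claim $(\ast)$: \emph{if $\mu\in\mathfrak{B}_{n}^{(r)}$ admits a standard tableau $s$ with $res^{\Lambda}(s)=res^{\Lambda}(g)$, then $\mu\triangleleft\lambda$.}

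To prove $(\ast)$ I would exploit the structure of $g$. Let $k$ be the unique descent of $g$: its entries $k$ and $k+1$ sit in one component $l$, in consecutive rows $a+1$ and $a$, so $res^{\Lambda}(g)$ has the pair $(i_{l}-a,\,i_{l}-a+1)$ in positions $k,k+1$ — an ``ascent by one''. Building a standard tableau box by box, residues strictly decrease down any column, so such an ascent cannot be produced within a single component; hence in any standard $s$ with $res^{\Lambda}(s)=res^{\Lambda}(g)$, the entries $k$ and $k+1$ lie in distinct components $l_{k}\neq l_{k+1}$, of heights $h_{k},h_{k+1}$ just before they are added, with $i_{l_{k+1}}-i_{l_{k}}=1+h_{k+1}-h_{k}$ — a relation compatible with $(\ref{reslmd})$ precisely because $|i_{a}-i_{b}|>2$. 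Now set $t=g\circ s_{k}\in Std(\lambda)$ and $s'=s\circ s_{k}$. Using that $k,k+1$ occupy different components of $s$, one checks that $s'$ is again standard of the same shape $\mu$ and that $res^{\Lambda}(s')=s_{k}\circ res^{\Lambda}(g)=res^{\Lambda}(t)$. Thus $s'$ and $t$ are standard tableaux sharing a residue sequence, and a comparison of their chains of sub-shapes $shape(s'|_{m}),shape(t|_{m})$ via $(\ref{potab})$ together with \cite[Theorem 4]{LOBOSMATURANA2020106277} should give $\mu=shape(s')\unlhd shape(t)=\lambda$, with equality impossible since $t$ keeps $k,k+1$ in a single component while $s'$ does not. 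This yields $\mu\triangleleft\lambda$, hence the lemma.

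The step I expect to be the real obstacle is this last dominance comparison: one must argue that among all standard tableaux carrying the residue sequence $res^{\Lambda}(t)$, the Garnir-derived tableau $t$ has the dominance-maximal shape, and it is exactly here that the separation hypothesis $(\ref{reslmd})$ on $\Lambda$ is used. An alternative that circumvents the combinatorics is to argue entirely inside the cyclotomic KLR algebra: since $res^{\Lambda}(g)$ has $i_{k}\rightarrow i_{k+1}$, relation $(\ref{25eq})$ gives $\psi_{k}^{2}e(g)=(y_{k+1}-y_{k})e(g)$, and one tries to express $e(g)$ through such quadratic relations and then absorb the resulting $y$-terms into $\sum_{\mu\triangleleft\lambda}\langle e_{\mu}\rangle$ by Lemma \ref{lrh lm}.
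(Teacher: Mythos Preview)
The paper does not supply its own proof of this lemma; it is quoted verbatim from \cite[Lemma~35]{LOBOSMATURANA2020106277}. Your reduction to the combinatorial claim $(\ast)$ is correct and pleasant: expanding $e(g)$ in the cellular basis of $B_{r,n}$ and conjugating by $e(\mathbf{i})$ does kill every $C_{u,v}^{\mu}$ except those with $res^{\Lambda}(u)=res^{\Lambda}(v)=res^{\Lambda}(g)$, and since each surviving $C_{u,v}^{\mu}$ lies in $\langle e_{\mu}\rangle$ it really is enough to show that any $\mu\in\mathfrak{B}_{n}^{(r)}$ admitting such a standard tableau satisfies $\mu\triangleleft\lambda$. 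One minor point: the lemma as stated concerns ideals of $H(r,1,n)$, whereas your argument takes place in its quotient $B_{r,n}$; this is harmless for every application in the paper, but it is a discrepancy with the statement.

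The genuine gap is exactly where you locate it. The passage from ``$s'\in Std(\mu)$ and $t\in Std(\lambda)$ share a residue sequence'' to ``$\mu\unlhd\lambda$'' is not justified: two standard tableaux with the same residue sequence need not have comparable shapes, and the result you invoke, \cite[Theorem~4]{LOBOSMATURANA2020106277}, compares tableaux of a \emph{fixed} shape via the Bruhat order on their $d(t)$'s --- it says nothing about ordering two different shapes. Your argument does not yet exclude $\mu=\lambda$ either: you show that $k$ and $k+1$ lie in different components of $s$, but $\lambda$ may have several components, so this alone does not force $shape(s)\neq\lambda$. The alternative you sketch, via $\psi_{k}^{2}e(g)=(y_{k+1}-y_{k})e(g)$, is closer in spirit to the argument in \cite{LOBOSMATURANA2020106277}, but as written it expresses $\psi_{k}^{2}e(g)$ rather than $e(g)$ itself; turning this into an expression for $e(g)$ in terms of lower $e_{\mu}$'s (one has to relate $e(g)$ back to $e_{\lambda}$ through a chain of $\psi$'s and then absorb the $y$'s via Lemma~\ref{lrh lm}) is precisely the missing content, and is what the cited lemma supplies.
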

 These two lemmas provide us a method to transform $e(t)$ into a combination of lower terms when $t$ is not a standard tableau.
 
\subsection{The bilinear form $\phi$ and the irreducible representations of $TL_{r,1,n}$} \label{6.2}
 
This subsection concerns the dimensions of the simple modules $L(\lambda)$ of $TL_{r,1,n}$. According to Theorem \ref{33}, $dim(L(\lambda))$ equals to the rank of the bilinear form $\phi_{\lambda}(,)$.
 We next introduce a theoretical method to calculate the value of the bilinear form $\phi_{\lambda}(s,t)$ for $s,t\in Std(\lambda)$. In fact, we will not calculate any values of  $\phi_{\lambda}(s,t)$. This method is used to show the value of the bilinear form $\phi_{\lambda}(s,t)$ equals to the one on the corresponding cell module of $TLB(q,Q)$.
 
 We first introduce a notation which will be used when calculating the values of $\phi_{\lambda}(s,t)$. Denote $N=\{1,2,\dots,n-1\}$. For $U^{(k)}=(U^{(k)}_1,U^{(k)}_2,\dots,U^{(k)}_k)\in N^k$ where $k$ is a non-negative integer, let $\psi_{U^{(k)}}=\psi_{U^{(k)}_1}\psi_{U^{(k)}_2}\dots \psi_{U^{(k)}_k}$ with $\psi_i(i\in N)$ being the KLR generators in Definition $\ref{klrdf}$. We use the following equivalence relation to describe the difference between two elements in the cyclotomic KLR algebra.

  \begin{defn} \label{dfequiv}
  	For any positive integer $k$, define an equivalence relation $\stackrel{k}{\sim}$ on $N^k$ as follows: 
  We write $V^{(k)}\stackrel{k}{\sim}U^{(k)}$ if 
  	\begin{equation*}
  		\psi_{V^{(k)}}^*e_{\lambda}-\psi_{U^{(k)}}^*e_{\lambda}=\sum_{l<k} c(W^{(l)})\psi_{W^{(l)}}^*e_{\lambda}
  	\end{equation*}
     where $c(W^{(l)})\in R$ and $W^{(l)}$ runs over $N^l$ for all $0\leq l<k$.
  \end{defn}
  The following lemma shows that two reduced expressions of the same element lead to equivalent sequences.
\begin{lem} $\label{45}$
	For $w\in \mathfrak{S}_n$, let $s_{V^{(k)}}$ and $s_{U^{(k)}}$ be two reduced expressions of $w$ where $k=l(w)$, then $V^{(k)}\stackrel{k}{\sim}U^{(k)}$.
\end{lem}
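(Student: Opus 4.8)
The plan is to reduce the statement to the braid relations for the symmetric group, which generate all relations between reduced expressions, and then to track the error terms that appear when one applies the KLR relations. Concretely, by Matsumoto's theorem any two reduced expressions $s_{V^{(k)}}$ and $s_{U^{(k)}}$ of the same $w\in\mathfrak S_n$ are connected by a finite chain of moves, each of which either swaps two commuting generators $s_is_j=s_js_i$ with $|i-j|\geq 2$, or replaces $s_is_{i+1}s_i$ by $s_{i+1}s_is_{i+1}$. Since $\stackrel{k}{\sim}$ is visibly an equivalence relation, it suffices to prove $V^{(k)}\stackrel{k}{\sim}U^{(k)}$ when $U^{(k)}$ is obtained from $V^{(k)}$ by a single such move applied to a consecutive sub-block of the word; the rest of the word, a common prefix $s_{P}$ and suffix $s_{S}$, is untouched.

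First I would treat the commuting move. Here $\psi_{V^{(k)}}^*$ and $\psi_{U^{(k)}}^*$ differ only by interchanging $\psi_i\psi_j$ with $\psi_j\psi_i$ inside the word, and by relation (\ref{22eq}) these are literally equal in $\mathcal R_n^\Lambda$ (hence in $TL_{r,1,n}^\Lambda$), so $\psi_{V^{(k)}}^*e_\lambda=\psi_{U^{(k)}}^*e_\lambda$ and there is nothing to estimate: $V^{(k)}\stackrel{k}{\sim}U^{(k)}$ trivially, with no lower terms at all. Next comes the braid move. Write the word as $s_P\,(s_is_{i+1}s_i)\,s_S$ versus $s_P\,(s_{i+1}s_is_{i+1})\,s_S$, so that after applying $*$ and multiplying by $e_\lambda$ we must compare $\psi_{S}^*\psi_i\psi_{i+1}\psi_i\,\psi_{P}^*e_\lambda$ with $\psi_{S}^*\psi_{i+1}\psi_i\psi_{i+1}\,\psi_{P}^*e_\lambda$. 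Inserting $1=\sum_{\mathbf j}e(\mathbf j)$ on the right and pushing the idempotents leftward through the $\psi$'s via (\ref{sym}), I reduce to comparing $\psi_i\psi_{i+1}\psi_i\,e(\mathbf j)$ with $\psi_{i+1}\psi_i\psi_{i+1}\,e(\mathbf j)$ for each residue sequence $\mathbf j$. By relation (\ref{26eq}) this difference is $0$ unless $j_{i+2}=j_i$ is adjacent (or doubly adjacent) to $j_{i+1}$ in the quiver, and in those cases it equals $\pm e(\mathbf j)$ or $(y_i-2y_{i+1}+y_{i+2})e(\mathbf j)$.

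The heart of the argument, and the step I expect to be the main obstacle, is showing that these residual terms — which live in words of length $k-2$ (the $\pm e(\mathbf j)$ case) or involve a stray $y$ — can be rewritten as $\sum_{l<k}c(W^{(l)})\psi_{W^{(l)}}^*e_\lambda$. The $\pm e(\mathbf j)$ contribution is immediate once we restore the prefix/suffix: it becomes $\pm\psi_S^*\psi_P^*e_\lambda=\pm\psi_{W}^*e_\lambda$ for a word $W$ of length $k-2<k$. For the $(y_i-2y_{i+1}+y_{i+2})$ contribution one must commute the $y$'s past $\psi_P^*$ to land against $e_\lambda$, using (\ref{21eq}), (\ref{23eq}) and (\ref{24eq}); each such commutation either moves the $y$ freely or spawns an extra $+e(\mathbf j)$-type term of strictly smaller length, and eventually one is left with expressions of the form $\psi_{W}^* y_c e_\lambda$. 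At this point Lemma \ref{lrh lm} (Lobos–Ryom-Hansen) applies: $y_c e_\lambda$ lies in the span of the lower-order ideal, hence is a combination of $C$-basis elements $C_{u,v}^\mu$ with $\mu\triangleleft\lambda$, and by the cellular basis theorem (Theorem \ref{cb33}, or Theorem \ref{29} for $B_{r,n}$) every such $C_{u,v}^\mu=\psi_{d(u)}^*e_\mu\psi_{d(v)}$ can be re-expanded in the spanning set $\{\psi_{W^{(l)}}^*e_\lambda\}$ with $l<k$. I would need to check carefully that this re-expansion respects the length bound $l<k$ — this is where one uses that the lower-order terms produced by both Lemma \ref{lrh lm} and the Garnir-reduction Lemmas \ref{41}, \ref{42} carry strictly shorter underlying permutations, by the length-additivity $l(d(t))=l(d(g))+l(w)$ recorded there. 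Assembling the two cases and iterating over the chain of Matsumoto moves then yields $V^{(k)}\stackrel{k}{\sim}U^{(k)}$.
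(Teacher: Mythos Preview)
Your core approach---Matsumoto's theorem plus the KLR relations (\ref{22eq}) and (\ref{26eq}), with the error terms from (\ref{26eq}) giving strictly shorter $\psi$-words---is exactly the paper's argument. The paper's proof is two sentences long and stops there.

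Where you diverge is the third paragraph. Under the paper's standing hypothesis (\ref{pmres1}) we have $1+q\neq 0$, hence $e\neq 2$, so the $\leftrightarrow$ branch of relation (\ref{26eq}) never fires. The only error terms produced by a braid move are therefore $\pm e(\mathbf j)$, which after restoring the prefix and suffix give $\pm\psi_{W}^*e_\lambda$ with $|W|=k-3<k$ (not $k-2$, incidentally). No $y$'s appear and there is nothing further to do.

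It is worth noting that your proposed handling of the hypothetical $y$-case would not work as written. The definition of $\stackrel{k}{\sim}$ requires the difference to be an $R$-linear combination of elements $\psi_{W^{(l)}}^*e_\lambda$ with $l<k$, with $e_\lambda$ fixed. Lemma \ref{lrh lm} expresses $y_c e_\lambda$ in the two-sided ideals generated by $e_\mu$ for $\mu\triangleleft\lambda$; re-expanding in the cellular basis gives $\sum c_{u,v}\,\psi_{d(u)}^*e_\mu\psi_{d(v)}$. Even after multiplying on the right by $e_\lambda$ to force the correct idempotent, the resulting words have length $l(d(u))+l(d(v))$, and once you prepend $\psi_S^*\psi_P^*$ you obtain words of length $(k-3)+l(d(u))+l(d(v))$, which need not be $<k$. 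So the length bound you need is not supplied by Lemmas \ref{lrh lm}, \ref{41}, \ref{42} in the way you suggest. Fortunately this is moot here; just invoke $e\neq 2$ at the outset and drop the last paragraph entirely.
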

\begin{proof}
	As both $s_{V^{(k)}}$ and $s_{U^{(k)}}$ are reduced expressions of the same element $w$, they can be transformed to each other by braid relations. These relations correspond to $(\ref{22eq})$ and $(\ref{26eq})$ in $TL_{r,1,n}^{\Lambda}$. The error terms only occur in $(\ref{26eq})$. They lead to strictly shorter sequences. By Definition \ref{dfequiv}, we have $V^{(k)}\stackrel{k}{\sim}U^{(k)}$.
\end{proof}
The following lemma provides us the main tool to eliminate the lower terms when calculating the value of the bilinear form.
\begin{lem} $\label{algo}$
	For any non-negative integer $k$, let $U^{(k)}$ be a sequence of length $k$ and $\lambda\in \mathfrak{B}_n^{(r)}-\mathfrak{D}_n^{(r)}$. Then $\psi_{U^{(k)}}^* e_{\lambda}$ can be transformed into the following form using the generating relations $(\ref{18eq})-(\ref{26eq})$:
	\begin{equation} \label{89}
		\psi_{U^{(k)}}^*e_{\lambda}=\sum_{s\in Std(\lambda)}c_{U^{(k)}}(s)\psi_{d(s)}^* e_{\lambda}+\sum_{i=1}^{n} Y_i+\sum_{g\in Gar(\lambda)} D_g
	\end{equation}
	where $Y_i$ is in the two-sided ideal generated by $y_ie_{\lambda}$ and $D_g$ is in the one generated by $e(g)$ with $g$ running over the Garnir tableaux of shape $\lambda$.
	
\end{lem}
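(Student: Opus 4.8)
The plan is to prove the lemma by induction on the length $k$ of the sequence $U^{(k)}$, pushing everything through the KLR relations $(\ref{18eq})$–$(\ref{26eq})$ and invoking Lemmas $\ref{45}$ and $\ref{41}$. The organising device is that the idempotent $e_{\lambda}=e(res^{\Lambda}(t^{\lambda}))$ slides past any product of $\psi$'s by the exact relation $(\ref{sym})$ (only the residue label of the idempotent changes), so at any stage one may move $e_{\lambda}$ next to whichever subword one wishes to rewrite and move it back afterwards. The base case $k=0$ is immediate: $\psi_{U^{(0)}}^{*}e_{\lambda}=e_{\lambda}=\psi_{d(t^{\lambda})}^{*}e_{\lambda}$.

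For the inductive step write $w=s_{U_1}s_{U_2}\cdots s_{U_k}$. Suppose first that $s_{U^{(k)}}$ is \emph{not} reduced. By the standard combinatorics of reduced words in a Coxeter group, braid moves carry $s_{U^{(k)}}$ to a word with two equal adjacent letters; each braid move is realised in $TL_{r,1,n}^{\Lambda}$ by $(\ref{22eq})$ (exact) or by $(\ref{26eq})$, whose main term keeps the $\psi$-length $k$ and whose error term — reduced to $0$ or $\pm e(\mathbf{i})$ because $(\ref{pmres1})$ forces $e\neq 2$ — has $\psi$-length $k-3$ and, after the idempotent is slid back, is of the form $\psi_{W}^{*}e_{\lambda}$ with $|W|<k$, so it is absorbed by the inductive hypothesis. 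Once a factor $\psi_{r}\psi_{r}$ appears, I slide $e_{\lambda}$ to its right and apply $(\ref{25eq})$: the term vanishes, or collapses to some $\psi_{W}^{*}e_{\lambda}$ with $|W|=k-2$ (inductive hypothesis), or acquires a factor $\pm(y_{r+1}-y_{r})$; in the last case I transport the $y$'s rightward past the remaining $\psi$'s using $(\ref{18eq})$, $(\ref{21eq})$, $(\ref{23eq})$ and $(\ref{24eq})$. The error terms of that transport have $\psi$-length $<k$ (inductive hypothesis), whereas the surviving main term is $\psi_{W}^{*}(y_{c}-y_{d})e_{\lambda}$ with $|W|=k-2$, a difference of left multiples of $y_{c}e_{\lambda}$ and $y_{d}e_{\lambda}$ — exactly a contribution of the form $\sum_{i}Y_{i}$ allowed in the statement.

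Now suppose $s_{U^{(k)}}$ is reduced, so $l(w)=k$; set $t=t^{\lambda}\circ w$, a $\lambda$-tableau with $d(t)=w$. If $t$ is standard, Lemma $\ref{45}$ gives $\psi_{U^{(k)}}^{*}e_{\lambda}=\psi_{d(t)}^{*}e_{\lambda}+\sum_{l<k}c(W^{(l)})\psi_{W^{(l)}}^{*}e_{\lambda}$, where $\psi_{d(t)}^{*}$ is taken along the fixed official reduced expression; the first summand is the term $\psi_{d(t)}^{*}e_{\lambda}$ of the statement (with coefficient $1$) and the remaining summands are handled inductively. If $t$ is not standard, Lemma $\ref{41}$ yields a Garnir tableau $g\in Gar(\lambda)$ and $w'\in\mathfrak{S}_n$ with $t=g\circ w'$ and $l(w)=l(d(g))+l(w')$, so $w=d(g)w'$ with lengths adding. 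Taking the reduced expression of $w$ that concatenates reduced expressions of $d(g)$ and $w'$ and applying Lemma $\ref{45}$ again, $\psi_{U^{(k)}}^{*}e_{\lambda}=\psi_{w'}^{*}\psi_{d(g)}^{*}e_{\lambda}+(\text{strictly shorter terms})$; by $(\ref{sym})$ together with the compatibility of the $\mathfrak{S}_n$-actions on tableaux and on residue sequences, $\psi_{d(g)}^{*}e_{\lambda}=e(g)\psi_{d(g)}^{*}$, hence $\psi_{w'}^{*}\psi_{d(g)}^{*}e_{\lambda}=\psi_{w'}^{*}e(g)\psi_{d(g)}^{*}$ lies in the two-sided ideal generated by $e(g)$, i.e. it is a valid $D_{g}$; the shorter terms are handled inductively. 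Summing the three kinds of contributions produced through the recursion gives the asserted identity.

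The hard part is the bookkeeping in the non-reduced case: one must check that straightening a non-reduced word by braid moves terminates at a configuration with two equal adjacent letters, that every error term that is ever generated — by $(\ref{26eq})$, by $(\ref{25eq})$, or while moving a $y$ towards $e_{\lambda}$ — is zero, strictly shorter in $\psi$-length, or already of the admissible shape $\psi_{W}^{*}y_{i}e_{\lambda}$, and that none of these manipulations leaves the quotient $TL_{r,1,n}^{\Lambda}$. The hypothesis $e\neq 2$, which $(\ref{pmres1})$ supplies, is precisely what removes the double-bond cases of $(\ref{25eq})$ and $(\ref{26eq})$ and keeps every error term in its simplest shape; without it the same argument still works but the bookkeeping becomes heavier.
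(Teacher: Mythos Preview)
Your proposal is correct and follows essentially the same approach as the paper: induction on $k$, with the same three-way split into (reduced, standard), (reduced, non-standard via Lemma~\ref{41}), and (non-reduced via a $\psi_r^2$ collapse using~(\ref{25eq}) followed by transporting the resulting $y$'s toward $e_\lambda$), all glued together by Lemma~\ref{45}. Your explicit remark that $(\ref{pmres1})$ forces $e\neq 2$, thereby eliminating the double-bond cases of $(\ref{25eq})$ and $(\ref{26eq})$, is a useful clarification that the paper leaves implicit; the only organisational difference is that the paper locates the $\psi_r^2$ in the non-reduced case via the largest reduced prefix and the exchange condition rather than by iterating braid moves on the full word, but the two manoeuvres are equivalent.
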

\begin{proof}
	We prove this by induction on $k$. If $k=0$, we can get the form directly. Assume this is true for any sequence $U^{(l)}$ with $l<k$ for a positive integer $k$. For a sequence $U^{(k)}$, we consider three cases:
	
	1. $s_{U^{(k)}}$ is a reduced expression of some $w\in \mathfrak{S}_n$ and $t^{\lambda}\circ w$ is a standard tableau. Denote by $s$ the standard tableau $t^{\lambda}\circ w$. Let $V^{(k)}$ be the sequence such that $	\psi_{V^{(k)}}^*$ is the chosen $\psi_{d(s)}^*$ in the definition of the cellular basis. Then $s_{V^{(k)}}$ and $s_{U^{(k)}}$ are two reduced expressions of $w$ with $k=l(w)$, so we have $V^{(k)}\stackrel{k}{\sim}U^{(k)}$ by Lemma $\ref{45}$. Thus,
	\begin{equation}
		\psi_{U^{(k)}}^*e_{\lambda}=\psi_{d(s)}^*e_{\lambda}+\sum_{l<k} c(W^{(l)})\psi_{W^{(l)}}^*e_{\lambda}.
	\end{equation}
    By the inductive assumption, $\psi_{W^{(l)}}^*e_{\lambda}$ is of the form in $(\ref{89})$.
    Therefore, $\psi_{U^{(k)}}^*e_{\lambda}$ can be transformed into the form in $(\ref{89})$.
    
    2. $s_{U^{(k)}}$ is a reduced expression of some $w\in \mathfrak{S}_n$, but $t^{\lambda}\circ w$ is not a standard tableau. By Lemma $\ref{41}$, there exists a Garnir tableau $g$ and an element $w_1\in \mathfrak{S}_n$ such that $t^{\lambda}\circ w=g\circ w_1$. Let $V(g)$ and $V(w_1)$ be two sequences consisting of numbers in $\{1,2,\dots,n-1 \} $ such that $s_{V(g)}$ and $s_V(w_1)$ are reduced expressions of $d(g)$ and $w_1$ respectively. Let $V^{(k)}=V(g)V(w_1)$ be the combination of these two sequences. Then $s_{V^{(k)}}$ is a reduced expression of $w$ as well. By Lemma $\ref{45}$, $V^{(k)}\stackrel{k}{\sim}U^{(k)}$. So we have
    \begin{equation*}
    \begin{aligned}
    	\psi_{U^{(k)}}^*e_{\lambda}
    	&=\psi_{V^{(k)}}^*e_{\lambda}+\sum_{l<k} c(W^{(l)})\psi_{W^{(l)}}^*e_{\lambda}\\
    	&=\psi_{V(w_1)}^*\psi_{V(g)}^*e_{\lambda}+\sum_{l<k} c(W^{(l)})\psi_{W^{(l)}}^*e_{\lambda}\\
    	&=\psi_{V(w_1)}^*e(i^g)\psi_{V(g)}^*+\sum_{l<k} c(W^{(l)})\psi_{W^{(l)}}^*e_{\lambda}	.
    \end{aligned}        
    \end{equation*}

    The first term is in the two-sided ideal generated by $e(i^g)$ and the other terms satisfy that the length of the sequence decreases strictly, so the inductive assumption can be used.
    
    3. If $s_{U^{(k)}}$ is not a reduced expression of any $w\in \mathfrak{S}_n$, let $l$ be the largest number such that $s_{U^{(k)}|_l}$ is a reduced expression of some $w\in \mathfrak{S}_n$ where $U^{(k)}|_l$ is the sub-sequence of $U^{(k)}$ consisting of the first $l$ terms. By the exchange condition, there exists a sequence $V^{(l)}$ ending with $U^{(k)}_{l+1}$ such that $s_{V^{(l)}}$ is a reduced expression for $s_{U^{(k)}|_l}$. By Lemma $\ref{45}$, $V^{(l)}\stackrel{k}{\sim}U^{(k)}|_l$. Therefore, we have
    \begin{equation*}
       \begin{aligned}
    	\psi_{U^{(k)}}^*e_{\lambda}&=(\psi_{U^{(k)}|_l}\psi_{U^{(k)}_{l+1}}\psi_{U^{(k-l-1)}})^*e_{\lambda}\\
    	&=(\psi_{V^{(l)}}\psi_{U^{(k)}_{l+1}}\psi_{U^{(k-l-1)}})^*e_{\lambda}+\sum_{l<k} c(W^{(l)})\psi_{W^{(l)}}^*e_{\lambda}\\
    	&=(\psi_{V^{(l)}|_{l-1}}\psi_{U^{(k)}_{l+1}}^2\psi_{U^{(k-l-1)}})^*e_{\lambda}+\sum_{l<k} c(W^{(l)})\psi_{W^{(l)}}^*e_{\lambda}\\
    	&=\psi_{U^{(k-l-1)}}^*e(i)\psi_{U^{(k)}_{l+1}}^2\psi_{V^{(l)}|_{l-1}}^*+\sum_{l<k} c(W^{(l)})\psi_{W^{(l)}}^*e_{\lambda}	.
    \end{aligned}     
    \end{equation*}

    For the same reason as above, we only need to deal with the first term. By ($\ref{25eq}$), $e(i)\psi_{U^{(k)}_{l+1}}^2$ is $0$, $e(i)$, $	(y_{s+1}-y_s)e(i) $ or 	$(y_s-y_{s+1})e(i) $. The first case is trivial. The second case leads to a strictly shorter sequence. If we get $	(y_{s+1}-y_s)e(i) $ or 	$(y_s-y_{s+1})e(i) $, ($\ref{23eq}$) can help to move $y_s$ to the right to get some elements $Y_i$. The error term will also lead to a strictly shorter sequence which is covered by the inductive assumption.
    
    Therefore, for any sequence $U^{(k)}$ of length $k$, $\psi_{U^{(k)}}$ can be transformed into the form in $(\ref{89})$. Thus the lemma has been proved.
\end{proof}

It should be remarked that the form we get in the lemma above does not depend on the dominant weight $\Lambda$, given the fact that only the generating relations $(\ref{18eq})-(\ref{26eq})$ are used in the procedure. Therefore, this lemma builds a bridge between a general $TL_{r,1,n}^{\Lambda}$ and a Temperley-Lieb algebra of type $B_n$.

To be more precise, let $\Lambda=\Lambda_{i_1}+\Lambda_{i_2}+\dots +\Lambda_{i_r}$ be a dominant weight and $TL_{r,1,n}^{\Lambda}$ be the corresponding generalised Temperley-Lieb algebra. Let $\lambda \in \mathfrak{B}_n^{(r)}-\mathfrak{D}_n^{(r)}$ be a multipartition in which the $u^{th}$ and $v^{th}$ components are non-empty. Denote by $W(\lambda)$ the cell module of $TL_{r,1,n}^{\Lambda}$ corresponding to $\lambda$ and by $\phi_{\lambda}$ the bilinear form on $W(\lambda)$.

Let $\Lambda'=\Lambda_{i_u}+\Lambda_{i_v}$ and $TLB_n^{\Lambda'}$ be the corresponding Temperley-Lieb algebra of Type $B_n$. Let $\lambda'$ be the bipartition with the same shape as the non-empty part of $\lambda$. For example, if $\lambda=((1^a),\emptyset,(1^{n-a}),\emptyset,\dots,\emptyset)$ is the multipartition of $n$ with $a$ nodes in the first partition and $n-a$ in the third, the corresponding bipartition $\lambda'=((1^a),(1^{n-a}))$ and the dominant weight $\Lambda'=\Lambda_{i_1}+\Lambda_{i_3}$.

Denote by $W(\lambda')$ and $\phi_{\lambda'}$ the corresponding cell module of $TLB_n^{\Lambda'}$ and the bilinear form. Let $f$ be the natural map from $Std(\lambda)$ to $Std(\lambda')$. Then we have

\begin{cor} $\label{co47}$
	$\phi_{\lambda}(s,t)=\phi_{\lambda'}(f(s),f(t))$, for all $s,t\in Std(\lambda)$.
\end{cor}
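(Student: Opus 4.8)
The plan is to express $\phi_\lambda$ through the $\Lambda$-free normal form of Lemma~\ref{algo} and to observe that this normal form sees only the shape of $\lambda$ together with the residues attached to its two non-empty components, data which agrees with the corresponding data for $\lambda'$ (recall that $TLB_n^{\Lambda'}$ is just $TL_{2,1,n}^{\Lambda'}$). First I would fix compatible official sets of reduced expressions for $\lambda$ and $\lambda'$; this is legitimate, since the construction of an official set uses only the poset of standard tableaux of the given shape and, for a one-column multipartition, the total order on nodes compares two nodes by their rows, breaking ties by component index. Hence $f$ is an order-isomorphism of the node poset of $\lambda$ (restricted to its non-empty components $u<v$) onto that of $\lambda'$, it sends $t^\lambda$ to $t^{\lambda'}$, and $d(f(s))=d(s)$ under the induced identification of the two copies of $\mathfrak{S}_n$. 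Taking $s'=t'=t^\lambda$ in $(\ref{81})$ and using $d(t^\lambda)=1$, Definition~\ref{Df28} gives $C^\lambda_{t^\lambda,t}=e_\lambda\psi_{d(t)}$, $C^\lambda_{s,t^\lambda}=\psi_{d(s)}^{*}e_\lambda$ and $C^\lambda_{t^\lambda,t^\lambda}=e_\lambda$, so that
\begin{equation*}
 e_\lambda\,\psi_{d(t)}\,\psi_{d(s)}^{*}\,e_\lambda\;\equiv\;\phi_\lambda(C_t,C_s)\,e_\lambda \pmod{TL_{r,1,n}^{\triangleleft\lambda}},
\end{equation*}
and the analogous congruence holds in $TLB_n^{\Lambda'}$ for $\phi_{\lambda'}$ modulo its own lower-term ideal. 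Because $d(f(s))=d(s)$, both left-hand sides arise by evaluating one and the same word $\psi_W^{*}$, with $W$ built from the official expressions of $d(t)$ and $d(s)$ so that $\psi_W^{*}=\psi_{d(t)}\psi_{d(s)}^{*}$, against $e_\lambda$ in $TL_{r,1,n}^\Lambda$ and against $e_{\lambda'}$ in $TLB_n^{\Lambda'}$.

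Next I would apply Lemma~\ref{algo} to $\psi_W^{*}e_\lambda$, obtaining the normal form $(\ref{89})$; by Lemma~\ref{lrh lm} and Lemma~\ref{42} the terms $Y_i$ and $D_g$ occurring there lie in $TL_{r,1,n}^{\triangleleft\lambda}$, so modulo that two-sided ideal $\psi_W^{*}e_\lambda\equiv\sum_{s'\in Std(\lambda)}c_W(s')\,\psi_{d(s')}^{*}e_\lambda$. Left-multiplying by $e_\lambda$ and using $e(\mathbf{i})\,\psi_{d(s')}^{*}e_\lambda=\delta_{\mathbf{i},\,res^\Lambda(s')}\,\psi_{d(s')}^{*}e_\lambda$, a consequence of $(\ref{sym})$ and the orthogonality of the $e(\mathbf{i})$ (here $res^\Lambda(t^\lambda)$ and $res^\Lambda(s')$ differ by the permutation $d(s')$), only the $s'$ with $res^\Lambda(s')=res^\Lambda(t^\lambda)$ survive. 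Comparing with the displayed congruence and using the linear independence of the $C^\lambda_{s',t^\lambda}=\psi_{d(s')}^{*}e_\lambda$ modulo $TL_{r,1,n}^{\triangleleft\lambda}$ yields $\phi_\lambda(C_t,C_s)=c_W(t^\lambda)$, while the identical computation in $TLB_n^{\Lambda'}$ gives $\phi_{\lambda'}(C_{f(t)},C_{f(s)})=c_W(t^{\lambda'})$. The Corollary is thus reduced to the equality $c_W(t^\lambda)=c_W(t^{\lambda'})$.

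For this last equality I would invoke the remark following Lemma~\ref{algo}: the rewriting procedure there uses only the relations $(\ref{18eq})$--$(\ref{26eq})$ together with combinatorics of reduced words in $\mathfrak{S}_n$ (Lemma~\ref{45}) and the passage from non-standard to Garnir tableaux (Lemma~\ref{41}). Every idempotent met along the branch that contributes to the coefficients $c_W(s')$ is of the form $e(res^\Lambda(x))$ for $x$ a tableau of shape $\lambda$, since any branch passing through a Garnir idempotent or through some $y_ke_\lambda$ is peeled off into $TL_{r,1,n}^{\triangleleft\lambda}$ and contributes nothing; and by $(\ref{eq75})$ the residue of a node $(a,1,c)$ of $\lambda$ is $i_c-a+1$, which for the two non-empty components $u<v$ equals $i_u-a+1$ or $i_v-a+1$, matching the residue of the corresponding node of $\lambda'$ for $\Lambda'=\Lambda_{i_u}+\Lambda_{i_v}$. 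Since the ambient quiver $\Gamma$ and the separation $|i_u-i_v|>2$ inherited from $(\ref{reslmd})$ are also the same, at every step the same case of $(\ref{18eq})$--$(\ref{26eq})$ fires, the same Garnir and $y$-terms are split off, and the same scalars $c_W(s')$ appear, matched by $f$; in particular $c_W(t^\lambda)=c_W(t^{\lambda'})$, and the Corollary follows. The main obstacle is exactly the bookkeeping behind the assertion that every idempotent encountered is $e(res^\Lambda(x))$ with $x$ of shape $\lambda$: one must verify carefully that the residue-driven branching of the normal-form algorithm never references anything beyond the nodes of components $u$ and $v$, so that it is genuinely governed by the $\{u,v\}$-part of $\Lambda$ and therefore transports verbatim to $\lambda'$ and $TLB_n^{\Lambda'}$.
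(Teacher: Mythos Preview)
Your proposal is correct and follows essentially the same approach as the paper: both arguments express $\phi_\lambda(s,t)$ via $e_\lambda\psi_{d(s)}\psi_{d(t)}^*e_\lambda$, apply Lemma~\ref{algo} to obtain the normal form~$(\ref{89})$, discard the $Y_i$ and $D_g$ terms using Lemmas~\ref{lrh lm} and~\ref{42}, and then invoke the $\Lambda$-independence of the rewriting (the remark following Lemma~\ref{algo}) to conclude that the surviving scalar agrees for $\lambda$ and $\lambda'$. Your version is more explicit about two points that the paper leaves implicit---the extraction of the $t^\lambda$-coefficient via left-multiplication by $e_\lambda$ and orthogonality, and the verification that the residue sequences governing the case-splits in $(\ref{25eq})$--$(\ref{26eq})$ coincide under $f$---but these are elaborations of the same argument rather than a different route.
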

\begin{proof}
	By definition, we have
	\begin{equation*}
		\phi_{\lambda}(s,t) e_{\lambda}= e_{\lambda}\psi_{d(s)}\psi_{d(t)}^*e_{\lambda}\text{    mod } TL_{r,1,n}^{\triangleleft\lambda}.
	\end{equation*}
     and
	\begin{equation*}
	\phi_{\lambda'}(f(s),f(t)) e_{\lambda'}= e_{\lambda'}\psi_{d(f(s))}\psi_{d(f(t))}^*e_{\lambda'}\text{    mod } TLB_n^{\triangleleft\lambda'}.
    \end{equation*}     
    
     The right hand sides of the two equations share the same expression. But this is not enough to show $\phi_{\lambda}(s,t)=\phi_{\lambda'}(f(s),f(t))$ because $TL_{r,1,n}^{\triangleleft\lambda}\neq TLB_n^{\triangleleft\lambda'}$. By the fact that $e(i)$'s are orthogonal idempotents, we have $ e_{\lambda}\psi_{d(s)}\psi_{d(t)}^*e_{\lambda}=\psi_{d(s)}\psi_{d(t)}^*e_{\lambda} $ in both $TL_{r,1,n}^{\Lambda}$ and $TLB_n^{\Lambda'}$ if $ e_{\lambda}\psi_{d(s)}\psi_{d(t)}^*e_{\lambda}\neq 0 $ .
     Lemma $(\ref{algo})$ implies that the right hand sides of the two equations can be transformed into the same form:
     	\begin{equation} 
     	\psi_{d(s)}\psi_{d(t)}^*e_{\lambda}=c_{s,t} e_{\lambda}+\sum_{i=1}^{n} Y_i+\sum_{g\in Gar(\lambda)} D_g.
     \end{equation}
     
     As the transformation only depends on the generating relations $(\ref{18eq})-(\ref{26eq})$ which are shared by $TL_{r,1,n}^{\Lambda}$ and $TLB_n^{\Lambda'}$, we have $c_{s,t}=c_{f(s),f(t)}$.
     Lemma $\ref{lrh lm}$ and Lemma $\ref{42}$ show that $Y_i$ and $D_g$ are in $TL_{r,1,n}^{\triangleleft\lambda}$ and $TLB_n^{\triangleleft\lambda'}$. So we have
     \begin{equation*}
     	\phi_{\lambda}(s,t)=c_{s,t}
     \end{equation*}
      and
      \begin{equation*}
      	\phi_{\lambda'}(f(s),f(t))=c_{f(s),f(t)}.
      \end{equation*}
      Therefore, we have
           \begin{equation*}
      	\phi_{\lambda}(s,t)=\phi_{\lambda'}(f(s),f(t)).
      \end{equation*}
\end{proof}

Further, let $L(\lambda)$ and $L(\lambda')$ be the simple modules of $TL_{r,1,n}^{\Lambda}$ and $TLB_n^{\Lambda'}$ respectively in Theorem $\ref{33}$. As a direct consequence, we have
\begin{cor}
	$dim(L(\lambda))=dim(L(\lambda'))$.
\end{cor}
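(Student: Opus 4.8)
The plan is to express each side of the claimed equality as the rank of a Gram matrix and then quote Corollary~\ref{co47} directly. By Theorem~\ref{33} we have $L(\lambda)=W(\lambda)/\mathrm{rad}(\phi_{\lambda})$ and $L(\lambda')=W(\lambda')/\mathrm{rad}(\phi_{\lambda'})$, so $\dim L(\lambda)=\dim W(\lambda)-\dim\mathrm{rad}(\phi_{\lambda})$ and similarly for $\lambda'$. Since $W(\lambda)$ is free on $\{C_s\mid s\in Std(\lambda)\}$ and $\mathrm{rad}(\phi_{\lambda})$ is precisely the null space of the Gram matrix $G_{\lambda}:=\big(\phi_{\lambda}(C_s,C_t)\big)_{s,t\in Std(\lambda)}$, one has $\dim L(\lambda)=\mathrm{rank}(G_{\lambda})$ over the field $R$; likewise $\dim L(\lambda')=\mathrm{rank}(G_{\lambda'})$ for $G_{\lambda'}:=\big(\phi_{\lambda'}(C_{s'},C_{t'})\big)_{s',t'\in Std(\lambda')}$. (Note that symmetry of $\phi_{\lambda}$ is not even needed here, only the identity $\dim W-\dim(\text{null space})=\mathrm{rank}$.)

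Next I would record that the natural map $f\colon Std(\lambda)\to Std(\lambda')$ is a bijection. Writing $u<v$ for the indices of the two non-empty components of $\lambda$, the assignment sending the $u$-th component of $\lambda$ to the first component of $\lambda'$ and the $v$-th to the second is an isomorphism of the underlying totally ordered sets of one-column nodes, since $u<v$ means the comparison rule of Section~\ref{1cmltp} is respected; it therefore carries standard tableaux to standard tableaux bijectively, and this bijection is $f$. In particular $G_{\lambda}$ and $G_{\lambda'}$ are square matrices of the same size, indexed by $Std(\lambda)$ and $Std(\lambda')$ respectively.

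Finally, relabelling the rows and columns of $G_{\lambda'}$ along $f$ is a simultaneous relabelling by a bijection and so leaves the rank unchanged, while by Corollary~\ref{co47} the relabelled matrix coincides entry by entry with $G_{\lambda}$. Hence $\mathrm{rank}(G_{\lambda})=\mathrm{rank}(G_{\lambda'})$, which gives $\dim L(\lambda)=\dim L(\lambda')$. I do not expect a genuine obstacle in this argument: essentially all of the content already sits in Corollary~\ref{co47}, and the only point deserving a word of care is the verification that $f$ is a well-defined bijection, which is immediate from the definition of the order on one-column nodes.
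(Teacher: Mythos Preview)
Your argument is correct and is exactly the ``direct consequence'' the paper has in mind: the paper states the corollary without proof, simply remarking that it follows immediately from Corollary~\ref{co47}, and your Gram-matrix computation is precisely the one-line unpacking of that remark. There is nothing to add.
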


\subsection{Decomposition numbers}
We next concentrate on the decomposition numbers of $TL_{r,1,n}^{\Lambda}$. In a similar way to the observation above, we claim that the decomposition numbers can be obtained from those of $TLB_n^{\Lambda'}$ where $\Lambda'$ is the dominant weight defined in terms of the cell module $C_{\lambda}$ in the last subsection. To prove this, we show that the cell module $W(\lambda)$ and the simple module $L(\lambda)$ of $TL_{r,1,n}^{\Lambda}$ are isomorphic to those of a Temperley-Lieb algebra of type $B_n$ as $TLB_n^{\Lambda'}$-modules.

%The following lemma is implied by definitions:

\begin{lem}
\label{tlquot}
	$TLB_n^{\Lambda'}$ is a quotient of $TL_{r,1,n}^{\Lambda}$ by the two-sided ideal generated by all $e(\mathbf{i})=e(\mathbf{i}_1,\mathbf{i}_2,\dots,\mathbf{i}_n)$ such that $(\Lambda',\alpha_{\mathbf{i}_1})=0$.
\end{lem}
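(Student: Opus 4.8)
The plan is to argue entirely on the KLR side. By Theorem \ref{dftl} we have $TL_{r,1,n}^{\Lambda}\cong\mathcal{R}_n^{\Lambda}/\mathcal{J}_{n}(\Lambda)$ with $\mathcal{J}_{n}(\Lambda)$ as in $(\ref{ieq})$, while $TLB_n^{\Lambda'}=TL_{2,1,n}^{\Lambda'}\cong\mathcal{R}_n^{\Lambda'}/\mathcal{J}_{n}(\Lambda')$; since $\Lambda'$ involves only two fundamental weights, the second (``three distinct components'') summand of $\mathcal{J}_{n}(\Lambda')$ is empty, so $\mathcal{J}_{n}(\Lambda')$ equals the generalised blob ideal $\mathcal{J}_{1,n}(\Lambda')$ of Theorem \ref{thm 12}. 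Note also that $\{i\in I:(\alpha_i,\Lambda')>0\}=\{i_u,i_v\}\subseteq\{i_1,\dots,i_r\}=\{i\in I:(\alpha_i,\Lambda)>0\}$. The idea is to compare these two presentations through a surjection $\mathcal{R}_n^{\Lambda}\twoheadrightarrow\mathcal{R}_n^{\Lambda'}$ and then apply the third isomorphism theorem.

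First I would record that $\mathcal{R}_n^{\Lambda'}$ is a cyclotomic quotient of $\mathcal{R}_n^{\Lambda}$: the assignment $e(\mathbf{i})\mapsto e(\mathbf{i})$, $y_r\mapsto y_r$, $\psi_s\mapsto\psi_s$ extends to a surjective $R$-algebra homomorphism $\pi\colon\mathcal{R}_n^{\Lambda}\to\mathcal{R}_n^{\Lambda'}$ with $\ker\pi=\langle e(\mathbf{i}):(\Lambda',\alpha_{\mathbf{i}_1})=0\rangle$. This is checked directly from Definition \ref{klrdf}: all defining relations other than the cyclotomic one $y_1^{(\Lambda,\alpha_{\mathbf{i}_1})}e(\mathbf{i})=0$ are the same for the two weights, and because $(\Lambda,\alpha_j),(\Lambda',\alpha_j)\in\{0,1\}$ with $(\Lambda',\alpha_j)\le(\Lambda,\alpha_j)$, the $\Lambda'$-cyclotomic relations follow from those for $\Lambda$ together with the relations $e(\mathbf{i})=0$ for $(\Lambda',\alpha_{\mathbf{i}_1})=0$; a two-sided inverse of the induced map out of $\mathcal{R}_n^{\Lambda}/\langle e(\mathbf{i}):(\Lambda',\alpha_{\mathbf{i}_1})=0\rangle$ is built by the same recipe, which identifies $\ker\pi$ exactly.

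Given $\pi$, the lemma reduces to the equality of two-sided ideals $\pi(\mathcal{J}_{n}(\Lambda))=\mathcal{J}_{1,n}(\Lambda')$ in $\mathcal{R}_n^{\Lambda'}$, for then
\[
TL_{r,1,n}^{\Lambda}\big/\big\langle e(\mathbf{i}):(\Lambda',\alpha_{\mathbf{i}_1})=0\big\rangle_{TL_{r,1,n}^{\Lambda}}=\mathcal{R}_n^{\Lambda}\big/\big(\mathcal{J}_{n}(\Lambda)+\ker\pi\big)\cong\mathcal{R}_n^{\Lambda'}\big/\pi(\mathcal{J}_{n}(\Lambda))=\mathcal{R}_n^{\Lambda'}\big/\mathcal{J}_{1,n}(\Lambda')=TLB_n^{\Lambda'}.
\]
Recall from $(\ref{ieq})$ that $\mathcal{J}_{n}(\Lambda)$ is generated by two families: the blob idempotents $e(i,i+1,\mathbf{j})$ with $(\alpha_i,\Lambda)>0$, and the ``extra'' idempotents $e(j_1,j_2,j_3,\mathbf{k})$ with $j_1,j_2,j_3$ pairwise distinct and $(\alpha_{j_l},\Lambda)>0$ for $l=1,2,3$. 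The blob family is harmless: $\pi(e(i,i+1,\mathbf{j}))=e(i,i+1,\mathbf{j})$, which is $0$ in $\mathcal{R}_n^{\Lambda'}$ when $i\notin\{i_u,i_v\}$ and is a defining generator of $\mathcal{J}_{1,n}(\Lambda')$ when $i\in\{i_u,i_v\}$, so the images of the blob generators span exactly $\mathcal{J}_{1,n}(\Lambda')$.

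The one substantive step -- and the main obstacle -- is to show that each extra generator lies in $\ker\pi$. Since $j_1,j_2,j_3$ are three pairwise distinct elements of $\{i_1,\dots,i_r\}$ while $\{i_u,i_v\}$ has only two elements, some $j_p\notin\{i_u,i_v\}$. If $j_1\notin\{i_u,i_v\}$ this is immediate; otherwise one transports the ``bad'' residue into the first position by conjugating with KLR generators, using $(\ref{sym})$ together with the relation $\psi_r^2e(\mathbf{i})=e(\mathbf{i})$ from $(\ref{25eq})$. The latter applies because the restriction $(\ref{reslmd})$ forces $|i_a-i_b|>2$, so no two of $j_1,j_2,j_3$ are equal or adjacent in the quiver $\Gamma$; concretely, if $j_2\notin\{i_u,i_v\}$ then
\[
e(j_1,j_2,j_3,\mathbf{k})=e(j_1,j_2,j_3,\mathbf{k})\psi_1^2=\psi_1\,e(j_2,j_1,j_3,\mathbf{k})\,\psi_1\in\langle e(j_2,j_1,j_3,\mathbf{k})\rangle\subseteq\ker\pi,
\]
and if instead $j_3\notin\{i_u,i_v\}$ one first swaps positions $2$ and $3$ with $\psi_2$ and repeats. (This vanishing is, in effect, the content of Lemma \ref{triquo}.) This yields $\pi(\mathcal{J}_{n}(\Lambda))=\mathcal{J}_{1,n}(\Lambda')$ and hence the lemma. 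Everything apart from this step is formal bookkeeping with the third isomorphism theorem and the explicit ideals of $(\ref{ieq})$ and Theorem \ref{thm 12}; the only point requiring care is that the conjugation argument invokes solely relations valid in $\mathcal{R}_n^{\Lambda}$, which is exactly what $(\ref{reslmd})$ ensures.
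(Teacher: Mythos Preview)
Your proof is correct and follows the same overall architecture as the paper: both identify $\mathcal{R}_n^{\Lambda'}$ as the quotient of $\mathcal{R}_n^{\Lambda}$ by $\langle e(\mathbf{i}):(\Lambda',\alpha_{\mathbf{i}_1})=0\rangle$, and both reduce the lemma to showing that the ``extra'' idempotents $e(j_1,j_2,j_3,\mathbf{k})$ (with the $j_l$ pairwise distinct and $(\alpha_{j_l},\Lambda)>0$) die in $TLB_n^{\Lambda'}$.

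The genuine difference is in how that key step is carried out. The paper argues exactly as in the proof of Lemma~\ref{triquo}: it passes to $TLB_n^{\Lambda'}$, uses its cellular basis from Theorem~\ref{cb33}, and checks that $e(j_1,j_2,j_3,\mathbf{k})C_{s,t}^{\lambda}=0$ for every basis element, whence the idempotent is zero in the quotient. You instead give a direct KLR computation: since at least one $j_p\notin\{i_u,i_v\}$ and $(\ref{reslmd})$ guarantees $|j_a-j_b|>2$, the second case of $(\ref{25eq})$ gives $\psi_r^2e(\mathbf{i})=e(\mathbf{i})$, so conjugating by $\psi_1$ (and if needed $\psi_2$) moves $j_p$ into the first slot and exhibits the idempotent as lying in $\ker\pi$. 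Your argument is more elementary and self-contained, avoiding any appeal to the cellular structure of the target algebra; the paper's route has the virtue of reusing machinery already in place. One small remark: your parenthetical that this vanishing ``is, in effect, the content of Lemma~\ref{triquo}'' is not quite accurate --- that lemma concerns the equality $\mathcal{I}_{\mathfrak{D}}=\mathcal{J}_{2,n}$ inside $B_{r,n}$, which is a related but different statement; the paper \emph{reuses the method} of that lemma here, but your conjugation argument is a genuine alternative, not a repackaging of it.
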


\begin{proof}
	By comparing the generators and relations in Definition $\ref{klrdf}$, we see that $\mathcal{R}_n^{\Lambda'}$ is a quotient of $\mathcal{R}_n^{\Lambda}$ by the two-sided ideal generated by all $e(\mathbf{i})$ such that $(\Lambda',\alpha_{\mathbf{i}_1})=0$. 
	
	Fix an idempotent $e(i_1,i_2,i_3,\mathbf{j}')\in \mathcal{R}_n^{\Lambda}$ where $(\alpha_{i_a},\Lambda)>0$ for $a=1,2,3$. We next show  $ e(i_1,i_2,i_3,\mathbf{j}')$ is in the two-sided ideal of $\mathcal{R}_n^{\Lambda}$, 
	\[
	\langle e(\mathbf{i}),  e(i,i+1,\mathbf{j}) , e(i_1,i_2,i_3,\mathbf{j}')|(\alpha_i,\Lambda')>0,  
	(\alpha_{i_j},\Lambda')>0 \rangle,
	\]
	where  $\mathbf{i}$ runs over $I^n$ such that $(\Lambda',\alpha_{\mathbf{i}_1})=0$. 

	By Theorem $\ref{dftl}$ we have
	\begin{equation*}
			\begin{aligned}
			TLB_n^{\Lambda'}&=\mathcal{R}_n^{\Lambda'}/\langle  e(i,i+1,\mathbf{j}) , e(i_1,i_2,i_3,\mathbf{j}') |(\alpha_i,\Lambda')>0, (\alpha_{i_j},\Lambda')>0\text{ for }j=1,2,3\rangle\\
			    &=\mathcal{R}_n^{\Lambda}/\langle e(\mathbf{i}),  e(i,i+1,\mathbf{j}) , e(i_1,i_2,i_3,\mathbf{j}')|(\alpha_i,\Lambda')>0, (\alpha_{i_j},\Lambda')>0 \rangle
%			    &=\mathcal{R}_n^{\Lambda}/\langle e(\mathbf{i}),  e(i,i+1,\mathbf{j}) , e(i_1,i_2,i_3,\mathbf{j}')|(\alpha_i,\Lambda)>0, (\alpha_{i_j},\Lambda)>0 \rangle\\
%			    &=TL_{r,1,n}^{\Lambda}/\langle e(\mathbf{i}) \rangle
		\end{aligned}    
	\end{equation*}

	where $\mathbf{i}$ runs over $I^n$ such that $(\Lambda',\alpha_{\mathbf{i}_1})=0$.
	Let $f$ be the quotient map from $\mathcal{R}_n^{\Lambda}$ to $TLB_n^{\Lambda'}$. It is enough to show that $f( e(i_1,i_2,i_3,\mathbf{j}'))=0$. By the same method as that in the proof of Lemma \ref{triquo}, we can get $f( e(i_1,i_2,i_3,\mathbf{j}'))C_{s,t}^{\lambda}=0$ where $C_{s,t}^{\lambda}$ runs over the cellular basis of $TLB_n^{\Lambda'}$ in Theorem \ref{cb33}. So $ e(i_1,i_2,i_3,\mathbf{j}')$ is in the corresponding two-sided ideal of $\mathcal{R}_n^{\Lambda}$, $\langle e(\mathbf{i}),  e(i,i+1,\mathbf{j}) , e(i_1,i_2,i_3,\mathbf{j}')|(\alpha_i,\Lambda')>0, (\alpha_{i_j},\Lambda')>0 \rangle$. 
	
	Therefore, we have
	\begin{equation*}
	\begin{aligned}
	    TLB_n^{\Lambda'}&=\mathcal{R}_n^{\Lambda}/\langle e(\mathbf{i}),  e(i,i+1,\mathbf{j}) , e(i_1,i_2,i_3,\mathbf{j}')|(\alpha_i,\Lambda')>0, (\alpha_{i_j},\Lambda')>0 \rangle\\
	    &=\mathcal{R}_n^{\Lambda}/\langle e(\mathbf{i}),  e(i,i+1,\mathbf{j}) , e(i_1,i_2,i_3,\mathbf{j}')|(\alpha_i,\Lambda)>0, (\alpha_{i_j},\Lambda)>0 \rangle\\
			    &=TL_{r,1,n}^{\Lambda}/\langle e(\mathbf{i}) \rangle
		\end{aligned}	    
	\end{equation*}

	where $\mathbf{i}$ runs over $I^n$ such that $(\Lambda',\alpha_{\mathbf{i}_1})=0$.
\end{proof}

We next show that further, the cell module $W(\lambda)$ can be regarded as a module of the 
quotient algebra $TLB_n^{\Lambda'}$.
\begin{lem} $\label{lm50}$
	Let $W(\lambda)$ be the cell module of $TL_{r,1,n}^{\Lambda}$ defined above. Then $W(\lambda)$ is a $TLB_n^{\Lambda'}$-module.
\end{lem}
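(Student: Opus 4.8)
The plan is to apply Lemma \ref{tlquot}. Since $TLB_n^{\Lambda'}$ is the quotient of $TL_{r,1,n}^{\Lambda}$ by the two-sided ideal $\mathcal{K}$ generated by the idempotents $e(\mathbf{i})$ with $(\Lambda',\alpha_{\mathbf{i}_1})=0$, it suffices to show that $\mathcal{K}$ annihilates $W(\lambda)$; the $TL_{r,1,n}^{\Lambda}$-action then factors through $TLB_n^{\Lambda'}$, giving the claim. Moreover $\mathcal{K}\,W(\lambda)=0$ will follow once $e(\mathbf{i})\,W(\lambda)=0$ for each of these $\mathbf{i}$, since for $a\,e(\mathbf{i})\,b\in\mathcal{K}$ we have $a\,e(\mathbf{i})\,b\,W(\lambda)\subseteq a\,e(\mathbf{i})\,W(\lambda)=0$. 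So the whole lemma reduces to computing how the idempotents $e(\mathbf{i})$ with $(\Lambda',\alpha_{\mathbf{i}_1})=0$ act on the cell module.

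I would first record the action of an \emph{arbitrary} idempotent $e(\mathbf{i})$ on the cellular basis $\{C_s:s\in Std(\lambda)\}$ of $W(\lambda)$. Using the isomorphism $C^{\lambda}\colon W(\lambda)\otimes W(\lambda)^{*}\to TL(\{\lambda\})$, $(C_s,C_t)\mapsto C_{s,t}^{\lambda}$, the vector $C_s$ is represented by $C_{s,t^{\lambda}}^{\lambda}=\psi_{d(s)}^{*}e_{\lambda}=e(s)\psi_{d(s)}^{*}$, where $e(s)=e(res^{\Lambda}(s))$ exactly as in the proof of Lemma \ref{triquo}. Since the $e(\mathbf{j})$ are pairwise orthogonal idempotents, $e(\mathbf{i})\,C_{s,t^{\lambda}}^{\lambda}=\delta_{\mathbf{i},\,res^{\Lambda}(s)}\,C_{s,t^{\lambda}}^{\lambda}$, an exact identity in $TL_{r,1,n}^{\Lambda}$ (with no terms from $TL_{r,1,n}^{\triangleleft\lambda}$). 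Comparing with the defining formula of the cellular action shows that the structure constants satisfy $r_{e(\mathbf{i})}(s',s)=\delta_{s',s}\,\delta_{\mathbf{i},\,res^{\Lambda}(s)}$, i.e. $e(\mathbf{i})$ acts on $W(\lambda)$ as the projection onto the span of $\{C_s:res^{\Lambda}(s)=\mathbf{i}\}$.

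The one real input is then the combinatorial observation that the first entry $\bigl(res^{\Lambda}(s)\bigr)_1$ lies in $\{i_u,i_v\}$ for every $s\in Std(\lambda)$. Indeed $\bigl(res^{\Lambda}(s)\bigr)_1=Res^{\Lambda}(\gamma)$, where $\gamma$ is the node of $\lambda$ containing the entry $1$; since $1$ is the minimal entry of a standard tableau, no node lies above $\gamma$, so $\gamma$ is the top node of a non-empty column of $\lambda$, i.e. $\gamma\in\{(1,1,u),(1,1,v)\}$, whence $Res^{\Lambda}(\gamma)\in\{i_u,i_v\}$ by (\ref{eq75}). As $i_u\neq i_v$ by (\ref{reslmd}), we get $(\Lambda',\alpha_{i_u})=(\Lambda',\alpha_{i_v})=1\neq 0$, so whenever $(\Lambda',\alpha_{\mathbf{i}_1})=0$ we must have $\mathbf{i}\neq res^{\Lambda}(s)$ for all $s\in Std(\lambda)$; therefore $e(\mathbf{i})\,C_s=0$ for all basis vectors and $e(\mathbf{i})\,W(\lambda)=0$. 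Combined with the first paragraph this finishes the proof. I do not anticipate a genuine obstacle; the only step deserving a word of care is that the \emph{exact} vanishing $e(\mathbf{i})\,C_{s,t^{\lambda}}^{\lambda}=0$ (rather than vanishing only modulo $TL_{r,1,n}^{\triangleleft\lambda}$) forces $r_{e(\mathbf{i})}(s',s)=0$, which holds because $\{C_{s',t^{\lambda}}^{\lambda}:s'\in Std(\lambda)\}$ together with a basis of $TL_{r,1,n}^{\triangleleft\lambda}$ is part of a basis of $TL_{r,1,n}^{\Lambda}$.
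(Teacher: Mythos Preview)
Your proposal is correct and follows essentially the same approach as the paper's proof: both arguments use Lemma~\ref{tlquot} to identify the kernel, compute $e(\mathbf{i})\psi_{d(s)}^*e_{\lambda}=e(\mathbf{i})e(s)\psi_{d(s)}^*$ via the orthogonality of the idempotents, and observe that the first entry of $res^{\Lambda}(s)$ must be $i_u$ or $i_v$ for any $s\in Std(\lambda)$, forcing $e(\mathbf{i})W(\lambda)=0$ whenever $(\Lambda',\alpha_{\mathbf{i}_1})=0$. Your version is simply more explicit about why the exact vanishing (rather than vanishing modulo lower terms) yields $r_{e(\mathbf{i})}(s',s)=0$, and about why the node containing $1$ lies at the top of a non-empty column; the paper takes these points for granted.
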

\begin{proof}
	For $t\in Std(\lambda)$, let $e(t)=e(j_1,j_2,\dots,j_n)$. Then by definition, $j_1=i_u\text{ or }i_v$. For any $e(\mathbf{i})$ such that $(\Lambda',\alpha_{\mathbf{i}_1})=0$, we have $\mathbf{i}_1\neq i_u\text{ or }i_v$. So
	\begin{equation}
		e(\mathbf{i})\psi_{d(t)}^*e_{\lambda}=e(\mathbf{i})e(t)\psi_{d(t)}^*=0
	\end{equation}
    which implies $r_{	e(\mathbf{i})}(t',t)=0$ for all $t,t'\in Std (\lambda)$. Therefore,
    \begin{equation}
    	e(\mathbf{i})W(\lambda)=0
    \end{equation}
    for all $e(\mathbf{i})$ such that $(\Lambda',\alpha_{\mathbf{i}_1})=0$. 
    That is, the ideal which defines $TLB_n^{\Lambda'}$ as a quotient of $TL_{r,1,n}^{\Lambda}$
    %/\langle e(\mathbf{i})|(\Lambda',\alpha_{\mathbf{i}_1})=0 \rangle$, 
    acts trivially on $W(\lambda)$. Hence $W(\lambda)$ is a $TLB_n^{\Lambda'}$-module.
\end{proof}

As a direct consequence, we have that the simple module $L(\lambda)=W(\lambda)/rad(\phi_{\lambda})$ is a module of $TLB_n^{\Lambda'}$:
\begin{lem}\label{lm51}
	Let $L(\lambda)$ be the simple module of $TL_{r,1,n}^{\Lambda}$. Then $L(\lambda)$ is a $TLB_n^{\Lambda'}$-module.
\end{lem}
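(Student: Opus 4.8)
The plan is to deduce this immediately from Lemma \ref{lm50} together with the observation that $L(\lambda)$ is a quotient of $W(\lambda)$ as a $TL_{r,1,n}^{\Lambda}$-module. Recall from Theorem \ref{33}(i) that $rad(\phi_{\lambda})$ is a $TL_{r,1,n}^{\Lambda}$-submodule of $W(\lambda)$; hence $L(\lambda)=W(\lambda)/rad(\phi_{\lambda})$ carries a natural $TL_{r,1,n}^{\Lambda}$-module structure, and the canonical surjection $W(\lambda)\twoheadrightarrow L(\lambda)$ is a homomorphism of $TL_{r,1,n}^{\Lambda}$-modules.

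Next I would invoke Lemma \ref{lm50}: writing $J:=\langle e(\mathbf{i})\mid (\Lambda',\alpha_{\mathbf{i}_1})=0\rangle$ for the two-sided ideal of $TL_{r,1,n}^{\Lambda}$ that, by Lemma \ref{tlquot}, is exactly the kernel of the quotient map $TL_{r,1,n}^{\Lambda}\twoheadrightarrow TLB_n^{\Lambda'}$, Lemma \ref{lm50} says $J\cdot W(\lambda)=0$. Since $J\cdot W(\lambda)=0$ and $L(\lambda)$ is a quotient of $W(\lambda)$ as a $TL_{r,1,n}^{\Lambda}$-module, we obtain $J\cdot L(\lambda)=0$ as well. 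Therefore the $TL_{r,1,n}^{\Lambda}$-action on $L(\lambda)$ factors through $TL_{r,1,n}^{\Lambda}/J\cong TLB_n^{\Lambda'}$, so $L(\lambda)$ is a $TLB_n^{\Lambda'}$-module, as claimed.

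There is essentially no obstacle: the substance is carried entirely by Lemmas \ref{tlquot} and \ref{lm50}, and what remains is the elementary fact that an ideal annihilating a module annihilates every quotient of it. The only point worth recording explicitly is that the $TLB_n^{\Lambda'}$-module structure on $L(\lambda)$ is obtained by inflation along $TL_{r,1,n}^{\Lambda}\twoheadrightarrow TLB_n^{\Lambda'}$, so that $L(\lambda)$ remains irreducible over $TLB_n^{\Lambda'}$ as well; this is what will be used in the decomposition-number comparison that follows.
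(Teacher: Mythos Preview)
Your proposal is correct and matches the paper's approach exactly: the paper states this lemma as a ``direct consequence'' of Lemma~\ref{lm50} without further proof, and your argument spells out precisely that direct consequence---$L(\lambda)=W(\lambda)/rad(\phi_{\lambda})$ is a quotient of $W(\lambda)$, so the ideal annihilating $W(\lambda)$ annihilates $L(\lambda)$ as well.
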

 Further, the next lemma shows that the two $TLB_n^{\Lambda'}$-modules obtained in the last two lemmas are exactly the corresponding cell and simple modules corresponding to an appropriate $\lambda'$.
\begin{lem} $\label{lm52}$
	Let $\lambda'$ be the bipartition with the same shape as the non-empty part of $\lambda$ and $W(\lambda')$, $L(\lambda')$ be the cell module and simple module of $TLB_n^{\Lambda'}$ corresponding to $\lambda'$ respectively. Then $W(\lambda')\simeq W(\lambda)$, $L(\lambda')\simeq L(\lambda)$ as $TLB_n^{\Lambda'}$-modules.
\end{lem}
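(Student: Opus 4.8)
The plan is to exhibit an explicit isomorphism on cellular bases and then invoke Corollary~\ref{co47} for the passage to simple modules. Write $u<v$ for the indices of the two non-empty components of $\lambda$ (the case of fewer than two non-empty components being trivial, as both cell modules are then one-dimensional), so that $\lambda'=(\lambda^{(u)},\lambda^{(v)})$ and $\Lambda'=\Lambda_{i_u}+\Lambda_{i_v}$. Let $f: Std(\lambda)\to Std(\lambda')$ be the bijection induced by the order isomorphism of node sets that carries the node in row $b$ of the $u$-th (resp.\ $v$-th) component of $\lambda$ to the node in row $b$ of the first (resp.\ second) component of $\lambda'$; it is well defined because $f$ sends columns of $\lambda$ bijectively to columns of $\lambda'$ and hence preserves standardness. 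Two elementary facts would be recorded first. Since $f$ relabels nodes while $\mathfrak{S}_n$ acts on entries, $f(t^{\lambda})=t^{\lambda'}$ and $f(t^{\lambda}\circ w)=t^{\lambda'}\circ w$, so $d(f(s))=d(s)$ for every $s$. And since $\Lambda'=\Lambda_{i_u}+\Lambda_{i_v}$, formula~(\ref{eq75}) gives $Res^{\Lambda'}(f(\gamma))=Res^{\Lambda}(\gamma)$ for every node $\gamma$ of $\lambda$, hence $res^{\Lambda'}\circ f=res^{\Lambda}$ on $Std(\lambda)$; in particular $e_{\lambda'}=e_{\lambda}$ and $e(f(s))=e(s)$ as residue idempotents $e(\mathbf i)$. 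I would moreover fix the official reduced expressions compatibly, taking $ORE(\lambda')$ to consist of the same words as $ORE(\lambda)$ under $d(s)=d(f(s))$. By Lemma~\ref{lm50} the algebra $TLB_n^{\Lambda'}$ acts on $W(\lambda)$ through the surjection of Lemma~\ref{tlquot}, and I would define the $R$-linear bijection $\Phi: W(\lambda)\to W(\lambda')$ by $\Phi(C_s)=C_{f(s)}$.

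Next I would prove $\Phi$ is $TLB_n^{\Lambda'}$-equivariant. Since $TLB_n^{\Lambda'}$ is generated by the images of the $e(\mathbf i)$, the $\psi_i$ and the $y_r$, it suffices to show $r_a^{\lambda}(s',s)=r_a^{\lambda'}(f(s'),f(s))$ for $a$ one of these generators, where $r_a$ denotes the structure constants of the cell action. Taking $t=t^{\lambda}$, so that $C_{s,t^{\lambda}}^{\lambda}=\psi_{d(s)}^{*}e_{\lambda}$ (cf.\ Definition~\ref{Df28} and the proof of Lemma~\ref{34}), one has $a\,\psi_{d(s)}^{*}e_{\lambda}\equiv\sum_{s'}r_a^{\lambda}(s',s)\,\psi_{d(s')}^{*}e_{\lambda}$ modulo the two-sided ideal $TL_{r,1,n}^{\triangleleft\lambda}$. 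For $a=e(\mathbf i)$ this is immediate from $e(\mathbf i)\psi_{d(s)}^{*}e_{\lambda}=\delta_{\mathbf i,\,res^{\Lambda}(s)}\,\psi_{d(s)}^{*}e_{\lambda}$ together with $res^{\Lambda}(s)=res^{\Lambda'}(f(s))$. For $a=\psi_i$ one has $\psi_i\,\psi_{d(s)}^{*}e_{\lambda}=\psi_{U}^{*}e_{\lambda}$ for a word $U$ of length $l(d(s))+1$, and Lemma~\ref{algo} rewrites this as $\sum_{s'}c_U(s')\,\psi_{d(s')}^{*}e_{\lambda}+\sum_{i=1}^{n}Y_i+\sum_{g\in Gar(\lambda)}D_g$; by Lemma~\ref{lrh lm} and Lemma~\ref{42} the error terms $Y_i$ and $D_g$ lie in $TL_{r,1,n}^{\triangleleft\lambda}$, so $r_{\psi_i}^{\lambda}(s',s)=c_U(s')$. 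The case $a=y_r$ is handled the same way: one first moves $y_r$ to the right of $\psi_{d(s)}^{*}$ using~(\ref{21eq}),~(\ref{23eq}),~(\ref{24eq}) (as inside the proof of Lemma~\ref{algo}), after which $y_k e_{\lambda}\in TL_{r,1,n}^{\triangleleft\lambda}$ by Lemma~\ref{lrh lm} and the remaining shorter terms fall under the same reduction.

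The point that forces the two computations to agree --- the observation already used in the proof of Corollary~\ref{co47} --- is that the reduction in Lemma~\ref{algo} uses only the relations~(\ref{18eq})--(\ref{26eq}), and that every branch of the case analysis in~(\ref{25eq})--(\ref{26eq}), as well as any vanishing of an idempotent $e(\mathbf i)$ encountered along the way, is determined solely by the residue sequences of the terms produced. Since that reduction starts from $e_{\lambda}=e_{\lambda'}$, uses the identical permutations $d(f(s))=d(s)$ and the same index set $I=\mathbb{Z}/e\mathbb{Z}$, it runs term by term identically in $TL_{r,1,n}^{\Lambda}$ and in $TLB_n^{\Lambda'}$; hence $c_U(s')=c_U(f(s'))$, and likewise all $y_r$-coefficients match, i.e.\ $r_a^{\lambda}(s',s)=r_a^{\lambda'}(f(s'),f(s))$ for every generator $a$. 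Therefore $\Phi(a\cdot x)=a\cdot\Phi(x)$, so $\Phi$ is an isomorphism $W(\lambda)\simeq W(\lambda')$ of $TLB_n^{\Lambda'}$-modules. Finally, by Corollary~\ref{co47}, $\phi_{\lambda}(s,t)=\phi_{\lambda'}(f(s),f(t))$ for all $s,t$, so $\Phi$ maps $rad(\phi_{\lambda})$ isomorphically onto $rad(\phi_{\lambda'})$; passing to quotients gives $L(\lambda)=W(\lambda)/rad(\phi_{\lambda})\simeq W(\lambda')/rad(\phi_{\lambda'})=L(\lambda')$.

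I expect the only genuinely delicate step to be the residue-locality argument just sketched: one must be sure that no case of~(\ref{25eq})--(\ref{26eq}) and no vanishing of an $e(\mathbf i)$ is triggered differently in $TL_{r,1,n}^{\Lambda}$ than in $TLB_n^{\Lambda'}$, which is exactly what $res^{\Lambda'}\circ f=res^{\Lambda}$ --- forced by the choice $\Lambda'=\Lambda_{i_u}+\Lambda_{i_v}$ --- guarantees. Everything else (the bijection $f$ and its compatibility with standardness and with the permutations $d(s)$, and the behaviour of the radical) is formal bookkeeping.
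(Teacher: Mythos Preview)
Your proof is correct and follows essentially the same route as the paper's: define the bijection $f$ on standard tableaux, check that the structure constants $r_a(s',s)$ for the three kinds of generators $e(\mathbf i)$, $\psi_i$, $y_r$ coincide via Lemma~\ref{algo} (and the commutation relations (\ref{21eq}), (\ref{23eq}), (\ref{24eq}) for $y_r$), then invoke Corollary~\ref{co47} for the simple modules. Your write-up is in fact a bit more explicit than the paper's in recording why the reduction of Lemma~\ref{algo} runs identically in both algebras (the residue-locality observation $res^{\Lambda'}\circ f=res^{\Lambda}$, $d(f(s))=d(s)$, $e_{\lambda'}=e_{\lambda}$), which is exactly the content the paper leaves implicit in its remark that the transformation depends only on the relations (\ref{18eq})--(\ref{26eq}).
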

\begin{proof} 
	We first prove $W(\lambda')\simeq W(\lambda)$. Let $f: Std(\lambda)\to Std(\lambda')$ be the natural bijection and $F:W(\lambda)\to W(\lambda')$ be the induced map. We will prove that $F$ is an isomorphism between the two $TLB_n^{\Lambda'}$-modules. Since $F$ is an isomorphism of vector spaces, it is enough to show that $F$ is a module homomorphism, which is to show $F(aC_t)=aF(C_t)$ for all $a$ in the generating set of $TLB_n^{\Lambda'}$ and $t\in Std(\lambda)$. Let $r_a(s,t)$ and $r'_a(f(s),f(t))$ be the coefficients in the equation (\ref{55}) corresponding to $TL_{r,1,n}^{\Lambda}$ and $TLB_n^{\Lambda'}$ respectively. We next show that $r_a(s,t)=r'_a(f(s),f(t))$ for all $a$ in the generating set of $TLB_n^{\Lambda'}$ and deduce that $F$ is a module homomorphism.
	
	If $a=e(\mathbf{i})$, we have
	\begin{equation*}
		r_a(s,t)=r'_a(f(s),f(t))=
		\begin{cases}
			1 &\text{ if } s=t \text{ and } a=e(t)\\
			0 & \text{ otherwise.}
		\end{cases}
	\end{equation*}
	
	If $a=\psi_i$, for $t\in Std(\lambda)$, denote by $U^{(k)}$ be the sequence such that $\psi_{U_{i,t}^{(k)}}^*= \psi_i \psi_{d(t)}^*$. Then by Lemma $\ref{algo}$, $\psi_i \psi_{d(t)}^*e_{\lambda}$ and $\psi_i \psi_{d(f(t))}^*e_{\lambda'}$ can be written in the form of $(\ref{89})$ with
	\begin{equation*}
		c_{i,t}(s)=c_{i,f(t)}(f(s)).
	\end{equation*}
	So we have
	\begin{equation*}
		r_a(s,t)=c_{i,t}(s)=c_{i,f(t)}(f(s))=r'_a(f(s),f(t)).
	\end{equation*}
	
	If $a=y_i$, for $t\in Std(\lambda)$, by (\ref{21eq}),(\ref{23eq}) and (\ref{24eq}), we have
	\begin{equation}\label{ydecom}
		y_i\psi_{d(t)}^*e_{\lambda}= \psi_{d(t)}^*y_je_{\lambda}+\sum_{l(V)<l({d(t)})}c(V)\psi_{V}^*e_{\lambda},
	\end{equation}
	where $c(V)$ is 1 or -1.
	Then by Lemma $\ref{algo}$, $y_i \psi_{d(t)}^*e_{\lambda}$ and $y_i \psi_{d(f(t))}^*e_{\lambda'}$ can be written in the form of $(\ref{89})$ with the same coefficients. So we have 
	\begin{equation*}
		r_a(s,t)=c_{y_i,t}(s)=c_{y_i,f(t)}(f(s))=r'_a(f(s),f(t)).
	\end{equation*}
	
	To summarise, $r_a(s,t)=r'_a(f(s),f(t))$ for all $a$ in the generating set of $TLB_n^{\Lambda'}$. Thus we have $F(aC_t)=aF(C_t)$ which implies $F$ is an isomorphism between the two $TLB_n^{\Lambda'}$-modules. 
	
	By Corollary $\ref{co47}$, $\phi_{\lambda}(s,t)=\phi_{\lambda'}(f(s),f(t))$. Thus $f$ induces an isomorphism between the two $TLB_n^{\Lambda'}$-modules $L(\lambda')$ and $L(\lambda)$ as well. 
\end{proof}

The following theorem shows the connection between the decomposition numbers for a generalised Temperley-Lieb algebra and those for a blob algebra.

\begin{thm} $\label{thm53}$
	Let $TL_{r,1,n}^{\Lambda}$ be a generalised Temperley-Lieb algebra (cf. Theorem \ref{dftl}) and  $\lambda,\mu \in \mathfrak{B}_n^{(r)}-\mathfrak{D}_n^{(r)}$ be two multipartitions of which all the components except for the $u^{th}$ and $v^{th}$  are empty. Let $TLB_n^{\Lambda'}$ and $\lambda',\mu'$ be as defined before Corollary \ref{co47}. We have $[W(\lambda),L(\mu)]=[W(\lambda'),L(\mu')]_{TLB}$ where the left hand side is the decomposition number for $TL_{r,1,n}^{\Lambda}$ and the right hand side is that for $TLB_n^{\Lambda'}$.
\end{thm}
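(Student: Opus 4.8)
The plan is to deduce the theorem from Lemmas \ref{lm50}, \ref{lm51} and \ref{lm52} by a purely module-theoretic argument. Using Lemma \ref{tlquot} we identify $TLB_n^{\Lambda'}$ with the quotient $TL_{r,1,n}^{\Lambda}/\mathcal{I}$, where $\mathcal{I}$ is the two-sided ideal generated by the idempotents $e(\mathbf{i})$ with $(\Lambda',\alpha_{\mathbf{i}_1})=0$. The key observation is that $W(\lambda)$, and hence every one of its composition factors as a $TL_{r,1,n}^{\Lambda}$-module, is annihilated by $\mathcal{I}$, so that the composition series of $W(\lambda)$ is the same whether one views $W(\lambda)$ as a module over $TL_{r,1,n}^{\Lambda}$ or over $TLB_n^{\Lambda'}$; transporting along the isomorphisms of Lemma \ref{lm52} then gives the claimed equality.

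First I would record the general ring-theoretic fact: if $B=A/\mathcal{I}$ and $M$ is a finite-dimensional $A$-module with $\mathcal{I}M=0$, then $\mathcal{I}$ annihilates every $A$-subquotient of $M$, the lattice of $A$-submodules of $M$ coincides with the lattice of $B$-submodules, each simple $A$-subquotient of $M$ is the inflation of a simple $B$-module, and consequently $[M:S]_A=[M:\overline{S}]_B$ for every simple $A$-module $S$ with $\mathcal{I}S=0$, where $\overline{S}$ denotes $S$ regarded as a $B$-module. Applying this with $A=TL_{r,1,n}^{\Lambda}$, $B=TLB_n^{\Lambda'}$, $M=W(\lambda)$ and $S=L(\mu)$ — the hypotheses $\mathcal{I}W(\lambda)=0$ and $\mathcal{I}L(\mu)=0$ being exactly Lemmas \ref{lm50} and \ref{lm51} — yields $[W(\lambda),L(\mu)]=[W(\lambda),\overline{L(\mu)}]_{TLB}$, the right-hand side being the composition multiplicity computed for the $TLB_n^{\Lambda'}$-module structures.

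Then I would invoke Lemma \ref{lm52}, which identifies the $TLB_n^{\Lambda'}$-modules $W(\lambda)$ and $L(\mu)$ with the cell module $W(\lambda')$ and the simple module $L(\mu')$ of $TLB_n^{\Lambda'}$ attached to the bipartitions $\lambda'$ and $\mu'$ defined before Corollary \ref{co47}; substituting these isomorphisms into the previous identity gives $[W(\lambda),L(\mu)]=[W(\lambda'),L(\mu')]_{TLB}$, as required. (Should $\lambda$ and $\mu$ have their non-empty components in different pairs of positions, orthogonality of the $e(\mathbf{i})$ forces $C^{\lambda}_{s,t}C^{\mu}_{s',t'}=0$ for all cellular basis elements, so $\lambda$ and $\mu$ lie in different blocks and both decomposition numbers vanish; but the hypothesis already places both multipartitions in the pair $\{u,v\}$, so this situation does not occur.)

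The argument is essentially bookkeeping, the substantive content residing in Lemmas \ref{lm50}--\ref{lm52}, which in turn rest on the weight-independence of the straightening procedure of Lemma \ref{algo}. Accordingly, \emph{the only point that needs genuine care here} is the elementary module-theoretic step — in particular, checking that no composition factor of $W(\lambda)$ is a simple $TL_{r,1,n}^{\Lambda}$-module on which $\mathcal{I}$ acts nontrivially (immediate, since $\mathcal{I}$ annihilates $W(\lambda)$ and hence all of its subquotients), and that inflation along $TL_{r,1,n}^{\Lambda}\twoheadrightarrow TLB_n^{\Lambda'}$ carries $L(\mu')$ precisely to $L(\mu)$ and not to some other simple module, which is exactly what Lemma \ref{lm52} records.
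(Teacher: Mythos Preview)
Your proposal is correct and follows essentially the same route as the paper: use Lemma \ref{tlquot} to realise $TLB_n^{\Lambda'}$ as a quotient of $TL_{r,1,n}^{\Lambda}$, invoke Lemmas \ref{lm50}--\ref{lm51} to see that the kernel ideal kills $W(\lambda)$ and $L(\mu)$, and then transport along the isomorphisms of Lemma \ref{lm52}. Your formulation of the passage from $A$-modules to $A/\mathcal{I}$-modules via the lattice of submodules is in fact cleaner than the paper's, which phrases the step in terms of an equality of $\mathrm{Hom}$-spaces; your argument makes explicit why the full composition multiplicities (and not merely socle information) coincide.
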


\begin{proof}
	Lemma $\ref{lm52}$ shows that $L(\mu)$ is a simple $TLB_n^{\Lambda'}$-module and $[W(\lambda),L(\mu)]_{TLB}=[W(\lambda'),L(\mu')]_{TLB}$ as $TLB_n^{\Lambda'}$-modules. The proof of Lemma $\ref{lm50}$ shows that the kernel of the quotient map from $TL_{r,1,n}^{\Lambda}$ to $TLB_n^{\Lambda'}$ acts trivially on both $ W(\lambda') $ and $ L(\mu') $. So $Hom_{TL_{r,1,n}^{\Lambda}} (W(\lambda),L(\mu))=Hom_{TLB_n^{\Lambda'}} (W(\lambda),L(\mu))$. Therefore,  we have $[W(\lambda),L(\mu)]=[W(\lambda'),L(\mu')]_{TLB}$.
\end{proof}

The non-graded decomposition numbers of the Temperley-Lieb algebras of type $B_n$ are given by Martin and Woodcock in \cite{martin_woodcock_2003} and Graham and Lehrer in \cite{GRAHAM2003479} and the graded ones are given by Plaza in \cite{PLAZA2013182}. We give an interpretation of their result.

\begin{thm} ($cf.$ \cite{GRAHAM2003479},Theorem 10.16)
	Let $TLB_n^{\Lambda'}$ the Temperley-Lieb algebra of type $B_n$ corresponding to the dominant weight $\Lambda'$ (cf. Theorem \ref{dftl}) and $\lambda',\mu'$ be two bipartitions of $n$. Then the decomposition numbers for the cell module $W(\lambda')$ of $TLB_n^{\Lambda'}$,
	\begin{equation}
		[W(\lambda'),L(\mu')]_{TLB}=
		\begin{cases}
			1 &\text{ if } \lambda' \unlhd\mu' \text{ and there exists } t\in Std(\lambda') \text{ such that } e(t)=e_{\mu'}\\
			0 &\text{ otherwise,}
		\end{cases}
	\end{equation}
	where $e(t)$ and $e_{\mu'}$ are the KLR generators in Definition \ref{dfei}.
\end{thm}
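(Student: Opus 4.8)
The plan is to deduce the statement from the already-known decomposition numbers of $TLB_n^{\Lambda'}$, computed by Martin--Woodcock \cite{martin_woodcock_2003} and Graham--Lehrer \cite{GRAHAM2003479} (and refined gradedly by Plaza \cite{PLAZA2013182}), by translating their combinatorial answer into the residue/KLR language of this paper. Since $r=2$, we have $\mathfrak{D}_n^{(2)}=\emptyset$ and $\mathfrak{B}_n^{(2)}-\mathfrak{D}_n^{(2)}=\mathfrak{B}_n^{(2)}$ is the set of all one-column bipartitions of $n$; a bipartition $\lambda'=((1^{a}),(1^{n-a}))$ is recorded by the single integer $a$, and the parameters of $TLB_n^{\Lambda'}$ by $\Lambda'=\Lambda_{i_u}+\Lambda_{i_v}$ together with the quantum characteristic $e$. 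First I would write out the precise form of \cite[Theorem 10.16]{GRAHAM2003479} in the admissible parameter range forced by $(\ref{pmres})$--$(\ref{pmres1})$: there all decomposition numbers of $TLB_n^{\Lambda'}$ are $0$ or $1$, and the pairs $(\lambda',\mu')$ with $[W(\lambda'),L(\mu')]_{TLB}=1$ are exactly those linked by the alcove/reflection combinatorics of the relevant affine rank-one reflection group. The task is to identify that linkage relation with the two conditions ``$\lambda'\unlhd\mu'$ and $e(t)=e_{\mu'}$ for some $t\in Std(\lambda')$''.

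Second, I would verify these two conditions intrinsically, which both proves the ``only if'' direction and fixes the dictionary. For the residue (block) condition: by Lemma \ref{34} we have $\phi_{\mu'}(t^{\mu'},t^{\mu'})=1$, and one checks directly from $(\ref{76})$ that $e_{\mu'}C_{t^{\mu'}}=C_{t^{\mu'}}$ in $W(\mu')$; since $C_{t^{\mu'}}\notin\mathrm{rad}(\phi_{\mu'})$ this shows $e_{\mu'}$ does not annihilate $L(\mu')=W(\mu')/\mathrm{rad}(\phi_{\mu'})$, so if $[W(\lambda'),L(\mu')]_{TLB}\neq 0$ then $e_{\mu'}W(\lambda')\neq 0$, which by $(\ref{76})$ forces $e_{\mu'}=e(t)$ for some $t\in Std(\lambda')$. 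For the order condition: $TLB_n^{\Lambda'}$ is a graded cellular algebra by Theorem \ref{cb33}, so the general theory of \cite{Lehrer1996} gives $[W(\lambda'),L(\lambda')]_{TLB}=1$ and $[W(\lambda'),L(\mu')]_{TLB}\neq 0\Rightarrow\lambda'\unlhd\mu'$. Hence the set of pairs on the right-hand side of the claimed formula already contains all the non-zero ones; what remains is that every listed pair actually occurs and does so with multiplicity exactly $1$, which is precisely the content of \cite{GRAHAM2003479,martin_woodcock_2003}.

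Third, I would carry out the combinatorial matching. Using the residue formula $(\ref{eq75})$ and the total order on the nodes of a one-column bipartition, I would describe explicitly the set of bipartitions $\mu'$ of $n$ whose ground-state residue sequence $res^{\Lambda'}(t^{\mu'})$ (cf. $(\ref{eqres})$) is realised by some standard tableau of shape $\lambda'$: this turns out to be a condition on $a$, on $b$ (the size of the first component of $\mu'$) and on the residue $|i_u-i_v|$ modulo $e$, describing for fixed $a$ an interval of admissible $b$ of the shape one expects from an $\widehat{\mathfrak{sl}}_2$-crystal. I would then check that intersecting this set with $\{\mu':\lambda'\unlhd\mu'\}$ --- the dominance condition being read off via $(\ref{podf})$ as in Lemmas \ref{25}--\ref{27} --- reproduces precisely the linked pairs of \cite[Theorem 10.16]{GRAHAM2003479} after the charge/weight translation, using Plaza \cite{PLAZA2013182} as an additional cross-check on the grading shift and on the $0/1$-ness of the ungraded multiplicities.

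The hard part will be this last matching step: Graham and Lehrer phrase their answer through an action of a rank-one affine reflection group on a weight lattice, with its own conventions for alcoves, walls and the dominant chamber, whereas the answer here is phrased via the poset $(\mathfrak{B}_n^{(2)},\unlhd)$ and residue sequences of one-column tableaux, and making the two descriptions agree --- in particular the role of $e$, the reflection hyperplanes, and the direction of the order --- is where the real work lies. The multiplicity-freeness, the necessity of the block condition, and the cellular-algebra order constraint are all formal.
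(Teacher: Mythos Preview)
The paper does not give a proof of this theorem at all: it is stated as a reformulation of \cite[Theorem 10.16]{GRAHAM2003479}, prefaced by ``We give an interpretation of their result'', and is followed immediately by the next theorem with no intervening proof environment. In other words, the authors treat the translation between Graham--Lehrer's alcove combinatorics and the residue/KLR condition ``$\lambda'\unlhd\mu'$ and $e(t)=e_{\mu'}$ for some $t\in Std(\lambda')$'' as known (or at least as routine), citing \cite{martin_woodcock_2003}, \cite{GRAHAM2003479}, and \cite{PLAZA2013182} for the underlying decomposition numbers.

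Your proposal therefore does strictly more than the paper: you are supplying the argument the paper omits. Your outline is sound. The two formal observations --- that a nonzero decomposition number forces $e_{\mu'}W(\lambda')\neq 0$, hence $e_{\mu'}=e(t)$ for some $t\in Std(\lambda')$, and that cellularity forces $\lambda'\unlhd\mu'$ --- are correct and give the ``only if'' direction cleanly. You are right that the substantive content is the combinatorial matching in your third step, and your assessment that this is where the work lies is accurate; Plaza \cite{PLAZA2013182} already phrases the type-$B$ decomposition numbers in KLR/residue language, so in practice one can quote that directly rather than re-deriving the dictionary from \cite{GRAHAM2003479}.
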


We can now determine the decomposition numbers for the generalised Temperley-Lieb algebra $TL_{r,1,n}$.

\begin{thm} \label{decomtlr1n}
	Let $TL_{r,1,n}^{\Lambda}$ be the generalised Temperley-Lieb algebra (cf. Definition \ref{TL} and Theorem \ref{dftl}) over a field $R$ of characteristic $0$, defined by a dominant weight $\Lambda$ which satisfies $(\ref{reslmd})$. According to Theorem $\ref{cb33}$, it is a graded cellular algebra with the cell datum $(\mathfrak{B}_n^{(r)}-\mathfrak{D}_n^{(r)},*,Std,C,deg)$.  For $\lambda,\mu \in \mathfrak{B}_n^{(r)}-\mathfrak{D}_n^{(r)}$, let $W(\lambda)$ be the cell module and $L(\mu)$ be the simple of  $TL_{r,1,n}^{\Lambda}$ corresponding to $\lambda$ and $\mu$, respectively. Then the decomposition number
		\begin{equation}
		[W(\lambda),L(\mu)]=
		\begin{cases}
			1 &\text{ if } \lambda \unlhd\mu \text{ and there exists } t\in Std(\lambda) \text{ such that } e(t)=e_{\mu}\\
			0 &\text{ otherwise,}
		\end{cases}
	\end{equation}
		where $e(t)$ and $e_{\mu'}$ are the KLR generators in Definition \ref{dfei}.
\end{thm}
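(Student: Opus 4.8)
The plan is to bootstrap off Theorem~\ref{thm53}, which already identifies $[W(\lambda),L(\mu)]$ with a decomposition number of a type-$B_n$ Temperley--Lieb algebra whenever $\lambda$ and $\mu$ are supported on the same pair of components, together with the formula for $TLB_n^{\Lambda'}$ recalled just above (from \cite{GRAHAM2003479} and \cite{PLAZA2013182}). The only genuinely new work is then: (a) to show $[W(\lambda),L(\mu)]=0$ as soon as the nonempty components of $\mu$ are \emph{not} among those of $\lambda$; (b) to show that in this same situation the right-hand side of the claimed formula also vanishes; and (c) to translate the conditions "$\lambda'\unlhd\mu'$" and "$\exists\,t'\in Std(\lambda')$ with $e(t')=e_{\mu'}$" for the bipartitions $\lambda',\mu'$ back into the stated conditions on $\lambda,\mu$.

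For (a) I would argue via inflation. By Lemma~\ref{lm50}, $W(\lambda)$ is a module for the quotient $TLB_n^{\Lambda'}$ of $TL_{r,1,n}^{\Lambda}$, where $\Lambda'=\Lambda_{i_u}+\Lambda_{i_v}$ is built from the (at most two) nonempty components $u,v$ of $\lambda$ (Lemma~\ref{tlquot}); hence every composition factor of $W(\lambda)$ is the inflation, along $TL_{r,1,n}^{\Lambda}\twoheadrightarrow TLB_n^{\Lambda'}$, of a simple $TLB_n^{\Lambda'}$-module, and by Lemma~\ref{lm52} these inflations are precisely the $L(\nu)$ with $\nu$ supported on $\{u,v\}$. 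So $L(\mu)$ cannot occur in $W(\lambda)$ unless $\mu$ is supported on $\{u,v\}$. Alternatively one runs the Grothendieck-group identity $[W(\lambda)]=[W(\lambda')]=\sum_{\nu'}[W(\lambda'),L(\nu')]_{TLB}\,[L(\nu)]$ and compares coefficients against $[W(\lambda)]=\sum_{\mu}[W(\lambda),L(\mu)]\,[L(\mu)]$, which simultaneously recovers Theorem~\ref{thm53} and (a).

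For (b) and (c) the key is a short argument about the first two entries of residue sequences. Write $\lambda$ on components $k_1<k_2$ and suppose $t\in Std(\lambda)$ satisfies $e(t)=e_{\mu}$, so $res^{\Lambda}(t)=res^{\Lambda}(t^{\mu})$. Position $1$ of $res^{\Lambda}(t)$ is $i_{k_1}$ or $i_{k_2}$ (the residues of the only nodes that can carry the number $1$ in a $\lambda$-tableau), while position $1$ of $res^{\Lambda}(t^{\mu})$ is $i_{l_1}$, where $l_1$ is the smallest component of $\mu$; since the $i_a$ are pairwise distinct, indeed $|i_a-i_b|>2$, by (\ref{reslmd}), this forces $l_1\in\{k_1,k_2\}$. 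If $\mu$ has a second component $l_2>l_1$, then position $2$ of $res^{\Lambda}(t^{\mu})$ is $i_{l_2}$, whereas position $2$ of $res^{\Lambda}(t)$ is $i_{k_1}-1$, $i_{k_2}-1$, $i_{k_1}$ or $i_{k_2}$; the values $i_{k_1}-1,i_{k_2}-1$ are excluded by $|i_a-i_b|>2$, and distinguishing the cases $l_1=k_1$ and $l_1=k_2$ one finds in the first case $i_{l_2}=i_{k_2}$ (so $l_2=k_2$) and in the second case $i_{l_2}=i_{k_1}$, forcing $l_2=k_1<l_1$, a contradiction. Hence $e(t)=e_{\mu}$ for some $t\in Std(\lambda)$ already forces the components of $\mu$ to lie among those of $\lambda$; contrapositively this proves (b), and reduces everything else to the same-support case, where Theorem~\ref{thm53} applies. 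For (c), in that case the natural bijection $f\colon Std(\lambda)\to Std(\lambda')$ carries residue sequences computed with $\Lambda$ to residue sequences computed with $\Lambda'=\Lambda_{i_u}+\Lambda_{i_v}$ (a node $(a,1,u)$ has residue $1-a+i_u$ in both settings), so $e(t)=e_{\mu}\iff e(f(t))=e_{\mu'}$; and since $\unlhd$ on one-column multipartitions supported on a fixed pair of components depends only on the shapes of those two components, Lemmas~\ref{25}, \ref{26} and \ref{27} give $\lambda\unlhd\mu\iff\lambda'\unlhd\mu'$. Assembling (a), (b), (c) with the cited $TLB_n$ formula yields the stated formula.

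I expect the main obstacle to be part (a): namely pinning down precisely which simple $TL_{r,1,n}^{\Lambda}$-modules arise as inflations from $TLB_n^{\Lambda'}$, and checking that inflating a complete set of non-isomorphic irreducibles keeps it complete and non-isomorphic, together with making the identification of the two partial orders in (c) fully precise. By contrast the residue-sequence argument in (b) is elementary once one looks only at positions $1$ and $2$, and the remaining content is routine bookkeeping through Theorem~\ref{thm53} and the bijection $f$.
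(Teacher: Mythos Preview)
Your proposal is correct and follows essentially the same route as the paper: reduce the same-support case to Theorem~\ref{thm53} together with the cited $TLB_n$ formula, and handle (b) by inspecting the first two entries of the residue sequence (the paper does exactly this, only more tersely: ``by comparing the first two indices in $e(t)$''). For (a) the paper argues by dimension counting---the composition factors already found in the same-support case exhaust $\dim W(\lambda)$---which is precisely your Grothendieck-group alternative; your inflation argument via Lemmas~\ref{tlquot}, \ref{lm50} and \ref{lm52} is a cleaner structural variant of the same point. The translation step (c) is left implicit in the paper but is, as you say, routine once one notes that residues of nodes in the two nonempty components are computed identically for $\Lambda$ and $\Lambda'$.
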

\begin{proof}
	If the set of indices of non-empty components of $\mu$ is a subset of that of $\lambda$, Theorem $\ref{thm53}$ implies that $[W(\lambda),L(\mu)]=1$ if $ \lambda \unlhd\mu \text{ and there exists } t\in Std(\lambda) $ such that $ e(t)=e_{\mu}$. By comparing the dimensions, we can see that these 1's are the only non-zero decomposition numbers for $TL_{r,1,n}^{\Lambda}$. If the indices of non-empty components of $\mu$ are not covered by those of $\lambda$, by comparing the first two indices in $e(t)$, we notice there is no standard tableau $t$ of shape $\lambda$ such that $e(t)=e_{\mu}$.
\end{proof}

It should be remarked that the former theorem shows that for any $\lambda,\mu \in \mathfrak{B}_n^{(r)}-\mathfrak{D}_n^{(r)}$, if the indices of non-empty components of $\mu$ are not covered by those of $\lambda$, then $[W(\lambda),L(\mu)]=0$. Further, we have the following result:

\begin{cor}
	Let $\lambda_{k_1,k_2},\mu_{l_1,l_2}\in \mathfrak{B}_n^{(r)}-\mathfrak{D}_n^{(r)}$ be two one-column multipartitions of $n$ such that the $k_1$ and $k_2$ components in $\lambda_{k_1,k_2}$ and $l_1$ and $l_2$ components in $\mu_{l_1,l_2}$ are non-empty. If $\{k_1,k_2\} \cap \{l_1,l_2\} =\emptyset$, then
	\begin{equation*}
		Hom(W(\lambda_{k_1,k_2}),W(\mu_{l_1,l_2}))=0.
	\end{equation*}
\end{cor}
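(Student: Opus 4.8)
The plan is to argue directly with the KLR idempotents $e(\mathbf{i})$, mirroring the computation already carried out in the proof of Lemma \ref{lm50}. Write $\lambda = \lambda_{k_1,k_2}$ and $\mu = \mu_{l_1,l_2}$, and let $\Lambda = \Lambda_{i_1} + \dots + \Lambda_{i_r}$, so that the $i_a$ are pairwise distinct by $(\ref{reslmd})$. The key observation is that the $e(\mathbf{i})$ are pairwise orthogonal idempotents with $\sum_{\mathbf{i}} e(\mathbf{i}) = 1$ in $TL_{r,1,n}^{\Lambda}$ (this descends from the relation $\sum_{\mathbf{i}} e(\mathbf{i}) = 1$ in $\mathcal{R}_n^{\Lambda}$), and that any $TL_{r,1,n}^{\Lambda}$-module homomorphism commutes with each $e(\mathbf{i})$.

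First I would record the support of each cell module under this decomposition. Exactly as in the proof of Lemma \ref{lm50}, for $s\in Std(\lambda)$ one has $C_{s,t}^{\lambda}=e(s)\psi_{d(s)}^*\psi_{d(t)}$, hence in the cell module $W(\lambda)$ one has $e(\mathbf{i})C_s=\delta_{\mathbf{i},res^{\Lambda}(s)}C_s$. Therefore $e(\mathbf{i})W(\lambda)\neq 0$ only when $\mathbf{i}=res^{\Lambda}(s)$ for some standard $\lambda$-tableau $s$; for such $s$ the cell carrying the entry $1$ must lie in the first row of one of the two non-empty components of $\lambda$ (it can have no cell above it or, since $\lambda$ is one-column, to its left), so by $(\ref{eq75})$ the first entry of $res^{\Lambda}(s)$ equals $i_{k_1}$ or $i_{k_2}$. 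Thus $e(\mathbf{i})W(\lambda)=0$ unless $\mathbf{i}_1\in\{i_{k_1},i_{k_2}\}$, and symmetrically $e(\mathbf{i})W(\mu)=0$ unless $\mathbf{i}_1\in\{i_{l_1},i_{l_2}\}$.

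Then I would combine these facts. Given a homomorphism $f\colon W(\lambda)\to W(\mu)$, we have $f(e(\mathbf{i})W(\lambda))\subseteq e(\mathbf{i})W(\mu)$ for every $\mathbf{i}$; and since $1=\sum_{\mathbf{i}}e(\mathbf{i})$ we get $W(\lambda)=\sum_{\mathbf{i}_1\in\{i_{k_1},i_{k_2}\}}e(\mathbf{i})W(\lambda)$. But $\{k_1,k_2\}\cap\{l_1,l_2\}=\emptyset$ together with the distinctness of the $i_a$ forces $\mathbf{i}_1\notin\{i_{l_1},i_{l_2}\}$ for each such $\mathbf{i}$, whence $e(\mathbf{i})W(\mu)=0$; therefore $f(W(\lambda))=0$ and $\mathrm{Hom}(W(\lambda),W(\mu))=0$. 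An equivalent route is to observe that a nonzero $f$ would make $L(\lambda)$ — the simple head of $W(\lambda)$, which exists because $\phi_{\lambda}\neq 0$ by Lemma \ref{34} — a composition factor of $W(\mu)$, i.e. $[W(\mu),L(\lambda)]\neq 0$; this contradicts Theorem \ref{decomtlr1n} and the remark following it, since the index set $\{k_1,k_2\}$ of the non-empty components of $\lambda$ is not contained in that of $\mu$.

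There is essentially no genuine obstacle here; the only point requiring care is the bookkeeping for the first entry of the residue sequence of a standard tableau of a two-component one-column multipartition, and confirming that $\sum_{\mathbf{i}}e(\mathbf{i})=1$ survives in the quotient $TL_{r,1,n}^{\Lambda}$ of $\mathcal{R}_n^{\Lambda}$.
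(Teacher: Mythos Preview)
Your proof is correct. Your primary argument via the orthogonal idempotents $e(\mathbf{i})$ is genuinely different from, and more elementary than, the paper's own proof: the paper argues indirectly through the decomposition numbers of Theorem~\ref{decomtlr1n}, observing that $[W(\lambda),L(\nu)]\cdot[W(\mu),L(\nu)]=0$ for every $\nu$ and then deducing that a nonzero homomorphism would force a common composition factor. Your direct argument bypasses the decomposition theorem entirely and works purely from the residue-sequence support of the cell modules, which is a cleaner and more self-contained route; the trade-off is that the paper's argument is positioned to exploit the heavier machinery already developed. The alternative you sketch at the end---that a nonzero $f$ would force $L(\lambda)$, the simple head of $W(\lambda)$, to occur in $W(\mu)$---is essentially a streamlined version of the paper's own proof.
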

\begin{proof}
	For any $\nu\in\mathfrak{B}_n^{(r)}-\mathfrak{D}_n^{(r)}$, as $\{k_1,k_2\} \cap \{l_1,l_2\} =\emptyset$, the theorem above implies 
	\begin{equation}\label{eq96}
		[W(\lambda_{k_1,k_2}),L(\nu)]\times[W(\mu_{l_1,l_2}),L(\nu)]=0.
	\end{equation}
    If $Hom(W(\lambda_{k_1,k_2}),W(\mu_{l_1,l_2}))\neq 0$, then $Hom(W(\lambda_{k_1,k_2}),L(\nu))\neq 0$ for some $\nu$ such that $[W(\mu_{l_1,l_2}),L(\nu)]\neq 0$. On the other hand, $Hom(W(\lambda_{k_1,k_2}),L(\nu))\neq 0$ implies that $[W(\lambda_{k_1,k_2}),L(\nu)]\neq 0$, which is contradictory to $(\ref{eq96})$.
\end{proof}

\section{Further problems}

One of the most important properties of the Temperley-Lieb algebras is that they may be described in terms of planar diagrams.
In \cite{Kauffman1990AnIO} Kauffman shows that the Temperley-Lieb algebra $TL_n(q)$ has a basis consisting of planar $(n,n)$-diagrams. In \cite{Martin_1994}, Martin and Saleur define the Temperley-Lieb algebra of type $B_n$ as an associative algebra with blob $(n,n)$-diagrams as a basis.

It is still an open question how to find a diagrammatic description of our generalised Temperley-Lieb algebra $TL_{r,1,n}$. Graham and Lehrer reveal the connection between cellular bases and diagrammatic bases of Temperley-Lieb algebras of type $B_n$ and affine Temperley-Lieb algebras in \cite{GRAHAM2003479}.
Theorem \ref{cb33} gives us a cellular basis of our $TL_{r,1,n}$ with respect to the KLR generators. Although there is natural grading with this basis, a direct connection to blob diagrams is still not clear.

Another question is whether there is a further generalisation of the Temperley-Lieb algebras corresponding to all unitary reflection groups. We have made a significant progress in this direction. For any imprimitive complex reflection group $G(r,p,n)$, the corresponding Temperley-Lieb algebra $TL_{r,p,n}$ can be defined in a similar way.  This work will be described in an upcoming work.

%\section{Acknowledgements}
%This paper is part of my PhD Thesis. I am deeply indebted to my supervisor, Prof. Gus Lehrer, for his insightful guide and suggestions to improve the draft. The idea of generalising the Temperley-Lieb algebra is credited to his foresight.

\bibliography{refs1}
\bibliographystyle{siam}
\end{document}